\documentclass[11pt,oneside]{amsart}
\usepackage[utf8]{inputenc}
\usepackage[english,francais]{babel}
\usepackage[centertags]{amsmath}
\usepackage{amsmath}
\usepackage{amssymb}
\usepackage{mathrsfs}
\newcommand \A[1]{{\bf (#1)}}
\def\leftB{[\![}
\def\rightB{]\!]}

\usepackage{enumitem}
\usepackage{graphicx}
\usepackage{wasysym}
\usepackage{dsfont}
\newcommand{\Tr}{{{\rm Tr}}}

\newcommand{\bb}{\mathbb}


\newcommand\R{\mathbb{R}}
\newcommand\Z{\mathbb{Z}}
\newcommand\N{\mathbb{N}}

\newcommand\F{\mathcal{F}}




\DeclareMathSymbol{\leqslant}{\mathrel}{AMSa}{"36}
\DeclareMathSymbol{\geqslant}{\mathrel}{AMSa}{"3E}
\def \P{{\bb P}}

\def \R{{\bb R}}

\def \E{{\bb E}}

\def\leftB{[\![}
\def\rightB{]\!]}

\def\G{\mathcal{G}}
\def\F{\mathcal{F}}

\def \ra{\rightarrow}

\newcounter{definition}[section]
\newcommand{\mysection}{\setcounter{equation}{0} \section}

\newtheorem{lemme}{Lemma}
\newtheorem{theo}{Theorem}
\newtheorem{remark}{Remark}
\newtheorem{corol}{Corollary}

\DeclareMathOperator*{\argmax}{arg\,max}
\DeclareMathOperator*{\argmin}{arg\,min}

\usepackage{geometry}
\geometry{hmargin=2.5cm,vmargin=2.5cm}
\usepackage{color}

\begin{document}

\author{I. HONORÉ} 
\address{Universit\'e d'Evry Val d'Essonne}
\email{igor.honore@univ-evry.fr} 
\curraddr{Laboratoire de Mathématiques et Modélisation d'Évry (LaMME), 23 Boulevard de France, 91037, Evry, France. }

\title[Sharp non-asymptotic Concentration for Ergodic Approximations]{Sharp non-asymptotic Concentration Inequalities for the Approximation of the Invariant Measure of a Diffusion}

\selectlanguage{english}
\begin{abstract}
For an ergodic Brownian diffusion with invariant measure $\nu$, we consider a sequence of empirical distributions $(\nu_n)_{n \geq 1}$ associated with an approximation scheme with decreasing time step $(\gamma_n)_{n \geq 1}$ along an adapted regular enough class of test functions $f$ such that $f-\nu(f)$ is a coboundary of the infinitesimal generator $\mathcal{A}$. 
Denote by $\sigma$ the diffusion coefficient and $\varphi$ the solution of the Poisson equation $\mathcal{A}\varphi= f- \nu(f)$.
When the square norm $|\sigma^*  \nabla \varphi|^2$ lies in the same coboundary class as $f$, we establish sharp non-asymptotic concentration bounds for suitable normalizations of $\nu_n(f)-\nu(f)$. 
Our bounds are optimal in the sense that they match the asymptotic limit obtained by Lamberton and Pagès in \cite{lamb:page:02}, 
for a certain large deviation regime. In particular, this allows us to derive sharp non-asymptotic confidence intervals.
Eventually, we are able to handle, up to an additional constraint on the time steps, Lipschitz sources $f$ in an appropriate non-degenerate setting.
\end{abstract}

\keywords{Invariant distribution, diffusion processes, inhomogeneous Markov chains, sharp non-asymptotic concentration.}

\date{\today}

\maketitle
\mysection{Introduction}
\setcounter{equation}{0}
\subsection{Statement of the problem}

Consider the stochastic differential equation
\begin{equation}
\label{eq_diff}
dY_t = b(Y_t) dt + \sigma(Y_t) dW_t,
\end{equation}
where $(W_t)_{t\ge 0}$ stands for a Wiener process of dimension $r\in \N$ on a given filtered probability space $(\Omega,\G,(\G_t)_{t\ge 0},\P) $, $b : \R^d \rightarrow \R^d,$ and $ \sigma : \R^d \rightarrow \R^d \otimes \R^r $ are Lipschitz continuous functions and satisfy a Lyapunov condition  (see further Assumption $\mathbf{\mathcal{L}_V}$) which provides the existence of an invariant measure $\nu$. 
Throughout the article, uniqueness of the invariant measure $\nu$ is assumed.
The purpose of this work is to estimate the invariant measure of the diffusion equation \eqref{eq_diff}.

In order to make a clear parallel with the objects we will introduce for the approximation of $\nu$, let us first recall some basic facts on $(Y_t)_{t\ge 0} $ and $\nu$.

Introduce for a bounded continuous function $f$ and $t \in \R_+$ the average occupation measure:
\begin{equation}\label{nu_t}
\nu_t(f) := \frac 1t \int_0^t f(Y_s)ds.
\end{equation}
Foremost, bear in mind the \textit{usual} ergodic theorem which holds under appropriate Lyapunov conditions (see e.g. \cite{khasminskii2011stochastic}):
\begin{equation}\label{Bhat_ergo}
\nu_t(f) \overset{{a.s.}}{\underset{t \to + \infty}{\longrightarrow}} \nu(f):=\int f d\nu.
\end{equation}
Under suitable stability and regularity conditions, Bhattacharya \cite{bhat:82} then established a corresponding Central Limit Theorem (CLT). Namely,  for all smooth enough function $f$, 
\begin{equation}\label{Bhat}
\sqrt t \big(\nu_t(f)-\nu(f)\big) \overset{{\mathcal L}}{\underset{t \to + \infty}{\longrightarrow}} {\mathcal N} \left (0,\int_{\R^d} |\sigma^*\nabla \varphi(x)|^2\nu(dx) \right),
\end{equation}
where $\varphi$ is the solution of the Poisson equation $\mathcal{A} \varphi= f-\nu(f)$ and $\mathcal{A}$ stands for the infinitesimal operator of the diffusion \eqref{eq_diff} (see \eqref{A_phi} below for more details). In the following, we say that $f$ is \textbf{coboundary} when there is a smooth solution $\varphi$ to the Poisson equation $\mathcal{A} \varphi= f-\nu(f)$. 

Identity \eqref{Bhat} is a Central Limit Theorem (CLT) whose asymptotic variance is the integral of the well known \textbf{carré du champ} (called energy but we will say, from now on, by abuse of terminology, \textit{carré du champ}),
for more precision, see \cite{bakr:gent:ledo:14} and \cite{led:99}. The \textit{carré du champ} is actually a bilinear operator defined for any smooth functions $\varphi,  \psi$ by  $ \mathbf{\Gamma} (\varphi,  \psi)  := \mathcal{A} (\varphi \cdot \psi) -   \mathcal{A} \varphi \cdot  \psi -    \varphi \cdot  \mathcal {A} \psi $, and so:
\begin{equation*}
\nu \left (\mathbf{\Gamma} \left (\varphi,\varphi \right) \right )= \int_{\R^d} \mathbf{\Gamma} (\varphi, \varphi ) \nu(dx) = -2  \int_{\R^d} \mathcal A \varphi \cdot  \varphi \ \nu(dx) =
\int_{\R^d} |\sigma^*  \nabla \varphi |^2 \nu(dx).
\end{equation*}  
Indeed, observe that $\nu \left (\mathcal A \left(\varphi  \cdot \psi \right) \right)=0$. This is a consequence of the fact that $\nu$ solves in the distributional sense the Fokker-Planck equation $\mathcal A^* \nu=0$.
Also, this observation yields that, in order to bypass solving a Poisson equation, 
a common trick consists in dealing with smooth functions of the form $\mathcal A \varphi$.

From a practical point of view, several questions appear: how to approach the process $(Y_t)_{t \geq 0}$, the integral $\nu_t$, and the deviation from the asymptotic measure appearing in \eqref{Bhat}? 
Here, the first question is addressed by considering a suitable discretization scheme with decreasing time steps, $(\gamma_k)_{k\geq 1}$. 
The integral $\nu_t$ can then be approximated by the associated empirical measure, whose deviations will be controlled in our main results.
In particular, we take advantage of the discrete analogue to \eqref{Bhat} established by Lamberton and Pag\`es in \cite{lamb:page:02} for the current approximation scheme, to derive sharp non-asymptotic bounds for the empirical measure.

We propose an approximation algorithm based on an Euler like discretization with decreasing time step, first introduced by Lamberton and Pagès in \cite{lamb:page:02} who derived related asymptotic limit theorem in the spirit of \eqref{Bhat}, and exploited as well in \cite{hon:men:pag:16} where some corresponding non-asymptotic bounds are obtained.

For the decreasing step sequence $(\gamma_k)_{k \geq 1} $ and $n\ge 0$, the scheme deriving from \eqref{eq_diff} is defined by:
\begin{equation}\label{scheme}\tag{S}
\left\{
\begin{array}{ll}
&X_{n+1} = X_n + \gamma_{n+1} b(X_n) + \sqrt{\gamma_{n+1}} \sigma(X_n) U_{n+1}, \\
&X_0 \in L^2(\Omega,\F_0,\P),
\end{array}
\right.
\end{equation}
where $(U_n)_{n \geq 1}$ is an i.i.d. sequence of random variables on $\R^r$, independent of $X_0$, and whose moments match with the Gaussian ones up to order $3$. In particular, more general innovations than the Brownian increments can be used.

 Intuitively, the decreasing steps in \eqref{scheme} allow to be more and more precise when time grows.

The empirical (random) occupation measure of the scheme is defined for all $A\in {\mathcal B}(\R^d) $ (where ${\mathcal B}(\R^d)$ denotes the Borel $\sigma $-field on $\R^d$) by:
\begin{equation}\label{measure}
\nu_n(A) :=\nu_n(\omega,A) :=\frac{\sum_{k=1}^n \gamma_k \delta_{X_{k-1}(\omega)}(A)}{\sum_{k=1}^n \gamma_k}.
\end{equation}
We are interested in the long time approximation, so we need to consider steps $ (\gamma_k)_{k \geq 1}$ such that $\Gamma_n:=\sum_{k=1}^n \gamma_k \underset{n}{\rightarrow} + \infty $.

Under suitable Lyapunov like assumptions, Lamberton and Pagès in~\cite{lamb:page:02} first proved the following ergodic result: for any $\nu -a.s.$ continuous function $f$ with polynomial growth, $\nu_n(f) \overset{a.s.}{\underset{n}{\longrightarrow}} \nu(f)=\int_{\R^d}f(x) \nu (dx)$, which is the discrete analogue of \eqref{Bhat_ergo}.

The main benefit of decreasing steps instead of constant ones is thus that the empirical measure directly converges towards the invariant one. 
Otherwise, taking $\gamma_k=h>0$ in~\eqref{scheme}, the previous ergodic theorem must be changed into: $\nu_n(f) \overset{a.s.}{\underset{n}{\longrightarrow}} \nu^h(f)=\int_{\R^d}f(x) \nu^h (dx)$,
where $\nu^h $ is the invariant measure of the scheme. 
So, an extra study must be carried out, namely the difference $\nu-\nu^h $ should be estimated. For more details about this approach we refer to \cite {tala:tuba:90}, \cite{tala:02} and the work of Malrieu and Talay \cite{MalrieuTalay}. This work first addressed the issue of deriving non-asymptotic controls for the deviations of empirical measure of type \eqref{measure} when $\gamma_k=\gamma>0 $ (constant step). 
The backbone of their approach consisted in establishing a Log Sobolev inequality, which implies Gaussian concentration, for the Euler scheme.
In whole generality, functional inequalities (such as the Log Sobolev one) are a powerful tools to get simple controls on the invariant distribution associated with the diffusion process~\eqref{eq_diff}, see e.g. Ledoux~\cite{led:99} or Bakry \textit{et al.}~\cite{bakr:gent:ledo:14}.
 Withal Log Sobolev, and Poincaré inequalities turn out to be quite  \textit{rigid} in the framework of discretization schemes like~\eqref{scheme} with or without decreasing steps.

For the CLT associated with stationary Markov chains, we refer to Gordin's Theorem (see \cite{gor:lif:78}). Note as well that the variance of the limit Gaussian law is also the  \textit{carré du champ} for discrete Poisson equation associated with the generator of the chain.

 Let us mention as well some related works.  In \cite{blow:boll:06}, Blower and Bolley establish Gaussian concentration properties for deviations of functional of the path in the case of metric space valued homogeneous Markov chains.
Non-asymptotic deviation bounds for the Wasserstein distance between the marginal distributions  and the stationary law, in the homogeneous case can be found in \cite{bois:11} (see also  Boissard and Le Gouic in \cite{bois:lego:14} for controls on the expectations of this Wasserstein distance).
The key point of these works is to demonstrate contraction properties of the transition kernel of the homogeneous Markov chain for a Wasserstein metric, which requires some continuity in this metric for the transition law involved, see e.g.~\cite{blow:boll:06}.

In the current work, we aim to establish an optimal non-asymptotic concentration inequality for $\nu_n(f)- \nu(f )$. 
%
When $| \sigma^* \nabla \varphi |^2 - \nu(| \sigma^* \nabla \varphi |^2 )$ is a coboundary, we manage to improve the estimates in \cite{hon:men:pag:16}. 
Insofar, we better the variance in the upper-bound obtained in Theorem 3 therein: when the time step is such that $\gamma_n \asymp n^{- \theta}$, for $\theta>\frac 13$, for all $n \in \N$, 
\textcolor{black}{for a smooth enough function $\varphi$ s.t. $\mathcal A \varphi = f-\nu(f)$}, under 	
suitable assumptions (further called Assumptions \A{A}),  and if $\|\sigma\|^2$ is coboundary then there exist explicit non-negative sequences 
$(\widetilde c_n)_{n\ge 1}$ and $(\widetilde C_n)_{n\ge 1}$, respectively increasing and decreasing for $n$ large enough, with $\lim_n \widetilde C_n = \lim_n \widetilde c_n =1$ 
s.t. for all $n\ge 1$ and $0<a=o(\sqrt{\Gamma_n})$:
\begin{equation*}
\P\big[ \sqrt{\Gamma_n} |\nu_n(f )-\nu(f)| \geq a \big]
=
 \P\big[ |\sqrt{\Gamma_n}\nu_n( \mathcal{A} \varphi )| \geq a \big]
  \leq 2 \widetilde C_n \exp \big( - \frac{\widetilde c_n a^2 }{2\nu(\|\sigma\|^2)\|\nabla \varphi\|_\infty^2}  
\big). 
\end{equation*}
In fact, we get below the optimal variance bound, namely the \textit{carré du champ} $ \nu(|\sigma^* \nabla \varphi |^2)$, instead of  the expression $\nu(\|\sigma\|^2)\|\nabla \varphi\|_\infty^2$ as in the previous inequality.
Up to the same previously indicated deviation threshold, $a=o(\sqrt{\Gamma_n}) $, we derive the optimal Gaussian concentration. Consequently, we are able to derive directly some sharp non-asymptotic confidence intervals.

To establish our non-asymptotic results, we use martingale increment techniques which turn out to be very robust in a rather large range of application fields.
Let us for instance mention 
the work of Frikha and Menozzi~\cite{frik:meno:12} which establishes non-asymptotic bounds for the regular Monte Carlo error associated with the Euler discretization of a diffusion until a finite time interval $[0,T] $ and for a class of stochastic algorithms of Robbins-Monro type. 
Still with martingale approach, 
Dedecker and Gou\"ezel~\cite{dede:goue:15} have 
obtained non-asymptotic deviation bounds for separately bounded functionals of geometrically ergodic Markov chains on a general state space. 
Eventually, we can refer again to the work  \cite{hon:men:pag:16} in the current setting. 

The  paper is organized as follows.
In Section \ref{Assumption_existing_result}, we state our notations and assumptions as well as some known and useful results related to our approximation scheme.
Section \ref{MR} is devoted to our main concentration results \textcolor{black}{(for a certain deviation regime that we will call \textit{Gaussian deviations})}, we also state therein several technical 
lemmas whose proofs are postponed to Section~\ref{SEC_TEC}. 
Importantly, we also provide a user's guide to the proof which emphasizes the key steps in our approach.
Section~\ref{SEC_TEC} is the technical core of the paper.
 We then discuss in Section \ref{sec_Reg} some regularity issues for the considered test functions. Namely, we recall some assumptions introduced in \cite{hon:men:pag:16} which yield appropriate regularity concerning the solution of Poisson equation. We also extend there our main results to test functions $f$ that are Lipschitz continuous, up to some constraints on the step sequence. 

\textcolor{black}{ We proceed in Section \ref{sec_optim} to the explicit optimization of the constants appearing in the concentration bound deriving from our approach. Intrinsically, this procedure conducts to two deviation regimes, the \textit{Gaussian} one up to $a=o(\sqrt{\Gamma_n}) $ and the \textit{super Gaussian} one for $a\gg\sqrt{\Gamma_n} $ which deteriorates the concentration rate. Even though awkward at first sight (see e.g. Remark \ref{LA_RQ_SUR_LES_REGIMES} below), this refinement 
turns out to be useful for some  numerical purposes, as it emphasized in  Section~\ref{SEC_NUM}.
}
We conclude there with some numerical results associated with 
a degenerate diffusion.
Some additional technical details needed in Section~\ref{sec_optim}
 are gathered in Appendix \ref{asymptotic_analysis}.

\section{Assumptions and Existing Results}\label{Assumption_existing_result}
\setcounter{equation}{0}
\subsection{General notations}
\label{notations}
For all step sequence $(\gamma_n)_{n\ge 1} $, we denote: 
\begin{equation*}
\forall \ell \in \R,\ \Gamma_n^{(\ell)} :=\sum_{k=1}^n \gamma_k^\ell, \ 
\Gamma_n := \sum_{k=1}^n \gamma_k = \Gamma_n^{(1)}.
\end{equation*}
Practically, the time step sequence is assumed to have the form: $\gamma_n \asymp \frac{1}{n^{\theta}}$ with $\theta \in (0, 1]$, where for two sequences $(u_n)_{n\in \N}, \ (v_n)_{n\in \N}$ the notation $u_n \asymp v_n$ means that $\exists n_0 \in \N, \ \exists C\geq 1$ s.t. $\forall n \geq n_0, \ C^{-1} v_n \leq u_n \leq C v_n$.
\\

We will denote by $C$ a non negative constant,  and by $(e_n)_{n \geq 1}, (\mathscr R_n)_{n \geq 1} $ deterministic generic sequences s.t. $e_n \underset{n}{\rightarrow} 0$ and $\mathscr R_n \underset{n}{\rightarrow} 1$,
that may change from line to line. The constant $C$ depends, uniformly in time, as well as the sequences $(e_n)_{n \geq 1}, (\mathscr R_n)_{n \geq 1} $,
  on known parameters appearing in the assumptions introduced in Section \ref{Hypotheses} (called \A{A} throughout the document). Other possible dependencies will be explicitly specified.
\\

In the following, for any smooth enough function $f$, for $k \in \N$ we will denote $D^kf$ the tensor of the $k^{\rm th}$ derivatives of $f$. Namely $D^k f= (\partial_{{i_1}} \hdots \partial_{{i_k}} f)_{1 \leq i_1, \hdots ,  i_k \leq d}$.
However, for a multi-index $\alpha \in \N_0^d:=(\N\cup \{0\})^d$, we set $D^\alpha f= \partial_{x_1}^{\alpha_1} \hdots \partial_{x_d}^{\alpha_d}f : \R^d \to \R$.
\\

For a $\beta$-Hölder continuous function $f:\R^d\rightarrow \R$
, we introduce the notation 
$$[ f]_\beta:=\sup_{x\neq x'}
\frac{| f(x)- f(x')|}{|x-x'|^\beta}< +\infty,$$
for its H\"older modulus of continuity. Here, $|x-x' |$ stands for the Euclidean norm of $x-x'\in \R^d $.

We denote, for $(p,m) \in \N^2$, by $\mathcal C^p(\R^d, \R^m)$ the space of $p$-times continuously differentiable functions from $\R^d $ to $\R^m$.
Besides, for $f\in {\mathcal C}^{p}(\R^d,\R^m)$, $p\in \N$, 
we define for $\beta\in (0,1] $ the Hölder modulus: 
\begin{equation*}
[ f^{(p)}]_\beta:=\sup_{x\neq x', 
|\alpha|=p}\frac{|D^\alpha f(x)-D^\alpha f(x')|}{|x-x'|^\beta}\le +\infty,
\end{equation*}
where $\alpha$ (viewed as an element of $\N^d$) is a multi-index of length $p$, i.e. $|\alpha|:=\sum_{i=1}^d \alpha_i=p $. 
\textcolor{black}{Hence, in the above definition, the $|\cdot|$ in the numerator is the usual absolute value.}
 We will as well use the notation $\leftB n,p\rightB $, $(n,p)\in (\N_0)^2, n\le p $, for the set of integers being between $n$ and $p$.
\\

From now on, we introduce for $k\in \N_0,\beta\in (0,1] $ and $m\in \{1,d,d\times r\} $ the H\"older spaces
\begin{eqnarray}\label{def_Holder}
&&{\mathcal C}^{k,\beta}(\R^d,\R^m)
:=\{  f \in {\mathcal C}^{k}(\R^d,\R^m):  \forall \alpha \in \N^d, |\alpha|\in \leftB 1,k\rightB, \sup_{x\in \R^d} |D^\alpha f(x)|<+\infty  , [f^{(k)}]_\beta<+\infty\},
\nonumber \\
&&{\mathcal C}^{k,\beta}_b(\R^d,\R^m)
:= 
 \{  f \in {\mathcal C}^{k,\beta}(\R^d,\R^m) :  \|f\|_\infty<+\infty  \}.
\nonumber \\
\end{eqnarray}
In the above definition,
for a bounded mapping $\zeta: \R^d \to \R^m$, $m \in \{1,d,d \times r\}$, we write $\| \zeta \|_\infty := \sup_{x \in \R^d} \| \zeta \zeta^*(x) \| $ with
 $\|\zeta(x)\| = \Tr \left ( \zeta \zeta^*(x) \right )^{1/2} $, where for $M\in \R^m\otimes \R^m$, $\Tr(M) $ stands for the trace of $M$.
Hence $\| \cdot \|$ is the Fr\"obenius norm 
\footnote{This notation allows to define similarly vector and matrix norms. In fact, $\R^d $ vectors can be regarded as line vectors. Then we define similarly for both cases the uniform norm $\|\cdot \|_\infty$}.

With these notations, ${\mathcal C}^{k,\beta}(\R^d,\R^m) $ stands for the subset of ${\mathcal C}^k(\R^d,\R^m) $ whose elements have bounded derivatives up to order $k$ and $\beta $-H\"older continuous $k^{\rm th}$ derivatives. For instance, the space of Lipschitz continuous functions from $\R^d $ to $\R^m $ is denoted by   ${\mathcal C}^{0,1}(\R^d,\R^m) $.\\

Eventually, for a given Borel function $f:\R^d\rightarrow E$, where $ E$ can be $\R,\ \R^d,\ \R^d\otimes \R^r, \R^d\otimes \R^d $, we set for $k\in \N_0 $:
\begin{equation*}
f_k :=f(X_k).
\end{equation*}
For $k\in \N_0$, we denote by $\F_k:={\boldsymbol{\sigma}}\big( (X_j)_{j\in \leftB 0,k\rightB}\big) $ the ${\boldsymbol{\sigma}} $-algebra generated by the $(X_j)_{j\in \leftB 0,k\rightB} $.\\

\subsection{Hypotheses}
\label{Hypotheses}
\begin{trivlist}
\item[\A{C1}] The first term of the random sequence $X_0$ is supposed to be sub-Gaussian, i.e. there is a threshold $\lambda_0 >0$ such that:
\begin{equation*}\label{cond_X0}
\forall \lambda < \lambda_0,\quad \E[\exp(\lambda |X_0|^2)] < +\infty.
\end{equation*}

\item[\A{GC}]
The innovations $(U_n)_{n\ge 1} $ form an i.i.d. sequence with law $\mu$, we also assume that $\E[U_1]=0$ and for all $(i,j,k)\in \left\{ 1,\cdots,r\right\}^3 $, $\E[U_1^i U_1^j]=\delta_{ij},\ \E[U_1^iU_1^jU_1^k]=0 $. Moreover, $(U_n)_{n\ge 1} $ and $X_0$ are independent. Eventually, $U_1$ satisfies the following standard Gaussian concentration property, i.e. for every $1-$Lipschitz continuous function $g:\R^r\rightarrow \R$ and every $\lambda>0 $:
\begin{equation*}
\E\big[\exp(\lambda g(U_1))\big] \leq \exp\big( \lambda \E[g(U_1)] + \frac{ \lambda^2}{2}\big).
\end{equation*}
In particular, Gaussian and symmetrized Bernoulli random variables (in short r.v.) satisfy this inequality.

Pay attention that a wider class of sub-Gaussian distributions could be considered. Namely, random variables for which there exists $\varpi>0 $ s.t. for all $\lambda>0 $:
\begin{equation}
\label{GEN_GC}
\E\big[\exp(\lambda g(U_1))\big] \leq \exp \big( \lambda \E[g(U_1)] + \frac{ \varpi \lambda^2}{4} \big).
\end{equation}
It is well know that this assumption yields that for all $r\ge 0$, $\P[|U_1|\ge r]\le 2\exp(-\frac{r^2}{\varpi}) $.
\\

\item[\A{C2}]
There is a positive constant $\kappa $ s.t., defining for all $x\in \R^d, \ \Sigma(x):=\sigma\sigma^*(x) $:
\begin{align*}\label{B}
\sup_{x\in \R^d} \Tr(\Sigma(x))=\sup_{x\in \R^d}\|\sigma(x)\|^2 \leq \kappa.
\end{align*}
\item[($\mathbf{\mathcal{L}_V}$)]
We consider the following Lyapunov like stability condition:
\\

There exists $V: \mathbb{R}^d \longrightarrow [v^*, +\infty[$ with $v^* >0$ s.t. 

\begin{enumerate}[label=\roman*)]

\item $V \in \mathcal{C}^2(\R^d, \R)$, $\| D^2 V \| _{\infty} < \infty$, and $\lim_{|x| \rightarrow \infty} V(x) = + \infty$.

\item There exists $C_V \in (0,+\infty)$ s.t. for all $x \in \R^d$:
\begin{equation*}\label{L}
|\nabla V(x)|^2 + |b(x)|^2 \leq C_V V(x).
\end{equation*}

\item Let $\mathcal{A}$ be the infinitesimal generator associated with the diffusion equation~\eqref{eq_diff}, defined for all $\varphi \in \mathcal C_0^2(\R^d,\R) $ and for all $x\in \R^d $ by:
\begin{equation}\label{A_phi}
\mathcal{A} \varphi(x)= b(x) \cdot \nabla \varphi(x) + \frac1{2} \Tr \big( \Sigma(x) D^2 \varphi(x)\big),
\end{equation}
where, for two vectors $v_1,v_2 \in \R^d $, the symbol $v_1 \cdot v_2$ stands for the canonical inner product of $v_1$ and $v_2 $. 
There exist $\alpha_V >0$, $\beta_V \in \R^+$ s.t. for all $x \in \R^d$,
\begin{equation*}
\mathcal{A} V(x) \leq -\alpha_V V(x) + \beta_V.
\end{equation*}
\end{enumerate}

\item[\A{U}] There is a unique invariant measure $\nu $ to equation~\eqref{eq_diff}. \\

For $\beta\in (0,1] $, we introduce:
\item[\A{T${}_\beta $}]  We choose a test function $\varphi$ for which
\begin{enumerate}
\item[i)] $\varphi $  smooth enough, i.e. $\varphi \in {\mathcal C}^{3,\beta}(\R^d,\R)$,

 \hspace{-1cm} We further assume that:
\item[
ii)] the mapping $x\mapsto \langle b(x),\nabla\varphi(x) \rangle  $ is Lipschitz continuous. 
\item[
iii)] there exists $ C_{V,\varphi}>0$ s.t. for all $x\in \R^d$ $ |\varphi(x)|\le C_{V,\varphi}(1+\sqrt{V(x)}). $
\end{enumerate}

 \vspace{0.25cm}
 \item[\A{S}]
 We assume that the sequence $(\gamma_k)_{k\ge 1} $ is \textit{small enough}, Namely, we suppose that for all
$ k \geq 1$: 
\begin{equation*}
\gamma_k \le \min \Big (\frac{1}{2\sqrt{C_V\bar c}}, \frac{\alpha_V}{2C_V \| D^2 V \|_{\infty}}\Big ).
\end{equation*}
The constraint in \A{S} means that the time steps have to be sufficiently \textit{small} w.r.t. the diffusion coefficients and the Lyapunov function.
\end{trivlist}

\begin{remark}
\label{THE_REM_COEFF}
The above condition (${\mathcal{L}_V}$) actually implies that the drift coefficient $b$ lies, out of a compact set, between two hyperplanes separated from 0. Also, the Lyapunov function is lower than the square norm.
In other words, there exist constants $K,\bar c>0$ such that for all $|x| \geq K$, 
\begin{eqnarray}\label{b_V}
|V(x)| \leq \bar c |x|^2, \ |b(x)| \leq \sqrt{C_V \bar c} |x|.
\end{eqnarray}
Observe that we have supposed \A{U} without imposing any non-degeneracy conditions. Existence of invariant measure follows from \A{${\mathcal L}_{{\mathbf V}} $}  (see~\cite{ethi:kurz:97}). 
For uniqueness, additional conditions need to be considered ((hypo)ellipticity~\cite{khasminskii2011stochastic},~\cite{pard:vere:01},~\cite{vill:09} or confluence~\cite{page:panl:12}).
\end{remark}


 \begin{remark}\label{Krylov_Priola} 
In \A{T${}_\beta $}, condition ii) \textcolor{black}{is direct if we consider Lyapunov function $V(x) \asymp 1+|x|^2$. Indeed, $\varphi$ is supposed, in condition \A{T${}_\beta $} i), to be Lipschitz continuous and so under a linear map. 
Hypothesis ii) is natural when there is a function $f \in {\mathcal C}^{1,\beta}( \R^d, \R)$ 
with $\nu(f)=0$ s.t.
\begin{equation}\label{Remark_ii_T}
\mathcal A \varphi = f.
\end{equation}
From the definition of ${\mathcal A} $ in \eqref{A_phi}, we rewrite:
\begin{equation}\label{eq_b_nabla_varphi}
\langle \nabla \varphi(x), b(x)\rangle =f(x)-\nu(f)-\frac 12 \Tr\Big(\Sigma(x) D_x^2 \varphi(x) \Big).
\end{equation}
Since the source $f$ is Lipschitz continuous, and $\sigma, D_x^2 \varphi$ are bounded and Lipschitz continuous, the left hande side of the equation \eqref{eq_b_nabla_varphi} is also Lispchitz continuous.}
%
\end{remark}

We say that assumption \A{A} holds whenever \A{C1}, \A{GC}, \A{C2}, \A{$\mathbf{{\mathcal L}_V} $}, \A{U}, \A{T${}_\beta $} for some $\beta\in (0,1] $  and \A{S} are fulfilled. Except when explicitly indicated, we assume throughout the paper that assumption \A{A} is in force.
\\

Assume the step sequence $(\gamma_k)_{k\ge 1} $ is chosen s.t. $\gamma_k\asymp k^{-\theta},\ \theta\in (0,1] $. In particular, this implies that, for any $\varepsilon \geq 0$, $\Gamma_n^{(\varepsilon)} \asymp n^{ 1-\varepsilon \theta}$ if $\varepsilon \theta < 1$, $\Gamma_n^{(\varepsilon)} \asymp \ln(n)$ if $\varepsilon \theta = 1$ and $\Gamma_n^{(\varepsilon)} \asymp1$ if $\varepsilon \theta > 1$. 

\subsection{On some Related Existing Result}\label{existing_thm}

In \cite{lamb:page:02}, Lamberton and Pagès, proved an asymptotic result with the  decreasing step scheme \eqref{scheme}. 
Precisely, they obtain the discrete counterpart of \eqref{Bhat} established in \cite{bhat:82}, emphasizing as well some discretization effects, leading to a bias in the limit law, when the time step  becomes too coarse. This last case is however the one leading to the highest convergence rates in the CLT. 
We recall here their main results, Theorem 10 of the above reference,  for the sake of completeness.
\begin{theo}\label{theo}[Asymptotic Limit Results in \cite{lamb:page:02}]
Assume \A{C2}, \A{$\mathbf{{\mathcal L}_V} $}, \A{U} hold. If $\E[U_1]=0,\ \E[U_1^{\otimes 3}]=0$, we get the following limit results where $\nu_n$ stands for the empirical measure defined in \eqref{measure}.
\begin{enumerate}
\item[(a)] \textit{Fast decreasing step.} If $ \theta \in (\frac 13, 1]$  and $\E[|U_1|^6] < +\infty$, then, for all function $\varphi \in \mathcal{C}^{2,1}(\R^d,\R)\cap \mathcal{C}^{3}(\R^d,\R)$, one has:
\begin{equation*}
\sqrt{\Gamma_n} \nu_n (\mathcal{A} \varphi) \underset{n \to \infty}{\overset{\mathcal{L}}{\longrightarrow}} \mathcal{N} \big(0, \int_{\R^d} | \sigma^* \nabla \varphi|^2 d\nu \big).
\end{equation*}
\item[(b)] \textit{Critical decreasing step.} If $\theta= \frac 13$ and if $\E[|U_1|^8] < +\infty$, then for all function $\varphi \in  \mathcal{C}^{3,1}(\R^d,\R) \cap \mathcal C^4(\R^d,\R)  $, one gets: 
\begin{equation*}
\sqrt{\Gamma_n} \nu_n (\mathcal{A} \varphi) \underset{n \to \infty}{\overset{\mathcal{L}}{\longrightarrow}} \mathcal{N}\big(\widetilde{\gamma} m, \int_{\R^d} | \sigma^*  \nabla \varphi|^2 d\nu \big),
\end{equation*}
where 
\begin{eqnarray*}
\ \widetilde \gamma &:=&\lim_{n \to + \infty} \frac{\Gamma_n^{(2)}}{\sqrt{\Gamma_n}}, \ m := -  \int_{\R^d}  \Big( \Tr \big ( \frac{1}{2} D^2 \varphi(x) b(x) ^{\otimes 2} \big )+ \Phi_4(x) \Big) \nu(dx), 
\\
\Phi_4(x) &:=& \int_{\R^r} \Tr \Big( \frac{1}{2}  D^3 \varphi(x) b(x)( \sigma(x) u ) ^{\otimes 2}  + \frac{1}{24} D^4 \varphi(x) ( \sigma(x) u)^{\otimes 4} \Big) \mu (du),
\end{eqnarray*}
recalling that $\mu$ denotes the law of the i.i.d.  innovations $(U_k)_{k\ge 1} $
\footnote{With our tensor notations, 
$D^2 \varphi(x) b(x) ^{\otimes 2}  \in (\R^d)^{ \otimes 2}$, $D^3 \varphi(x) b(x)( \sigma(x) u ) ^{\otimes 2} \in (\R^d)^{ \otimes 3}$,  and $D^4 \varphi(x) ( \sigma(x) u)^{\otimes 4} \in  (\R^d)^{ \otimes 4}.$ 
}.

\item[(c)] \textit{Slowly decreasing step.} If $\theta \in (0, \frac 13)$ and if $\E[|U_1|^8] < +\infty$, then for all globally Lipschitz function $\varphi \in\mathcal{C}^{3,1}(\R^d,\R) \cap \mathcal C^4(\R^d,\R)  $, one gets: 
\begin{equation*}
\frac{\Gamma_n}{\Gamma_n^{(2)}} \nu_n ({\mathcal A} \varphi ) \overset{\P}{\longrightarrow}  m.
\end{equation*}
\end{enumerate}
\end{theo}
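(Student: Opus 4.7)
The starting observation is that $\mathcal{A}\varphi$ is exactly what we can identify by comparing the scheme to a Taylor expansion of $\varphi$. Write, for $\xi_k$ lying on the segment $[X_{k-1},X_k]$,
\begin{equation*}
\varphi(X_k)-\varphi(X_{k-1}) = \nabla\varphi(X_{k-1})\cdot(X_k-X_{k-1}) + \tfrac12 D^2\varphi(X_{k-1})(X_k-X_{k-1})^{\otimes 2} + R_k^{(3,4)},
\end{equation*}
where $R_k^{(3,4)}$ gathers the third and fourth order Taylor terms (the fourth order evaluated at $\xi_k$; we stop at order $2$ only in case (a), pushing up to order $4$ in cases (b) and (c)). Taking conditional expectation with respect to $\F_{k-1}$, using $\E[U_k]=0$, $\E[U_k U_k^*]=I_r$, $\E[U_k^{\otimes 3}]=0$ and $X_k-X_{k-1}=\gamma_k b_{k-1}+\sqrt{\gamma_k}\sigma_{k-1}U_k$, the leading term on the right-hand side equals $\gamma_k\mathcal{A}\varphi(X_{k-1})$, the next order contribution is of size $\gamma_k^2$, and isolating a martingale increment $\Delta M_k:=\varphi(X_k)-\varphi(X_{k-1})-\E[\varphi(X_k)-\varphi(X_{k-1})\mid\F_{k-1}]$ I get the telescoping identity
\begin{equation*}
\sum_{k=1}^n\gamma_k\mathcal{A}\varphi(X_{k-1}) \;=\; \varphi(X_n)-\varphi(X_0) \;-\;\sum_{k=1}^n\Delta M_k \;-\; \sum_{k=1}^n \gamma_k^2\, \Psi(X_{k-1}) \;+\; \mathcal{E}_n,
\end{equation*}
where $\Psi(x):= \tfrac12 \Tr(D^2\varphi(x)b(x)^{\otimes 2}) + \Phi_4(x)$ and $\mathcal{E}_n$ is a lower order error.

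Dividing by $\sqrt{\Gamma_n}$, three pieces must be treated. First, the boundary term $(\varphi(X_n)-\varphi(X_0))/\sqrt{\Gamma_n}$: using assumption \A{T$_\beta$} iii) together with the Lyapunov control \A{$\mathcal{L}_V$} yielding $\sup_n \E[V(X_n)]<\infty$, this is $O_{\P}(\Gamma_n^{-1/2})$ and thus negligible. Second, the martingale term: I would invoke a martingale CLT (Lindeberg--Brown type). The predictable bracket satisfies
\begin{equation*}
\sum_{k=1}^n \E\bigl[(\Delta M_k)^2\mid\F_{k-1}\bigr] \;=\; \sum_{k=1}^n \gamma_k\, |\sigma^*\nabla\varphi|^2(X_{k-1}) \;+\; O\!\left(\sum_{k=1}^n \gamma_k^2\right),
\end{equation*}
so dividing by $\Gamma_n$ and using the discrete ergodic theorem of Lamberton--Pag\`es for the empirical measure applied to the continuous function $|\sigma^*\nabla\varphi|^2$ (of polynomial growth thanks to the boundedness of $\sigma$ and $\nabla\varphi$), the bracket converges $\P$-a.s.\ to $\int|\sigma^*\nabla\varphi|^2 d\nu$. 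The Lindeberg condition is verified thanks to the moments $\E[|U_1|^6]<\infty$ in case (a) (resp.\ $\E[|U_1|^8]<\infty$ in (b) and (c)), combined with the Lyapunov moment bound. Third, the \emph{bias} piece $\Gamma_n^{-1/2}\sum_{k=1}^n \gamma_k^2\Psi(X_{k-1})$: rewrite it as $\tfrac{\Gamma_n^{(2)}}{\sqrt{\Gamma_n}}\cdot \frac{1}{\Gamma_n^{(2)}}\sum_{k=1}^n \gamma_k^2\Psi(X_{k-1})$. A weighted ergodic theorem with weights $\gamma_k^2$ gives that the right factor tends to $\int \Psi d\nu=-m$, while the asymptotic behavior of $\Gamma_n^{(2)}/\sqrt{\Gamma_n}$ dictates the regime: $\to 0$ for $\theta\in(1/3,1]$ yielding (a), $\to \widetilde\gamma$ at the critical exponent $\theta=1/3$ yielding the centered bias $\widetilde\gamma m$ in (b), and divergent for $\theta\in(0,1/3)$, forcing the renormalization by $\Gamma_n/\Gamma_n^{(2)}$ in (c) and leaving only the bias (the martingale part becoming negligible at this scale, which is why (c) is stated as convergence in probability to the deterministic limit $m$).

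The hardest step is the careful accounting of the $\gamma_k^2$-order contributions. Both $\tfrac12 D^2\varphi(X_{k-1})b(X_{k-1})^{\otimes 2}$ (coming from the drift square in the second order term) and the $\Phi_4$ part (coming from the fourth order Taylor term with four Brownian-like factors and one surviving $\gamma_k^2$) must be identified; third-order terms involving $b\otimes(\sigma U)^{\otimes 2}$ also appear at order $\gamma_k^2$ and must be either absorbed into the martingale part or shown to produce a vanishing contribution via integration against $\nu$. Equally delicate is establishing that truly higher order remainders ($\gamma_k^3$ and above, or terms with H\"older-type control from $[D^3\varphi]_\beta$ or $[D^4\varphi]_\beta$) summed and divided by $\sqrt{\Gamma_n}$ still vanish under the moment assumption on $U_1$ and Lyapunov moments on $X_k$. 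Once those bookkeeping points are settled and the weighted ergodic theorem of \cite{lamb:page:02} is used to identify the limiting quadratic variation, the three regimes fall out directly from the scaling of $\Gamma_n^{(2)}/\sqrt{\Gamma_n}$.
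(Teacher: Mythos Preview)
Your proof sketch is essentially correct and follows the classical route: Taylor decomposition of $\varphi(X_k)-\varphi(X_{k-1})$, isolation of the martingale part, identification of the predictable bracket via the discrete ergodic theorem, martingale CLT, and the trichotomy coming from the scaling of $\Gamma_n^{(2)}/\sqrt{\Gamma_n}$. This is precisely the strategy of Lamberton and Pag\`es in \cite{lamb:page:02}.

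However, note that the present paper does \emph{not} prove this theorem at all: it is stated in Section~\ref{existing_thm} as a recalled result (Theorem~10 of \cite{lamb:page:02}), with the explicit pointer ``We refer to Theorems~9 and~10 in~\cite{lamb:page:02} for additional details.'' There is therefore no proof in the paper to compare against; the theorem serves only as background motivating the non-asymptotic concentration bounds that form the paper's actual contribution. Your sketch would be an adequate summary of the argument in the cited reference, though a full proof requires care on two points you flag yourself: the weighted ergodic theorem with weights $\gamma_k^2$ (which is established separately in \cite{lamb:page:02}) and the precise bookkeeping showing that the $D^3\varphi\, b\otimes(\sigma U)^{\otimes 2}$ contribution at order $\gamma_k^2$ is part of $\Phi_4$ rather than being ``absorbed into the martingale part''.
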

It is possible to relax the boundedness condition on $\sigma $ in \A{C2}, considering $\lim_{|x|\rightarrow +\infty}\frac{|\sigma^*  \nabla \varphi(x)|^2}{V(x)}=0 $ (strictly sublinear diffusion) in case \textit{(a)} and $\sup_{x\in \R^d}\frac{|\sigma^*  \nabla \varphi(x)|^2}{V(x)}<+\infty $ (sublinear diffusion) in case \textit{(b)}. We refer to Theorems 9 and 10 in~\cite{lamb:page:02} for additional details.

\begin{remark}
First of all, observe that the normalization is the same as for \eqref{Bhat}. It is the square root of the considered running time, namely  $t$ for the diffusion and $\Gamma_n$ for the scheme. In other words, a CLT is still available for the discretization procedure.
However, by choosing a critical time step, i.e. for the fast convergence $\theta= \frac 13$, a bias is begot. It can be regarded as  a discretization effect.
Note that, for all $\theta \geq \frac 13$, any step leads to the same asymptotic variance, namely the \textit{carré du champ}, $ \int_{\R^d} | \sigma^*  \nabla \varphi|^2 d\nu $, like in \cite{bhat:82}.
However, for the slow decreasing step, $\theta < \frac 13$, the discretization effect is prominent and ``hides" the CLT.
\end{remark}

Let us also mention the work of Panloup \cite{panl:08:AAP}, where under similar assumptions for stochastic equation driven by a Lévy process, the convergence of the decreasing time step algorithm towards the invariant measure of the stochastic process is established (see also \cite{panl:08} for the CLT associated with square integrable Lévy innovations).
\\

In the current diffusive context, i.e. under \A{A}, some non-asymptotic results were successfully established in  \cite{hon:men:pag:16}. 
It was as well observed there that if we slacken the regularity of the test function $\varphi$, a new bias looms.
For $ \varphi \in \mathcal C^{3,\beta}(\R^d,\R)$ 
with $\beta \in (0,1)$, if $\theta= \frac{1}{2+\beta}$ then $\sqrt{\Gamma_n} \nu_n(\mathcal A \varphi)$ exhibits deviations similar to the ones of a biased normal law with a different bias than in Theorem \ref{theo}, c.f. Theorems 2, 3, 4 and 5 in \cite{hon:men:pag:16}. 
When $\beta=1 $, the two biases correspond.
We willingly shirk any discussion about bias appearance, which is discussed in the formerly mentioned article.
Our target is to refine Theorem 4 in \cite{hon:men:pag:16} that we recall:

\begin{theo}\label{THM_COBORD} [Non-asymptotic concentration inequalities in \cite{hon:men:pag:16}]
Assume \A{A} holds, 
if there is $\vartheta \in \mathcal C^{3,\beta}(\R^d,\R)$ satisfying \A{T${}_\beta$} and s.t.
 $$\mathcal A \vartheta=\|\sigma\|^2- \nu(\|\sigma\|^2).$$
For $\beta\in (0,1] $ and $ \theta \in (\frac{1}{2+\beta}, 1 ]$, there exist two explicit monotonic sequences $\tilde c_n\le 1\le \tilde C_n,\ n\ge 1$,   with $\lim_n \tilde C_n = \lim_n \tilde c_n =1$
such that for all $n\ge 1$ and $a>0$:
 \begin{eqnarray}\label{DEV_HMP}
&& \P\big[ |\sqrt{\Gamma_n}\nu_n( \mathcal{A} \varphi )| \geq a \big] 
   \leq 2\, \widetilde C_n \exp \big (\! - \frac{\widetilde c_n}{2\nu(\|\sigma\|^2)\|\nabla \varphi\|_\infty^2}\, \Phi_n(a)
\big), \nonumber \\
&&\Phi_n(a):=\Big[ \!\Big(\! a^2\big( 1-\frac{2}{1+\sqrt{1+4\,\bar c_n^3\,\frac{\Gamma_n}{a^2}}}\big)  \Big)\! \vee \! \Big(\!a^{\frac 43}\Gamma_n^{\frac{1}{3}} \bar c_n\big(1-\frac 23\bar c_n \big (\frac{\Gamma_n }{a^{2}} \big)^{\frac 13} \big)_+ \Big) \Big],
\end{eqnarray}
where $x_+=\max(x,0)$ and  $ \bar c_n:=\left(\frac{[\varphi]_1}{[\vartheta]_1} \right)^{2/3}\nu(\|\sigma\|^2)\|\sigma\|_\infty^{-2/3}\check c_n$ with $\check c_n $ being an explicit nonnegative sequence s.t. $\check c_n \downarrow_n 1 $.
\end{theo}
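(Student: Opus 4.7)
The strategy is to decompose $\sqrt{\Gamma_n}\,\nu_n(\mathcal A\varphi)$ as an $(\F_k)$-martingale plus negligible boundary and bias terms, then to estimate its Laplace transform via sub-Gaussian concentration applied along the chain, and finally to bound the random conditional variance through the coboundary assumption on $\|\sigma\|^2$ to replace it by its ergodic limit $\nu(\|\sigma\|^2)$.

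I first Taylor-expand $\varphi(X_k)-\varphi(X_{k-1})$ to order three along the scheme \eqref{scheme}. Using the moment matching conditions of \A{GC}, the conditional increment reads $\gamma_k\,\mathcal A\varphi(X_{k-1})+\rho_k$, where $\rho_k$ is a remainder of order $\gamma_k^{1+\beta/2}+\gamma_k^2$ controlled by the H\"older regularity required in \A{T${}_\beta$}. A telescopic rearrangement then yields
\begin{equation*}
\sqrt{\Gamma_n}\,\nu_n(\mathcal A\varphi) \;=\; \frac{1}{\sqrt{\Gamma_n}}\Bigl(\varphi(X_0)-\varphi(X_n)+M_n-\sum_{k=1}^n\rho_k\Bigr),
\end{equation*}
where $M_n=\sum_{k=1}^n \Delta M_k$ is a martingale. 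Under $\theta>\tfrac{1}{2+\beta}$ the deterministic bias $\Gamma_n^{-1/2}\sum\rho_k$ vanishes, while the boundary term $\Gamma_n^{-1/2}(\varphi(X_0)-\varphi(X_n))$ is harmless thanks to the sub-Gaussian integrability of $(X_n)_{n\ge0}$ inherited from \A{C1}, \A{$\mathcal L_V$} and \A{T${}_\beta$} iii).

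Next I estimate $\ee[\exp(\lambda M_n/\sqrt{\Gamma_n})]$ by iterated conditioning. Conditionally on $\F_{k-1}$, the map $u\mapsto \sqrt{\gamma_k}\,\nabla\varphi(X_{k-1})\cdot\sigma(X_{k-1})u$ is $\sqrt{\gamma_k}\,|\sigma^{*}\nabla\varphi(X_{k-1})|$-Lipschitz, so \A{GC} yields a Gaussian bound for the dominant part of $\Delta M_k$, while the higher order $\gamma_k\,D^{2}\varphi\,\sigma U^{\otimes 2}$ contributions are handled by a sub-Gaussian perturbation argument. Bounding $|\sigma^{*}\nabla\varphi|^2\le \|\nabla\varphi\|_\infty^2\,\|\sigma\|^2$ and using the coboundary identity $\|\sigma\|^2=\nu(\|\sigma\|^2)+\mathcal A\vartheta$ (where $\vartheta$ itself satisfies \A{T${}_\beta$}), I apply the same telescopic martingale machinery to $\vartheta$ together with a further exponential martingale inequality to replace $\sum_{k=1}^n \gamma_k\,\|\sigma(X_{k-1})\|^2$ by $\Gamma_n\,\nu(\|\sigma\|^2)$ up to a controlled exponential deviation. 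This nesting produces a Laplace bound of the form
\begin{equation*}
\ee\!\left[\exp\!\left(\tfrac{\lambda}{\sqrt{\Gamma_n}}M_n\right)\right] \;\le\; \widetilde C_n\,\exp\!\Bigl(\tfrac{\lambda^2}{2}A_n\;+\;\widetilde B_n(\lambda)\Bigr),
\end{equation*}
with $A_n=\nu(\|\sigma\|^2)\,\|\nabla\varphi\|_\infty^2$ and $\widetilde B_n(\lambda)$ a superquadratic correction of order $\bar c_n^{\,3}\,A_n\,\lambda^q/\Gamma_n^{(q-2)/2}$ inherited from the higher-order cumulants, the Taylor remainders and the exponential deviation on $\vartheta$.

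A Chernoff step $\P[\sqrt{\Gamma_n}\nu_n(\mathcal A\varphi)\ge a]\le \inf_{\lambda>0}\exp(-\lambda a)\,\ee[\exp(\lambda\sqrt{\Gamma_n}\nu_n(\mathcal A\varphi))]$ followed by explicit optimization in $\lambda$ then delivers the two branches of $\Phi_n(a)$: for $a\ll\sqrt{\Gamma_n}$ the critical $\lambda_\star$ balances the linear and quadratic contributions and produces the Gaussian branch $a^2\bigl(1-2/(1+\sqrt{1+4\bar c_n^{\,3}\Gamma_n/a^2})\bigr)$; for $a\gg\sqrt{\Gamma_n}$ it balances the linear and higher-order terms to yield the slower $a^{4/3}\Gamma_n^{1/3}\bar c_n$ branch, with the $\vee$ in $\Phi_n(a)$ simply retaining the best of the two. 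The symmetric treatment of the lower tail produces the factor $2$. The main technical obstacle is the careful bookkeeping of all remainders across the two nested applications of the Taylor/martingale machinery (first on $\varphi$, then on $\vartheta$): they must combine into monotone sequences $\widetilde c_n\uparrow 1,\ \widetilde C_n\downarrow 1$, and the step-size conditions \A{S} and $\theta>\tfrac{1}{2+\beta}$ are precisely what is needed to make these corrections vanish at the appropriate rates.
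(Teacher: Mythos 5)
This statement is not proved in the present paper: it is quoted verbatim as Theorem~4 of \cite{hon:men:pag:16} and serves only as the baseline that Theorem~\ref{THM_CARRE_CHAMPS} improves upon. That said, your outline is a faithful reconstruction of the argument carried out in \cite{hon:men:pag:16} and re-sketched in the present paper's User's Guide (Section~\ref{USER_GUIDE}): Taylor-expand and telescope to isolate the martingale $M_n$ plus boundary and H\"older-bias remainders (Lemma~\ref{decomp_nu} and the estimates \eqref{Ln}--\eqref{ineq_rest_b2_phi}); apply \A{GC} to the conditional Lipschitz modulus $[\psi_k(X_{k-1},\cdot)]_1\le\sqrt{\gamma_k}\,\|\sigma_{k-1}\|\,\|\nabla\varphi\|_\infty$, yielding the random quadratic form $\sum_k\gamma_k\|\sigma(X_{k-1})\|^2$; H\"older-split and run the same martingale machinery on the auxiliary coboundary $\mathcal A\vartheta=\|\sigma\|^2-\nu(\|\sigma\|^2)$ to replace the random sum by $\Gamma_n\nu(\|\sigma\|^2)$ at the price of a $\lambda^4/\Gamma_n^3$ correction (cf.\ \eqref{Mn_ineq_T1T2}--\eqref{T2_user} and Lemma~\ref{Mn}); then Chernoff and optimize the resulting quartic in $\lambda$ via Cardano, producing exactly the two competing branches of $\Phi_n(a)$. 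The place where the present paper genuinely goes further than what you describe is in refusing the crude bound $|\sigma^*\nabla\varphi|^2\le\|\nabla\varphi\|_\infty^2\|\sigma\|^2$: it instead keeps $|\sigma^*\nabla\varphi(X_{k-1})|^2$ pointwise and uses a Poisson equation with source $|\sigma^*\nabla\varphi|^2$ rather than $\|\sigma\|^2$, which is what trades the variance $\nu(\|\sigma\|^2)\|\nabla\varphi\|_\infty^2$ for the sharp \emph{carr\'e du champ} $\nu(|\sigma^*\nabla\varphi|^2)$. Your proposal correctly reproduces the coarser theorem as stated.
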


\begin{remark}\label{LA_RQ_SUR_LES_REGIMES}
Observe that two regimes compete in the above bound. 
From now on, we refer to \textbf{Gaussian deviations} when $\frac {a}{\sqrt{\Gamma_n}} \underset{n}{\rightarrow} 0$. In this case, asymptotically the right hand side of the inequality \eqref{DEV_HMP} is 
$2 \exp \big( - \frac{ a^2}{2\nu(\|\sigma\|^2)\|\nabla \varphi\|_\infty^2} \big )$. 
In other words, the empirical measure is sub-Gaussian with asymptotic variance equals to $\nu(\|\sigma\|^2) \| \nabla \varphi \|_{\infty}^2$ which is an upper-bound of the \textit{carré du champ}, $\nu(|\sigma^*  \nabla \varphi |^2)$ (asymptotic variance in the limit theorem). Thus, this is  not fully satisfactory.
Throughout the article, we refer to \textbf{super Gaussian deviations} when $\frac {a}{\sqrt{\Gamma_n}} \underset{n }{\rightarrow} + \infty$.
In this case, a subtle phenomenon appears: 
the right hand side gives a super Gaussian regime.
In particular, the term in the exponential of the r.h.s. of \eqref{DEV_HMP}  is bounded from above and below by $a^{4/3} \Gamma_n^{1/3}$. 
We anyhow emphasize that Theorem 2 in \cite{hon:men:pag:16} provides a non-asymptotic Gaussian concentration for all deviation regimes:
\begin{equation}\label{Theo_1_HMP}
\P\big[ |\sqrt{\Gamma_n}\nu_n( \mathcal{A} \varphi )| \geq a \big] 
\leq 2\, \widetilde C_n \exp \big (\! - \frac{\widetilde c_n a^2}{2 \|\sigma\|_{\infty}^2\|\nabla \varphi\|_\infty^2}
\big). 
\end{equation}
In particular, for \textit{super Gaussian deviations}, the deviation \eqref{Theo_1_HMP} is asymptotically better.
\textcolor{black}{However, it had already been observed in \cite{hon:men:pag:16} that the bound \eqref{DEV_HMP} turned out to be useful for numerical purposes as it led to bounds closer to the empirical realizations. We will derive in Theorem \ref{THM_CARRE_CHAMPS_super_Gaussian} of Section \ref{sec_optim} a deviation bound similar to \eqref{DEV_HMP} with an improved variance bound. Namely, we succeed to replace $ \nu(\|\sigma\|^2) \| \nabla \varphi \|_{\infty}^2$ by the \textit{carré du champ}. We then observe  in the numerical results of Section \ref{SEC_NUM} that the associated deviation bounds match rather precisely those of the empirical realizations.}

We will also employ the terminology of \textbf{intermediate Gaussian deviations} when $\frac {a}{\sqrt{\Gamma_n}} \underset{n}{\rightarrow} C>0$. For this regime, we keep a Gaussian regime with deteriorated constants.
\textcolor{black}{Again, we first deal with \textit{Gaussian deviations}, and we postpone the study of  \textit{super Gaussian deviations} to Section \ref{sec_optim}.} 
\end{remark}

\begin{remark}
Actually, in the proof of Theorem \ref{THM_COBORD}, we can only use a map $\vartheta$ satisfying assumption \A{T${}_\beta$} s.t. $\mathcal A \vartheta \geq |\sigma^*  \nabla \varphi |^2 - \nu( |\sigma^*  \nabla \varphi |^2)$.
However, this inequality is equivalent to the coboundary condition $\nu-a.s.$. In fact, we set the function $f:=\mathcal A \vartheta - |\sigma^*  \nabla \varphi |^2 + \nu( |\sigma^*  \nabla \varphi |^2) \geq 0$,
and 
\begin{equation}
\nu(f) = \nu(\mathcal A \vartheta) =0.
\end{equation}
As f is continuous and non negative, $f=0 \ \nu-a.s.$ Hence $\mathcal A \vartheta = |\sigma^*  \nabla \varphi |^2 - \nu( |\sigma^*  \nabla \varphi |^2)\ \nu-a.s.$
\end{remark}

\section{Main results}
\label{MR}
\setcounter{equation}{0}

Our main contribution consists in establishing a concentration inequality whose variance matches asymptotically the \textit{carré du champ}, see \eqref{Bhat} and Theorem \ref{theo} in what we called the regime of \textit{Gaussian deviations}. 
In the numerical part of \cite{hon:men:pag:16}, we see that changing the bound $\nu(\|\sigma\|^2) \| \nabla \varphi\|_{\infty}^2$ by the \textit{carré du champ}, leads to bounds much closer to the realizations. 
Here, we state a simple and ``sharp" inequality.

\begin{theo}[Sharp non-asymptotic deviation results]\label{THM_CARRE_CHAMPS}
Assume \A{A} is in force. Suppose that there exists $\vartheta \in \mathcal C^{3,\beta}(\R^d,\R)$ satisfying \A{T${}_\beta $} for some $\beta\in(0,1] $ s.t. 
\begin{equation}\label{Poisson}
\mathcal A \vartheta= |\sigma^*  \nabla \varphi |^2 -\nu(|\sigma^*  \nabla \varphi |^2).
\end{equation}
Then, for $ \theta \in (\frac{1}{2+\beta}, 1 ]$, there exist explicit non-negative sequences 
$( c_n)_{n\ge 1}$ and $( C_n)_{n\ge 1}$, respectively increasing and decreasing for $n$ large enough, with $\lim_n  C_n = \lim_n c_n =1$ 
s.t. for all $n\ge 1$, $a>0$ satisfying $\frac{a }{\sqrt{\Gamma_n}} \rightarrow 0$ (\textit{Gaussian deviations}), the following bound holds:
\begin{equation*}
\P\big[ |\sqrt{\Gamma_n}\nu_n( \mathcal{A} \varphi )| \geq a \big] 
\leq 2\,  C_n \exp \big(\! -  c_n\frac{a^2}{2 \nu(|\sigma^*  \nabla \varphi |^2)}
\big). 
\end{equation*}
\end{theo}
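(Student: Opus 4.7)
The plan is to refine the Laplace-transform/Chernoff strategy behind Theorem~\ref{THM_COBORD}. The sharpening from the bound $\nu(\|\sigma\|^2)\|\nabla\varphi\|_\infty^2$ down to the \textit{carré du champ} $\nu(|\sigma^*\nabla\varphi|^2)$ will come from two ingredients: keeping $|\sigma^*\nabla\varphi(X_{k-1})|^2$ intact in the conditional sub-Gaussian variance produced by \A{GC} (rather than majorizing it by $\|\sigma\|_\infty^2\|\nabla\varphi\|_\infty^2$), and using the new coboundary identity \eqref{Poisson} to replace the resulting \emph{random} carré du champ by its $\nu$-average. Fixing $\lambda>0$ to be optimized and setting $S_n:=\sqrt{\Gamma_n}\,\nu_n(\mathcal A\varphi)$, the starting point is the Markov bound $\P[S_n\ge a]\le e^{-\lambda a}\,\E[e^{\lambda S_n}]$ together with the telescopic identity obtained by summing the Taylor expansions of $\varphi(X_k)-\varphi(X_{k-1})$ along the scheme \eqref{scheme}:
\[
\Gamma_n\,\nu_n(\mathcal A\varphi)=\varphi(X_n)-\varphi(X_0)-M_n-R_n,
\]
where $M_n$ is the $(\F_k)$-martingale whose leading increment is $\sqrt{\gamma_k}\,\nabla\varphi(X_{k-1})\cdot\sigma(X_{k-1})\,U_k$ and $R_n$ collects the higher-order pieces driven by $U_k^{\otimes p}$, $p\ge 2$, which, thanks to \A{T${}_\beta$}, are of magnitude $O(\gamma_k^{3/2})$ with a $\beta$-Hölder residue on $D^3\varphi$.

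The second step is the conditional computation. For each $k$, applying \A{GC} to the $\sqrt{\gamma_k}\,|\sigma^*\nabla\varphi(X_{k-1})|$-Lipschitz map $u\mapsto\sqrt{\gamma_k}\,\nabla\varphi(X_{k-1})\cdot\sigma(X_{k-1})\,u$ yields
\[
\E\!\left[\exp\!\left(\tfrac{\lambda}{\sqrt{\Gamma_n}}\sqrt{\gamma_k}\,\nabla\varphi(X_{k-1})\!\cdot\!\sigma(X_{k-1})\,U_k\right)\Big|\,\F_{k-1}\right]\le \exp\!\left(\tfrac{\lambda^2\gamma_k}{2\Gamma_n}\,|\sigma^*\nabla\varphi(X_{k-1})|^2\right).
\]
Iterating the tower property from $k=n$ down to $k=1$, absorbing the higher-order martingale pieces and $R_n$ into a sequence $\mathscr R_n\to 1$ (this is where the step condition $\theta>1/(2+\beta)$ enters, ensuring $\Gamma_n^{-1/2}\sum_k\gamma_k^{(3+\beta)/2}\to 0$), and using \A{C1} together with \A{$\mathcal L_V$} and condition iii) of \A{T${}_\beta$} to absorb the boundary contribution $\tfrac{\lambda}{\sqrt{\Gamma_n}}\!\left(\varphi(X_n)-\varphi(X_0)\right)$ into the same $\mathscr R_n$, one reaches
\[
\E[e^{\lambda S_n}]\le \mathscr R_n\,\E\!\left[\exp\!\left(\tfrac{\lambda^2}{2}\,\nu_n(|\sigma^*\nabla\varphi|^2)\right)\right].
\]

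The decisive third step uses \eqref{Poisson} to split $\nu_n(|\sigma^*\nabla\varphi|^2)=\nu(|\sigma^*\nabla\varphi|^2)+\nu_n(\mathcal A\vartheta)$, factoring out the deterministic carré du champ in the exponent. Repeating the previous telescopic/martingale analysis with $\vartheta$ in place of $\varphi$ (which is licit since $\vartheta$ is supposed to satisfy \A{T${}_\beta$}), and invoking a crude bound of the type of Theorem~\ref{THM_COBORD} applied to $\vartheta$, shows that $\E[\exp(\tfrac{\lambda^2}{2}\,\nu_n(\mathcal A\vartheta))]\le \mathscr R_n'$ for some $\mathscr R_n'\to 1$, provided $\lambda^2/\sqrt{\Gamma_n}\to 0$. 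The final Chernoff optimization yields $\lambda\asymp a/\nu(|\sigma^*\nabla\varphi|^2)$, so the required smallness of $\lambda^2/\sqrt{\Gamma_n}$ is exactly the regime $a=o(\sqrt{\Gamma_n})$ stated in the theorem, and the optimized Laplace inequality gives the announced bound $2\,C_n\exp(-c_n a^2/(2\nu(|\sigma^*\nabla\varphi|^2)))$. The main obstacle I anticipate is the bookkeeping of two stacked layers of remainders---the Taylor residues of both $\varphi$ and $\vartheta$ together with the boundary terms $\varphi(X_n)$ and $\vartheta(X_n)$---whose tails must be controlled uniformly in $n$ by \A{C1}--\A{$\mathcal L_V$}-type sub-Gaussian estimates so that they collapse into the single pair $(c_n,C_n)$ with explicit convergence rates, while preserving the sharp constant $\nu(|\sigma^*\nabla\varphi|^2)$ in the exponent.
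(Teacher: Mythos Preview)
Your overall strategy---keep the pointwise conditional variance $|\sigma^*\nabla\varphi(X_{k-1})|^2$ instead of bounding it by $\|\sigma\|_\infty^2\|\nabla\varphi\|_\infty^2$, then use the auxiliary Poisson equation \eqref{Poisson} to trade the random empirical \textit{carr\'e du champ} $\nu_n(|\sigma^*\nabla\varphi|^2)$ for its $\nu$-mean plus $\nu_n(\mathcal A\vartheta)$---is exactly the paper's. The gap is in your second step: the displayed inequality
\[
\E[e^{\lambda S_n}]\le \mathscr R_n\,\E\Big[\exp\Big(\tfrac{\lambda^2}{2}\,\nu_n(|\sigma^*\nabla\varphi|^2)\Big)\Big]
\]
does \emph{not} follow from ``iterating the tower property''. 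After applying \A{GC} at level $k$ you produce the factor $\exp\big(\tfrac{\lambda^2\gamma_k}{2\Gamma_n}|\sigma^*\nabla\varphi(X_{k-1})|^2\big)$, which is $\F_{k-1}$-measurable but \emph{not} $\F_{k-2}$-measurable; at the next step it is coupled with $\Delta_{k-1}(X_{k-2},U_{k-1})$ through $U_{k-1}$, so you cannot simply pull it out and continue. Any decoupling of the martingale from its (random) bracket costs a H\"older exponent: writing $e^{\mu M_n}=\big(e^{\rho\mu M_n-\frac{(\rho\mu)^2}{2}V_n}\big)^{1/\rho}\big(e^{\frac{\rho^2\mu^2}{2(\rho-1)}V_n}\big)^{(\rho-1)/\rho}$ and using that the first factor is a supermartingale yields a $\rho>1$ in front of the deterministic part $\nu(|\sigma^*\nabla\varphi|^2)$ and a $\tfrac{\rho^2}{\rho-1}$ in front of $\nu_n(\mathcal A\vartheta)$. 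This is precisely the splitting $\mathscr T_1,\mathscr T_2$ in \eqref{Def_T_S}--\eqref{T1_user}.

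The consequence is that your ``final Chernoff optimization'' is not a quadratic in $\lambda$ but the quartic $P(\lambda)=-\tfrac{a\lambda}{\sqrt{\Gamma_n}}+\tfrac{\lambda^2}{\Gamma_n}A_n(\rho)+\tfrac{\lambda^4}{\Gamma_n^3}B_n(\rho)$ of \eqref{ineq_P}, with $A_n(\rho)\sim\rho\,\nu(|\sigma^*\nabla\varphi|^2)/2$ and $B_n(\rho)\asymp \tfrac{\rho^3}{\rho-1}\|\sigma\|_\infty^2[\vartheta]_1^2$. Recovering the sharp constant requires $\rho\downarrow 1$, but then $B_n(\rho)\uparrow\infty$; the paper resolves this tension by optimizing jointly in $(\lambda,\rho)$ via Cardan's formula (Lemma~\ref{RHO}), and the choice $\rho-1\asymp a/\sqrt{\Gamma_n}$ is exactly what makes the quartic term negligible while $A_n(\rho)\to \nu(|\sigma^*\nabla\varphi|^2)/2$. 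Your condition ``$\lambda^2/\sqrt{\Gamma_n}\to 0$'' is the right intuition for why the $\vartheta$-layer is a remainder, but without the $\rho$-parameter and its calibration you cannot reach the constant $\tfrac12\nu(|\sigma^*\nabla\varphi|^2)$. A secondary point: the Lipschitz modulus of the full increment $\psi_k(X_{k-1},\cdot)$ is $\sqrt{\gamma_k}\,|\sigma_{k-1}^*\nabla\varphi(X_k)|$ (value at $X_k$, not $X_{k-1}$, see \eqref{DERIV_PSI}); passing to $X_{k-1}$ generates an extra $\gamma_k^2\sqrt{V_{k-1}}$ contribution (cf.\ \eqref{U_bounded_Lip}) that the paper isolates as $\mathscr T_3$, rather than being a ``higher-order martingale piece'' absorbable into $\mathscr R_n$.
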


\begin{remark}
We obtain the optimal Gaussian bound with the \textit{carré du champ} as variance which corresponds to CLT. 
This is asymptotically the sharpest result that we can expect. 
This inequality is very important for confidence intervals, as in this context $a$ is supposed to be ``small", i.e. bounded.
Under suitable regularity assumptions on $f$, which guarantee that the function $\varphi $ solving $\mathcal A \varphi= f-\nu(f) $ satisfies \A{T${}_\beta $} for some $\beta\in (0,1] $, it readily follows from Theorem \ref{THM_CARRE_CHAMPS} that:
\begin{equation*}
\P \Big [\nu(f)\in \big[\nu_n(f)-\frac{a}{\sqrt{\Gamma_n}},\nu_n(f)+\frac{a }{\sqrt{\Gamma_n}} \big] \Big]\ge 1-2  C_n\exp \big(-  c_n\frac{a^2}{2\nu(|\sigma^*  \nabla \varphi |^2)}
\big).
\end{equation*}
The conditions on $f$ that lead to the required smoothness on $\varphi $ and $\vartheta $ are discussed in Section \ref{sec_Reg} (see in particular Theorem \ref{TH_BOOTSTRAP} and Corollary \ref{COROL_BOOTSTRAP_CC}). Briefly, it suffices to consider that, additionally to \A{C2} and \A{${\mathbf{{\mathcal L}_V} } $}, $\Sigma$ is also uniformly elliptic,  $b\in {\mathcal C}^{1,\beta}(\R^d,\R^d),\sigma \in  {\mathcal C}^{1,\beta}(\R^d,\R^d\otimes \R^d)$ and that the source $f\in {\mathcal C}^{1,\beta}(\R^d,\R^d)$. This last assumption on $f$ can be weakened to Lipschitz continuous (see Theorem \ref{THEO_CTR_LIP}) with some restriction on the steps.

\end{remark}


\subsection{User's guide to the proof}\label{USER_GUIDE}
Recall that, for a fixed given $n \in \N$ and $\varphi \in \mathcal C^{3,\beta}(\R^d,\R)$, we want to estimate the quantity $$\P[\sqrt{\Gamma_n} |\nu_n(\mathcal A \varphi)| \geq a ], \ \forall a >0, $$
where $\nu_n(\mathcal A \varphi )=\frac 1{\Gamma_n} \sum_{k=1}^n \gamma_k \mathcal A \varphi(X_{k-1})$. We focus below on the term $\P[\sqrt{\Gamma_n} \nu_n(\mathcal A \varphi) \geq a ] $. Indeed, the contribution $\P[\sqrt{\Gamma_n} \nu_n(\mathcal A \varphi) \leq -a] $ can be handled by symmetry.

The first step of the proof consists in writing $\big(\mathcal A \varphi(X_{k-1})\big)_{k\in \leftB 1,n \rightB }$ with a splitting method to isolate the terms depending on the current innovation $U_k$ for $\mathcal A \varphi(X_{k-1})$. This is done in Lemma \ref{decomp_nu} below.
Precisely, for all $k \in \leftB 1,n \rightB$ and $\varphi \in \mathcal C^{3,\beta}( \R^d, \R)$ we prove that:
\begin{eqnarray}\label{Taylor}
\varphi ( X_k) - \varphi ( X_{k-1})  &=& \gamma_k \mathcal{A} \varphi(X_{k-1}) + \textcolor{black}{\gamma_k \int_0^1 \langle \nabla  \varphi (X_{k-1} + t \gamma_k b_{k-1})-\nabla \varphi(X_{k-1}),b_{k-1}\rangle dt}
\nonumber \\
&& + \frac 12 \gamma_k\, \Tr\Big( \big(D^2 \varphi ( X_{k-1} + \gamma_k b_{k-1} ) - D^2\varphi( X_{k-1}) \big) \Sigma_{k-1}\Big) +  \psi_k(X_{k-1}, U_k),
\end{eqnarray}
where
\begin{eqnarray}
\label{DEF_PSI_K} 
 \psi_k(X_{k-1}, U_k) &=&\sqrt{\gamma_k} \sigma_{k-1} U_k \cdot \nabla \varphi( X_{k-1} +\gamma_k b_{k-1} ) \nonumber  \\
& &+\; \gamma_k \int_0 ^1 ( 1-t) \Tr\Big ( D^2\varphi ( X_{k-1} +\gamma_k b_{k-1} + t \sqrt{\gamma_k} \sigma_{k-1} U_k ) \sigma_{k-1}U_k\otimes U_k \sigma_{k-1}^* 
\nonumber \\
&&- D^2\varphi ( X_{k-1} +\gamma_k b_{k-1} ) \Sigma_{k-1} \Big )dt.
\end{eqnarray}

 Observe that the term $\psi_k(X_{k-1},U_k)$ in the r.h.s. of \eqref{Taylor} is the only term containing the current innovation $U_k$. 
Thus, the mapping $u \mapsto \psi_k(X_{k-1},u)$ is Lipschitz continuous, because $\varphi$ is.
 
  This property is crucial to proceed with a martingale increment technique. 
Indeed, introducing the compensated increment $\Delta_k(X_{k-1},U_k):= \psi_k(X_{k-1},U_k) - \E[ \psi_k(X_{k-1},U_k) | \F_{k-1}]$, assumption \A{GC} allows to derive: 
  \begin{equation}\label{CTR_EC}
  \forall \lambda >0, \ \E[\exp(-\lambda \Delta_k(X_{k-1},U_k))|\F_{k-1}]\le \exp \big (\frac{\lambda^2[\psi(X_{k-1},\cdot)]_1^2}{2} \big). 
  \end{equation}
  The corner stone of the proof is then to apply recursively this control to the martingale $M_m:= \sum_{k=1}^m \Delta_k(X_{k-1},U_k)$, $m \in \leftB 1,n \rightB$.
\\

To control the deviation, the first step is an exponential inequality which combined to \eqref{Taylor} yields:
\begin{eqnarray}\label{Exp_Markov}
\P[\sqrt{\Gamma_n} \nu_n(\mathcal A \varphi) \geq a ] &\leq& \exp \big (-\frac{a \lambda}{\sqrt \Gamma_n} \big ) \E\Big[ \exp(\lambda \nu_n({\mathcal A} \varphi))\Big]\notag\\
& \leq& \exp \big (-\frac{a \lambda}{\sqrt \Gamma_n} \big ) \E \Big [\exp \big ( -\frac{\lambda q M_n}{\Gamma_n} \big ) \Big ]^{1/q} \mathcal R_n,
\end{eqnarray}
 for $\lambda >0$, $q>1$ and $  \mathcal  R_n$ is a remainder (whose behaviour is investigated in Lemma \ref{rn_choice}).
The main contribution in the above equation is the one involving $M_n$ which can be analyzed thanks to \eqref{CTR_EC}.
Namely:
\begin{eqnarray}
 \E \Big [\exp \big (- \frac{ q \lambda}{\Gamma_n} M_n \big) \Big ] &=& 
\E \Big [\exp \big ( -\frac{ q \lambda}{\Gamma_n} M_{n-1}  \big) \E \Big [\exp \big ( -\frac{ q \lambda}{\Gamma_n} \Delta_n(X_{n-1},U_n) \big) \Big |\F_{n-1} \big ] \Big ] 
\nonumber\\
&\le &\E \Big [\exp \big (- \frac{ q \lambda}{\Gamma_n} M_{n-1}  \big) \exp \big (\frac{q^2\lambda^2[\psi(X_{n-1},\cdot)]_1^2}{2\Gamma_n^2}  \big) \Big ].\label{PREAL_LIP}
\end{eqnarray}
A first approach	in \cite{hon:men:pag:16}, in order to iterate the estimates involving the conditional expectations, consisted in bounding uniformly $[\psi(X_{n-1},\cdot)]_1 \le \sqrt{\gamma_n} \| \sigma \|_{\infty} \| \nabla \varphi \|_{\infty}$ (which is easily deduced from \eqref{Taylor}). Iterating the procedure led to the estimate 
\begin{equation}
\label{BD_GROSS}
\P[\sqrt{\Gamma_n} \nu_n(\mathcal A \varphi) \geq a ] \leq  \exp \big (-\frac{a \lambda}{\sqrt \Gamma_n} \big) \exp \big( \frac{ q \lambda^2}{2\Gamma_n}\|\sigma\|_{\infty}^2 \| \nabla \varphi \|_{\infty}^2 \big) \mathcal R_n.
\end{equation}
Optimizing over $\lambda$, letting as well $q\downarrow_n 1 $ in a suitable way, gives the deviation upper-bound $ C_n \exp ( - c_n \frac{a^2}{2\|\sigma\|_{\infty}^2 \| \nabla \varphi \|_{\infty}^2} )$, with $ C_n,  c_n>0$ respectively increasing and decreasing to $1$ with $n$ (see Theorem 2 of \cite{hon:men:pag:16} for details).
\\

To obtain the expected variance corresponding to the \textit{carr\'e du champ} $\nu( | \sigma^*\nabla \varphi |^2)$,
the key point is to control finely the Lipschitz modulus of $ \psi_k(X_{k-1}, \cdot)$ in \eqref{CTR_EC}, \eqref{PREAL_LIP}.

From \eqref{Taylor}, we get the following simple expression of the derivative $ \nabla_u \psi_k(X_{k-1}, u)|_{u=U_k}=  \sqrt{\gamma_k} \sigma_{k-1}^* \nabla \varphi(X_{k})$. Hence, there is a remainder term $\mathcal R(\gamma_k,X_{k-1},U_k)$ \textcolor{black}{and a constant $C_{\eqref{EXPR_GRAD}}=C_\eqref{EXPR_GRAD}(\A{A})>0$} s.t.
\begin{equation}
\label{EXPR_GRAD}
|\nabla_u \psi_k(X_{k-1}, u) |^2|_{u=U_k}= \gamma_k |\sigma_{k-1}^* \nabla \varphi_{k-1}|^2 \textcolor{black}{+C_\eqref{EXPR_GRAD}\gamma_k^2 \sqrt{V_{k-1}}}+ \mathcal R(\gamma_k,X_{k-1},U_k),
\end{equation}
for more details see \eqref{omega1_LIP} below.

In order to exhibit for each evaluation	of the conditional expectations in \eqref{CTR_EC}, the contribution $\nu( | \sigma^* \nabla \varphi |^2)$, we use the auxiliary Poisson problem:
\begin{equation}\label{Poisson_eq}
\mathcal A \vartheta = | \sigma^* \nabla \varphi |^2 - \nu(| \sigma^* \nabla \varphi |^2).
\end{equation}
 We then write for the main term to control in \eqref{Exp_Markov}, 
\begin{equation}\label{Mn_ineq_T1T2}
\E[\exp(- \frac{\lambda q M_n}{\Gamma_n}) ] \le {\mathscr T}_1 ^{\frac{1}{\rho}} \textcolor{black}{{\mathscr T}_2^{\frac {\rho-1}{\hat q \rho}}{\mathscr T}_3^{\frac {\rho-1}{\hat p\rho}}},
\end{equation}
 for $\rho>1$, \textcolor{black}{$\hat p, \hat q>1$ s.t. $\frac 1 {\hat p} + \frac{1}{\hat q}=1$} where:
\begin{eqnarray}\label{Def_T_S}
{\mathscr T}_1 &:=& \E\exp \big (- \rho \frac{q\lambda}{\Gamma_n}M_n-\frac{\rho^2 q^2 \lambda^2}{2\Gamma_n^2}\sum_{k=1}^n \gamma_k{\mathcal A}\vartheta(X_{k-1})- C_{\eqref{EXPR_GRAD}} \gamma_k^{2} \sqrt{V_{k-1}}  \big),
\nonumber \\
{\mathscr T}_2 &:=& \E\exp \big (
\frac{\lambda^2 q^2 \rho^2 \hat q }{2(\rho-1)\Gamma_n^2}\sum_{k=1}^n \gamma_k{\mathcal A}\vartheta(X_{k-1})\big),
\nonumber \\
{\mathscr T}_3 &:=& \E\exp \big (
\frac{\lambda^2 q^2 \rho^2 \hat p}{2(\rho-1)\Gamma_n^2}\sum_{k=1}^n C_{\eqref{EXPR_GRAD}}\gamma_k^{2}  \sqrt {V_{k-1}}\big).
\end{eqnarray}
Exploiting \eqref{Poisson_eq}, we can now rewrite
\begin{eqnarray}\label{T1_user}
{\mathscr T}_1 &=& \E\exp \bigg ( -\rho \frac{q\lambda}{\Gamma_n}M_n-\frac{\rho^2 q^2 \lambda^2}{2\Gamma_n^2}\sum_{k=1}^n \Big ( \gamma_k \big  [ |\sigma^*  \nabla \varphi (X_{k-1}) |^2-\nu(| \sigma^*  \nabla \varphi|^2)  \big] +C_{\eqref{EXPR_GRAD}} \gamma_k^{2} \sqrt{V_{k-1}}  \Big )  \bigg)
\nonumber \\
 &=& \exp \Big ( \frac{\rho^2 q^2 \lambda^2}{2\Gamma_n}\nu(| \sigma^*  \nabla \varphi|^2)   \Big)
\E\exp \Big (- \rho \frac{q\lambda}{\Gamma_n}M_n-\frac{\rho^2 q^2 \lambda^2}{2\Gamma_n^2} \sum_{k=1}^n \big ( \gamma_k |\sigma^*  \nabla \varphi (X_{k-1})|^2 +C_{\eqref{EXPR_GRAD}} \gamma_k^{2} \sqrt{V_{k-1}}  \big )   \Big).
\nonumber \\
\end{eqnarray}
The first term in the above r.h.s. yields the expected variance when we optimize over $\lambda$ for $q$ and $\rho$ going to $1$, which is the case in the regime of so called \textit{Gaussian deviations} in Theorem \ref{THM_CARRE_CHAMPS}. It improves the previous bound \eqref{BD_GROSS}.
Introduce now  for $m \in \leftB 1,n\rightB$,
\begin{equation}\label{Def_S}
S_m := \exp \Big ( - \frac{\rho q\lambda}{\Gamma_n}M_m-\frac{\rho^2 q^2 \lambda^2}{2\Gamma_n^2}\sum_{k=1}^m \big ( \gamma_k |\sigma^*  \nabla \varphi (X_{k-1})|^2 +C \gamma_k^{2} \sqrt{V_{k-1}}  \big )   \Big).
\end{equation}
Bringing to mind that $M_m= \sum_{k=1}^m \Delta_k(X_{k-1},U_k)$, where $\E[ \Delta_k(X_{k-1},U_k) | \F_{k-1} ] = 0$ and $[\Delta_k(X_{k-1}, \cdot)]_1 = [\psi_k(X_{k-1}, \cdot)]_1$, we get from \eqref{EXPR_GRAD}, that, up to the remainder term $(\mathcal R(\gamma_k,X_{k-1},U_k))_{k \in \leftB 1,n \rightB}$, $S_m$ can be viewed as a super martingale (see Lemma \ref{CTR_T1_N} for details).  
We actually rigorously show that, in the Gaussian regime (i.e. for $\frac a {\sqrt{\Gamma_n}} \to 0$), for $\theta \in (1/3,1)$ $$\E[S_n]^{\frac 1{\rho q}} \le  {\mathscr R}_n \underset{n \to + \infty}{\longrightarrow} 1.$$ For $\theta=1$, or for \textit{super Gaussian deviations}  (i.e. for $\frac a {\sqrt{\Gamma_n}} \to + \infty$, see Section \ref{sec_optim}) with $\theta \in (1/3,1)$ we get:  
\begin{equation*}
\E[S_n]^{\frac 1{\rho q}} \le  {\mathscr R}_n \exp \Big (  \big(\frac{\rho q \lambda^2}{\Gamma_n}+\frac{ \rho^3 q^3 \lambda^4}{(\rho-1)\Gamma_n^3}  \big) e_n \Big ) ,
\end{equation*}
 where $e_n>0$ decreases to $0$ with $n$ and $ {\mathscr R}_n>0$ is still going to $1$ with $n$.
The difficulty in the above control is that the optimized $\lambda$ also depends on $n$ and $\rho$ (see  \eqref{lambda} below).
\\

The second term $\mathscr T_2$ is estimated directly repeating the arguments of the proof of Theorem 2 in \cite{hon:men:pag:16} which are recalled above (see equations \eqref{Exp_Markov} to \eqref{BD_GROSS}). We apply the previous martingale increment technique that previously led to \eqref{BD_GROSS}. Denoting by $M_n^\vartheta$ the martingale associated with the $\big(\psi_k^\vartheta(X_{k-1},U_k)\big)_{k\in \leftB 1,n\rightB}$ deriving from the expansion of $\mathcal A \vartheta$ similarly to \eqref{Taylor}, we obtain:
\begin{equation}\label{T2_user}
 \mathscr T_2 \leq  \E \Big [\exp \big (- \frac{\lambda^2 q^2 \rho^2 \bar q M_n^{\vartheta}}{2(\rho-1)\Gamma_n^2} \big ) \Big ]^{1/\bar q} \mathcal R_n^{\vartheta} 
\leq \exp \Big ( \frac{\lambda^4 q^4 \rho^4 \bar q}{8(\rho-1)^2\Gamma_n^3} \|\sigma\|_{\infty}^2 \|\nabla \vartheta\|_{\infty}^2 \Big)   \mathcal R_n^{\vartheta},
\end{equation}
for $\bar q>1$ and where the superscript $\vartheta$ means that we only need to replace $\varphi$ by $\vartheta$ in the previous definitions. 
Like in \eqref{Exp_Markov}, $  \mathcal R_n^{\vartheta}$ is here a remainder.

\textcolor{black}{The third component $\mathscr T_3$ is first controlled by Jensen inequality (over the exponential function and the measure is $\frac{1}{\Gamma_n^{(3)}}\sum_{k=1}^n \gamma_k^{2} \delta_k$):
\begin{equation}\label{ineq_T3}
{\mathscr T}_3 \leq \frac{1}{\Gamma_n^{(2)}}\sum_{k=1}^n \gamma_k^{2} 
\E \exp \big (
\frac{\lambda^2 q^2 \rho^2 \hat p \Gamma_n^{(2)}}{2(\rho-1)\Gamma_n^2}C 
\sqrt {V_{k-1}}\big).
\end{equation}
For the control of this term (as well for remainders from Taylor expansion in \eqref{Taylor} and in Lemma \ref{decomp_nu}), we recall a useful result from \cite{hon:men:pag:16} (see Proposition 1 therein). 
Under \A{A}, there is a constant $c_V:=c_V(\A{A}) >0$ such that for all $\lambda\in[0,c_V] $, $\xi \in [0,1]$: 
\begin{equation}\label{expV_int}
I_V^\xi:=\sup_{n \geq 0 } \E[\exp(\lambda V_n^{\xi})] < + \infty.
\end{equation}
We  also refer to Lemaire (see~\cite{lemaire:hal-00004266}) for additional integrability results of the Lyapunov functions in a more general framework.
The identity \eqref{ineq_T3} is handled by Young inequality
\begin{equation*}
{\mathscr T}_3 \leq \frac{ \exp \big ( \frac{1}{2c_V} (\frac{\lambda^2 q^2 \rho^2 \hat p \Gamma_n^{(2)}}{2(\rho-1)\Gamma_n^2}C )^2 \big )}{\Gamma_n^{(2)}}\sum_{k=1}^n \gamma_k^{2}  
 \E\exp \big (c_V V_{k-1}\big)
 =  \frac{ \exp \big ( \frac{\lambda^4}{\Gamma_n^3} e_n \big )}{\Gamma_n^{(2)}}\sum_{k=1}^n \gamma_k^{2}  
 \E\exp \big (c_V V_{k-1}\big),
\end{equation*}
with $\hat{p} \to_n + \infty$ s.t. for fixed $\rho,q>1$, $e_n =  \frac{1}{2c_V} (\frac{ q^2 \rho^2 \hat p }{16 c_V^2(\rho-1)^2}\frac{\Gamma_n^{(2)}}{\Gamma_n^2}C)^2 \rightarrow_n 0$, note that for all $\theta \in (\frac{1}{3},1]$, $\frac{\Gamma_n^{(2)}}{\sqrt{\Gamma_n}} \to_n0$. 
We obtain then by \eqref{expV_int}:
\begin{equation}\label{T3_user}
{\mathscr T}_3^{\frac{\rho-1}{\hat p \rho}} \leq 
  \exp \big ( \frac{\lambda^4}{\Gamma_n^3} e_n \big )(I_V^1)^{\frac{\rho-1}{\hat p \rho}}= \mathscr R_n \exp \big ( \frac{\lambda^4}{\Gamma_n^3} e_n \big ) ,
\end{equation}
for $\hat p= \hat p(n) \to_n+ \infty$.
}
 
 Eventually, by \eqref{Exp_Markov}, \eqref{T1_user}, \eqref{T2_user} and \eqref{T3_user} with the different controls of $\E[S_n]$ (see Lemma \ref{CTR_T1_N}): 
\begin{equation}\label{Ineq_P}
\P[\sqrt{\Gamma_n} \nu_n(\mathcal A \varphi) \geq a ] \leq \exp \Big (- \frac{ a\lambda}{\sqrt{\Gamma_n}}+\frac{\lambda^2}{\Gamma_n}A_n(\rho)+\frac{\lambda^4}{\Gamma_n^3}B_n (\rho) \Big){\mathscr R}_n,
\end{equation}
with ${\mathscr R}_n \rightarrow 1$, $A_n(\rho):= \rho (\frac{q \nu(| \sigma^*  \nabla \vartheta |^2)}{2}+e_n), B_n:=\frac{\rho^3}{\rho-1} \frac{q^3 \hat q}{4} ( \frac{\bar q \|\sigma \|_{\infty}^2 \|\nabla \vartheta \|_{\infty}^2}{2}+ e_n)$ for $e_n >0$ decreasing to $0$ with $n$.

We perform an optimization over $\lambda$ with the Cardan method. However, the optimal choice of $\lambda$ depends on $\rho$. 
So an optimization can be done for $\rho$ too. In Lemma \ref{RHO} below, we choose $\rho$ for the regime of \textit{Gaussian deviations}  (i.e. $\frac{a}{\sqrt {\Gamma_n}} \underset n \to 0$) which yields:
\begin{equation*} 
\P\big[ |\sqrt{\Gamma_n}\nu_n( \mathcal{A} \varphi )| \geq a \big] 
\leq 2\, C_n \exp
\Big(\! - c_n\frac{a^{2} }{2\nu( |\sigma^* \nabla \varphi |^2)}
\Big),
\end{equation*}
 for $c_n,C_n>0$ respectively decreasing and increasing (for $n$ big enough) to $1$ with $n$.

The optimal choices of $\lambda$ and $\rho$ for the regime of  \textit{super Gaussian deviations} (i.e. $\frac{a}{\sqrt {\Gamma_n}} \underset n \to + \infty$) is eventually discussed in \textcolor{black}{ Section \ref{sec_optim}}. This leads to 
\begin{equation*}
\P\big[ |\sqrt{\Gamma_n}\nu_n( \mathcal{A} \varphi )| \geq a \big] 
\leq 2\, C_n \exp \Big (\! - c_n\frac{a^{4/3} \Gamma_n^{1/3}}{2 \|\sigma\|_{\infty}^{2/3} \| \nabla \vartheta\|_{\infty}^{2/3}}
\Big). 
\end{equation*}

\subsection{Technical lemmas and Proof of the Main Results}

We first give a decomposition lemma of $\nu_n({\mathcal A}\varphi) $ which is the starting point of our analysis. Its proof can be found in \cite{hon:men:pag:16} (see Lemma 1 therein). 
\begin{lemme}[Decomposition of the empirical measure]
\label{decomp_nu}
For all  $n \ge 1$, $k \in \leftB 1 , n\rightB$ and $\varphi \in \mathcal C^{2}( \R^d, \R)$, the identity \eqref{Taylor} holds and we have:
\begin{eqnarray}\label{Taylor_Sum}
\Gamma_n \nu_n(\mathcal{A} \varphi) &=& \varphi ( X_n) - \varphi ( X_0) - \Big[ \sum_{k=1}^n \textcolor{black}{\gamma_k \int_0^1 \langle \nabla  \varphi (X_{k-1} + t \gamma_k b_{k-1})-\nabla \varphi(X_{k-1}),b_{k-1}\rangle dt} 
\nonumber \\
&& 
+ \frac 12\sum_{k=1}^n \gamma_k\, \Tr\Big( \big(D^2 \varphi ( X_{k-1} + \gamma_k b_{k-1} ) - D^2\varphi( X_{k-1}) \big) \Sigma_{k-1}^2\Big) + \sum_{k=1}^n \psi_k(X_{k-1}, U_k) \Big],
\end{eqnarray}
where $ \psi_k(X_{k-1}, U_k)$ is defined in \eqref{DEF_PSI_K}.
\end{lemme}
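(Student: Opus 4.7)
The plan is to establish the pointwise identity \eqref{Taylor} first, and then the sum formula \eqref{Taylor_Sum} follows by telescoping and rearrangement. Since both statements are purely deterministic, given the scheme \eqref{scheme}, the whole argument reduces to a careful second-order Taylor expansion of $\varphi$ along the decomposition $X_k = X_{k-1} + \gamma_k b_{k-1} + \sqrt{\gamma_k}\,\sigma_{k-1} U_k$, splitting the increment into its drift and noise parts so that the current innovation $U_k$ is isolated inside $\psi_k$.

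I would first expand $\varphi(X_k)-\varphi(X_{k-1}+\gamma_k b_{k-1})$ using the second-order Taylor formula with integral remainder around $X_{k-1}+\gamma_k b_{k-1}$, with increment $\sqrt{\gamma_k}\sigma_{k-1}U_k$. This produces the martingale-type linear term $\sqrt{\gamma_k}\,\sigma_{k-1}U_k\!\cdot\!\nabla\varphi(X_{k-1}+\gamma_k b_{k-1})$ together with $\gamma_k\int_0^1(1-t)\Tr\!\left(D^2\varphi(X_{k-1}+\gamma_k b_{k-1}+t\sqrt{\gamma_k}\sigma_{k-1}U_k)\,\sigma_{k-1}U_k\otimes U_k\sigma_{k-1}^*\right)dt$. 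Next, I would handle $\varphi(X_{k-1}+\gamma_k b_{k-1})-\varphi(X_{k-1})$ by the first-order Taylor formula, writing it as $\gamma_k\langle \nabla\varphi(X_{k-1}),b_{k-1}\rangle+\gamma_k\int_0^1\langle \nabla\varphi(X_{k-1}+t\gamma_k b_{k-1})-\nabla\varphi(X_{k-1}),b_{k-1}\rangle dt$.

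The core algebraic step is then to add and subtract two Hessian terms in order to reconstruct $\gamma_k\mathcal{A}\varphi(X_{k-1})$ and, simultaneously, match the exact form of $\psi_k$ in \eqref{DEF_PSI_K}. Concretely, I add and subtract $\tfrac12\gamma_k\Tr(D^2\varphi(X_{k-1})\Sigma_{k-1})$ to complete $\gamma_k\mathcal{A}\varphi(X_{k-1})=\gamma_k[\langle \nabla\varphi(X_{k-1}),b_{k-1}\rangle+\tfrac12\Tr(D^2\varphi(X_{k-1})\Sigma_{k-1})]$, and then also add and subtract $\tfrac12\gamma_k\Tr(D^2\varphi(X_{k-1}+\gamma_k b_{k-1})\Sigma_{k-1})$ (which equals $\gamma_k\int_0^1(1-t)\Tr(D^2\varphi(X_{k-1}+\gamma_k b_{k-1})\Sigma_{k-1})dt$ since $\int_0^1(1-t)dt=\tfrac12$). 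The first of these add/subtract operations yields the symmetric difference term $\tfrac12\gamma_k\Tr\!\left((D^2\varphi(X_{k-1}+\gamma_k b_{k-1})-D^2\varphi(X_{k-1}))\Sigma_{k-1}\right)$ appearing in \eqref{Taylor}, while the second groups the remaining Hessian contributions into exactly the second line of \eqref{DEF_PSI_K}. This is the only place where care is needed so that what is left over collapses precisely into $\psi_k(X_{k-1},U_k)$; I expect this bookkeeping to be the only nontrivial point of the proof.

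Once \eqref{Taylor} is established, \eqref{Taylor_Sum} is immediate: summing \eqref{Taylor} over $k\in\leftB 1,n\rightB$ makes the left-hand side telescope into $\varphi(X_n)-\varphi(X_0)$, while by definition $\sum_{k=1}^n \gamma_k \mathcal{A}\varphi(X_{k-1})=\Gamma_n\nu_n(\mathcal{A}\varphi)$, recalling the expression \eqref{measure} of the empirical measure. Rearranging to isolate $\Gamma_n\nu_n(\mathcal{A}\varphi)$ gives the claimed identity. Note that the regularity assumption $\varphi\in\mathcal C^2(\R^d,\R)$ is exactly what is required for the second-order Taylor formula with integral remainder used in the first step to be valid, and no further hypothesis from \A{A} is actually needed for this purely algebraic decomposition.
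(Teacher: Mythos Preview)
Your proposal is correct and follows exactly the natural two-step Taylor decomposition (first in the noise increment around $X_{k-1}+\gamma_k b_{k-1}$, then in the drift increment around $X_{k-1}$), with the appropriate add/subtract of the two Hessian terms to reconstruct $\gamma_k\mathcal A\varphi(X_{k-1})$ and the centred form of $\psi_k$ in \eqref{DEF_PSI_K}; the paper does not reprove the lemma but refers to \cite{hon:men:pag:16}, where the argument is precisely the one you describe.
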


\begin{remark}
In spite of the square terms in $U_k$ appearing in the r.h.s. of \eqref{DEF_PSI_K}, we have that, conditionally to $\F_{k-1} $, $ u\mapsto \psi_k ( X_{k-1}, u)$ is Lipschitz continuous.
Indeed, on the r.h.s. $U_k$ only appears in $\psi_k$ and on the l.h.s. we know that $\varphi$ is Lipschitz.
Hence, for all $(u,u')\in (\R^d)^2$:
\begin{equation}\label{Psi_Lipschitz}
|\psi_k ( X_{k-1}, u)-\psi_k ( X_{k-1}, u')|\le \sqrt{\gamma_k} \|\sigma_{k-1}\|\|\nabla \varphi\|_{\infty}|u-u'|.
\end{equation}
Our strategy consists in controlling how far the Lipschitz modulus in \eqref{Psi_Lipschitz} is from $| \sigma_{k-1}^* \nabla \varphi_{k-1} |$. 
The first step is to obtain an explicit derivative of $\psi_k(X_{k-1},\cdot)$, see \eqref{DERIV_PSI} below.
\end{remark}
For notational convenience we introduce, for a given $n\in \N^* $ the following quantities:
\begin{eqnarray}
\label{DECOUP}
R_n&:=&\varphi ( X_n) - \varphi ( X_0) - \sum_{k=1}^n \textcolor{black}{\gamma_k \int_0^1 \langle \nabla  \varphi (X_{k-1} + t \gamma_k b_{k-1})-\nabla \varphi(X_{k-1}),b_{k-1}\rangle dt}
\nonumber \\
&-& \frac 12 \sum_{k=1}^n \gamma_k \Tr\Big( \big(D^2 \varphi ( X_{k-1} + \gamma_k b_{k-1} ) - D^2\varphi( X_{k-1}) \big) \Sigma_{k-1}^2 \Big), 
\nonumber \\
\nonumber \\
M_n &:=&\sum_{k=1}^n  \Delta_k(X_{k-1},U_k), \ \widetilde R_n := R_n -  \sum_{k=1}^n  \E\big[\psi_k(X_{k-1}, U_k)|\,\F_{k-1}\big],
\end{eqnarray}
where for all $k \in \leftB 1, n \rightB$:
\begin{equation}\label{Delta_def}
\Delta_k(X_{k-1},U_k):= \psi_k(X_{k-1}, U_j) - \E\big[\psi_k(X_{k-1}, U_k)|\,\F_{k-1}\big].
\end{equation}
From these definitions, Lemma~\ref{decomp_nu} can be rewritten:
\begin{equation}\label{nu_nA_phi_psi_R}
\nu_n ({\mathcal A}\varphi)=\frac{1}{\Gamma_n}( \widetilde R_n-M_n),
\end{equation}
where $M_n$ is  a martingale.
The key idea of the proof is to control more precisely the Lipschitz modulus of $\psi_n(X_{n-1}, \cdot)$ than it was done in \cite{hon:men:pag:16}. 
From the definition in \eqref{Taylor}, let us write for all $k \in \leftB 1, n \rightB$:
\begin{equation*}
\psi_k(X_{k-1}, U_k) = \varphi_k- \varphi_{k-1} + R_{k-1,k},
\end{equation*}
where 
\begin{eqnarray*}
R_{k-1,k}&:=&- \gamma_k \mathcal A \varphi ( X_{k-1}) - \textcolor{black}{\gamma_k \int_0^1 \langle \nabla  \varphi (X_{k-1} + t \gamma_k b_{k-1})-\nabla \varphi(X_{k-1}),b_{k-1}\rangle dt}
\nonumber \\
&-& \frac 12 \gamma_k \Tr\Big( \big(D^2 \varphi ( X_{k-1} + \gamma_k b_{k-1} ) - D^2\varphi( X_{k-1}) \big) \Sigma_{k-1}^2 \Big) 
.
\end{eqnarray*}
Hence, by derivation
\begin{equation}\label{DERIV_PSI}
\nabla_u \psi_k(X_{k-1}, u)|_{u=U_k} = \sqrt{\gamma_k} \sigma_{k-1}^*  \nabla \varphi(X_{k}).
\end{equation}
We will establish that the value of $\nabla_u \psi_k (X_{k-1},u)|_{u=U_k}$ is not ``too far" from $ \sqrt{\gamma_k} \sigma_{k-1}^* \nabla \varphi(X_{k-1})$.
\subsubsection{Proof of Theorem \ref{THM_CARRE_CHAMPS} for bounded innovations}\label{U_bounded}
We first give the complete proof in this particular case. We will specify the additional required controls for possibly unbounded innovations in the next subsection.

A key tool in the derivation of our main results is the following lemma whose proof is postponed to Section \ref{SEC_TEC} for the sake of clarity.
\begin{lemme}\label{HMP_lemmas}[Remainders from Taylor decomposition]
Under \A{A}, for all $q \geq 1$ and $\lambda>0$, we have:
\begin{equation}\label{ineq_T1}
\P\big[  \sqrt{\Gamma_n}\nu_n({\mathcal A} \varphi) \geq a\big] 
\leq 
\exp\big( - \frac{a\lambda}{\sqrt{\Gamma_n}} \big)  \Big ( \E \exp\big( -\frac{q \lambda}{ \Gamma_n}M_n\big) \Big)^{\frac1q} \exp( \frac{ \lambda^2}{\Gamma_n} e_n) \mathscr R_n .
\end{equation}
\end{lemme}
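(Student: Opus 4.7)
The statement is the ``Chernoff + Hölder" decoupling already sketched in the user's guide: the point is to absorb the non-martingale remainder $\widetilde R_n$ into an acceptable error factor and isolate the martingale $M_n$, whose fine analysis is handled elsewhere.

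My first step is the exponential Markov inequality, applied after rescaling: for $\lambda>0$,
\begin{equation*}
\P\bigl[\sqrt{\Gamma_n}\,\nu_n(\mathcal A\varphi)\ge a\bigr]\le \exp\!\Bigl(-\tfrac{a\lambda}{\sqrt{\Gamma_n}}\Bigr)\,\E\!\Bigl[\exp\!\bigl(\tfrac{\lambda}{\sqrt{\Gamma_n}}\sqrt{\Gamma_n}\,\nu_n(\mathcal A\varphi)\bigr)\Bigr].
\end{equation*}
Next I invoke the decomposition \eqref{nu_nA_phi_psi_R} from Lemma~\ref{decomp_nu}, namely $\Gamma_n\nu_n(\mathcal A\varphi)=\widetilde R_n-M_n$, and split the exponential via Hölder's inequality with conjugate exponents $p,q>1$ ($\tfrac1p+\tfrac1q=1$):
\begin{equation*}
\E\!\Bigl[\exp\!\bigl(\tfrac{\lambda}{\Gamma_n}(\widetilde R_n-M_n)\bigr)\Bigr]\le \E\!\Bigl[\exp\!\bigl(\tfrac{p\lambda}{\Gamma_n}\widetilde R_n\bigr)\Bigr]^{1/p}\,\E\!\Bigl[\exp\!\bigl(-\tfrac{q\lambda}{\Gamma_n}M_n\bigr)\Bigr]^{1/q}.
\end{equation*}
The second factor is exactly the one appearing in the statement. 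What remains is to show that the first factor is bounded by $\exp(\tfrac{\lambda^2}{\Gamma_n}e_n)\mathscr R_n$ (absorbing the dependence on $p=q/(q-1)$ inside $e_n, \mathscr R_n$, which is consistent since $q$ in the statement is a free parameter and the final lemma only keeps track of these generic sequences).

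The bulk of the work is therefore the exponential control of $\widetilde R_n$, which from \eqref{DECOUP} collects: (i) the telescoping boundary term $\varphi(X_n)-\varphi(X_0)$, (ii) the first-order Taylor remainder $\sum_k\gamma_k\int_0^1\langle\nabla\varphi(X_{k-1}+t\gamma_k b_{k-1})-\nabla\varphi(X_{k-1}),b_{k-1}\rangle dt$, (iii) the analogous second-order remainder involving the Hessian of $\varphi$, and (iv) the conditional drifts $\sum_k\E[\psi_k(X_{k-1},U_k)\mid\F_{k-1}]$. For each piece I would use assumption \A{T${}_\beta$} (so $\nabla\varphi$ is $\beta$-Hölder and $D^2\varphi$ is bounded/$\beta$-Hölder), together with \A{C2} and \A{$\mathcal L_V$}: the $k$-th summand in (ii)--(iv) is then pointwise bounded by $C\gamma_k^{1+\beta/2}(1+\sqrt{V_{k-1}})$ or $C\gamma_k^{2}(1+\sqrt{V_{k-1}})$, using $\E[U_k]=0$ and $\E[U_k^{\otimes 3}]=0$ to eliminate the leading Gaussian-order contribution in (iv). The telescoping term (i) is handled via \A{T${}_\beta$} iii), $|\varphi(x)|\le C_{V,\varphi}(1+\sqrt{V(x)})$. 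Putting these together, one obtains
\begin{equation*}
|\widetilde R_n|\le C\bigl(1+\sqrt{V_n}+\sqrt{V_0}\bigr)+C\sum_{k=1}^n\gamma_k^{1+\beta/2}\bigl(1+\sqrt{V_{k-1}}\bigr).
\end{equation*}

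To exponentiate this, I would apply Jensen on the empirical probability $\gamma_k^{1+\beta/2}/\Gamma_n^{(1+\beta/2)}$ in the sum, then Young's inequality ($xy\le\tfrac{x^2}{2c_V}+\tfrac{c_V y^2}{2}$) to separate the $\lambda$-dependent prefactor from $\sqrt{V_{k-1}}$; the residual exponential moment $\E\exp(c_V V_{k-1})$ is uniformly bounded by \eqref{expV_int}. Using that $\Gamma_n^{(1+\beta/2)}/\Gamma_n\to 0$ under the step regime $\theta\in(\tfrac{1}{2+\beta},1]$, the prefactor reorganizes as $\exp(\tfrac{\lambda^2}{\Gamma_n}e_n)\,\mathscr R_n$ with $e_n\to 0$ and $\mathscr R_n\to 1$. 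The boundary contribution $\E\exp(\tfrac{C\lambda}{\Gamma_n}\sqrt{V_n})$ is treated identically via Young, producing an analogous $\exp(\tfrac{\lambda^2}{\Gamma_n^2}\cdot\mathrm{const})$ factor, which is absorbed into the same generic $\exp(\tfrac{\lambda^2}{\Gamma_n}e_n)$.

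The main obstacle I expect is bookkeeping: choosing $p=p(n)\to 1$ (equivalently $q\downarrow 1$, but here $q$ stays fixed and the adjustment is implicit in $e_n$) so that all the Young/Jensen constants can be absorbed into one $e_n\to 0$ without degrading the optimal $\lambda^2/\Gamma_n$ scaling. In particular, the boundary term $\varphi(X_n)-\varphi(X_0)$ only contributes a factor of order $\lambda^2/\Gamma_n^2$, strictly better than needed, so it poses no real threat; the tightest balance comes from the Taylor remainders, where one must verify that $\Gamma_n^{(1+\beta/2)}/\Gamma_n$ is indeed $o(1)$ under the hypothesis $\theta>1/(2+\beta)$ so that $e_n\to 0$.
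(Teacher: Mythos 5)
Your proposal follows the paper's argument structurally: exponential Markov bound, H\"older to isolate $M_n$ from $\widetilde R_n$ after the decomposition $\Gamma_n\nu_n(\mathcal A\varphi)=\widetilde R_n-M_n$ of Lemma~\ref{decomp_nu}, then a piece-by-piece control of $\widetilde R_n$ (boundary term, the two Taylor remainders, and the conditional drift $\bar G_n$) via the regularity of $\varphi$, the Lyapunov exponential integrability \eqref{expV_int}, and Jensen/Young. That is the same route. Two quantitative slips are worth flagging, though, because they touch the bookkeeping you identify as the main obstacle.

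First, you write ``choosing $p=p(n)\to 1$ (equivalently $q\downarrow 1$)''. These are not equivalent: with $\tfrac1p+\tfrac1q=1$, sending $q\downarrow 1$ forces $p\to+\infty$, and this is the choice the paper actually makes (with $p_n$ growing slowly enough that $p_n/\Gamma_n\to 0$, $a_n^2 p_n\to 0$, etc.). The direction matters: $p\to+\infty$ is what makes $(I_V^1)^{1/p}\to 1$ so that the residual multiplicative constant can be absorbed into $\mathscr R_n\to 1$; a fixed or shrinking $p$ would leave a constant strictly larger than $1$ that does not vanish.

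Second, your exponent for the binding term is off. The conditional drift $\bar G_n=\sum_k\E[\psi_k(X_{k-1},U_k)\mid\F_{k-1}]$, after the moment cancellations from \A{GC} and the $\beta$-H\"older control on $D^3\varphi$, is of order $\gamma_k^{(3+\beta)/2}$, not $\gamma_k^{1+\beta/2}$; the relevant smallness is $\tfrac{\Gamma_n^{((3+\beta)/2)}}{\sqrt{\Gamma_n}}\to 0$, which is exactly equivalent to $\theta>\tfrac{1}{2+\beta}$. Your condition $\Gamma_n^{(1+\beta/2)}/\Gamma_n\to 0$ holds for every $\theta\in(0,1]$, so it cannot be where the threshold $\theta>\tfrac{1}{2+\beta}$ comes from --- that should have been a red flag. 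The two Taylor remainders in $b$ and $\Sigma$ are in fact $O(\gamma_k^2\sqrt{V_{k-1}})$ (since $D^2\varphi$ is Lipschitz when $\varphi\in\mathcal C^{3,\beta}$), give a scaling $(\Gamma_n^{(2)})^2/\Gamma_n$, and only impose $\theta>\tfrac13$, which is weaker. Neither slip alters the structure of the argument, but both would have to be corrected for the estimate to close with the claimed threshold and with $\mathscr R_n\to 1$.
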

We will now sharply control the Lipschitz constant of $\psi_{k}(X_{k-1},\cdot) $, or equivalently $\Delta_k(X_{k-1},\cdot) $, which appears iteratively to handle the martingale term in \eqref{ineq_T1}.

In case of bounded innovations, we see by assumption \A{T${}_\beta$} i) (smoothness of $\varphi$) that:
\begin{eqnarray}\label{U_bounded_Lip}
&&| \nabla_u  \Delta_k(X_{k-1},u) |_{u=U_k}  |= | \sqrt{\gamma_k} \sigma_{k-1}^*  \nabla \varphi(X_{k})|
\nonumber \\
&& \leq | \sqrt{\gamma_k} \sigma_{k-1}^*  \nabla \varphi(X_{k-1})| 
+ | \sqrt{\gamma_k} \sigma_{k-1}^*  \left [ \nabla \varphi(X_{k}) -  \nabla \varphi(X_{k-1} + \gamma_k b_{k-1})  \right]| 
\nonumber \\
&&+ | \sqrt{\gamma_k} \sigma_{k-1}^*  \left [  \nabla \varphi(X_{k-1} + \gamma_k b_{k-1}) -\nabla \varphi(X_{k-1})   \right ]| 
\nonumber \\
&&\leq  \sqrt{\gamma_k} |\sigma_{k-1}^*  \nabla \varphi(X_{k-1})| +  \gamma_k \|\sigma_{k-1}\|^2  \|D^2 \varphi \|_{\infty}    \|U_k \|_{\infty} 
\nonumber \\
&&+  
| \sqrt{\gamma_k} \sigma_{k-1}^*  \left [  \nabla \varphi(X_{k-1} + \gamma_k b_{k-1}) -\nabla \varphi(X_{k-1})   \right ]| .
%
\end{eqnarray}
\textcolor{black}{
Remark that we have both controls
\begin{eqnarray}\label{double_ineq_diff_nabla_b}
 \!\!\!|\left [  \nabla \varphi(X_{k-1} + \gamma_k b_{k-1}) -\nabla \varphi(X_{k-1})   \right ]| \!\!\!
&\leq &  \!\!\!
\gamma_k \|D^2\varphi\|_\infty |b_{k-1} |  \!\!\!
\overset{\text{\A{${\mathcal L}_{\mathbf  V} $}, ii)}}{\leq}  \!\!\! \gamma_k \sqrt{C_V} \|D^2\varphi\|_\infty \sqrt V_{k-1} ,
\nonumber \\ 
 \!\!\! |\left [  \nabla \varphi(X_{k-1} + \gamma_k b_{k-1}) -\nabla \varphi(X_{k-1})   \right ]|
  \!\!\! &\le&  \!\!\! (2\|\nabla \varphi\|_\infty)^{\frac 12} |\left [  \nabla \varphi(X_{k-1} + \gamma_k b_{k-1}) -\nabla \varphi(X_{k-1})   \right ]|^{\frac 12}
  \nonumber \\
\!\!\! &\overset{\text{\A{${\mathcal L}_{\mathbf  V} $}, ii)}}{\leq} & \!\!\!  
 (2\|\nabla \varphi\|_\infty)^{\frac 12} \gamma_k^1/2 C_V^{1/4} \|D^2\varphi\|_\infty^{1/2} V^{1/4}_{k-1}  ,
 \end{eqnarray}
  in order to keep integrable powers of the Lyapunov function. We therefore eventually get from \eqref{U_bounded_Lip} and inequalities in \eqref{double_ineq_diff_nabla_b}: 
\begin{eqnarray}\label{U_bounded_Lip}
&&| \nabla_u  \Delta_k(X_{k-1},u) |_{u=U_k}  |^2
\le
\gamma_k |\sigma_{k-1}^*  \nabla \varphi(X_{k-1})|^2
\nonumber \\ 
&&+2 \sqrt{\gamma_k} |\sigma_{k-1}^*  \nabla \varphi(X_{k-1})| \big (\gamma_k \|\sigma_{k-1}\|^2  \|D^2 \varphi \|_{\infty}    \|U_k \|_{\infty}  +  \gamma_k^{3/2} \|\sigma\|_\infty \sqrt{C_V} \|D^2\varphi\|_\infty \sqrt V_{k-1} \big )
\nonumber \\
&&+\big (\gamma_k \|\sigma_{k-1}\|^2  \|D^2 \varphi \|_{\infty}    \|U_k \|_{\infty}  +  \|\sigma\|_\infty (2\|\nabla \varphi\|_\infty)^{\frac 12} \gamma_k C_V^{1/4} \|D^2\varphi\|_\infty^{1/2} V^{1/4}_{k-1}\big )^2 
\nonumber \\&&
\leq 
\gamma_k |\sigma_{k-1}^*  \nabla \varphi(X_{k-1})|^2+ C_{1,\eqref{U_bounded_Lip}} \gamma_k^{3/2} \|U_k\|_\infty+ C_{2,\eqref{U_bounded_Lip}} \gamma_k^2 \|U_k\|_\infty^2 + C_{\eqref{EXPR_GRAD}} \gamma_k^{2}\sqrt{V}_{k-1},
\end{eqnarray}
with $C_{1,\eqref{U_bounded_Lip}}:=2\|\sigma\|_\infty^3 \|\nabla \varphi\|_\infty \|D^2 \varphi\|_\infty$, $C_{2,\eqref{U_bounded_Lip}}:=2\|\sigma\|_\infty^4 \|D^2 \varphi\|_\infty^2$, $C_{\eqref{EXPR_GRAD}}:=6\|\sigma\|_\infty^2 \|\nabla \varphi\|_\infty \|D^2 \varphi\|_\infty $.
The last inequality above is a consequence of convexity inequality (i.e. for all $(x,y) \in \R^2$, $(x+y)^2\leq 2x^2+2y^2$).
}

Recalling that we consider first $\|U_k\|_\infty\le C_\infty $, we then derive:
\begin{equation}\label{omega1_LIP}
 [\Delta_k(X_{k-1},\cdot)]_ 1^2 \leq  \gamma_k  | \sigma^* \nabla \varphi |^2 (X_{k-1}) + C  \gamma_k^{3/2} +C_{\eqref{EXPR_GRAD}} \gamma_k^2  \sqrt V_{k-1},
\end{equation}
where in the above identity $C= C_{1,\eqref{U_bounded_Lip}}C_\infty+ C_{2,\eqref{U_bounded_Lip}} \gamma_1^{1/2} C_\infty^2$.
Let us introduce for all $(m,n)\in  \N_0^2$, $m \leq n$ and  $(\rho, q) \in (1,+ \infty)^2$:
\begin{equation}\label{Def_T}
T_{m} := \exp \Big ( - \frac{\rho q\lambda}{\Gamma_n}\Delta_m(X_{m-1},U_m)-\frac{\rho^2 (q\lambda)^2}{2\Gamma_n^2} \gamma_m | \sigma^*  \nabla \varphi (X_{m-1}) |^2
\textcolor{black}{-\sum_{k=1}^m C_{\eqref{EXPR_GRAD}}\gamma_k^{2} V_{k-1}}
 \Big)
.
\end{equation}
From the definition of $S_m$ in \eqref{Def_S}, we write 
$
 S_m := \prod_{k=1}^m T_k .
$ 
The coefficients $(T_m)_{m \geq 1}$ can be viewed as multiplicative increments of 
$(\tilde S_m)_{m \geq 0}$.
Inequality \eqref{omega1_LIP} precisely allows to quantify the martingality default for $(\tilde S_m)_{m \geq 0}$.
These factors appear when we exploit the auxiliary Poisson problem \eqref{Poisson_eq} in the definition of $\mathscr T_1$ in \eqref{Def_T_S}.
\begin{equation*}
 {\mathscr T}_1 =\exp \big ( \frac{\rho^2 q^2 \lambda^2}{2\Gamma_n}\nu(| \sigma^*  \nabla \varphi|^2) ]  \big) \E \big [  S_{n-1} \E[ T_n | \F_{n-1} ]  \big] .
\end{equation*}
Thereby, from the upper-bound \eqref{omega1_LIP} of the Lipschitz modulus, we directly obtain from \eqref{Def_T} and \A{GC}  
\begin{eqnarray*}
\E[ T_n | \F_{n-1}]
&=&
\exp \big ( -\frac{\rho^2 (q\lambda)^2}{2\Gamma_n^2} \gamma_n | \sigma^*  \nabla \varphi (X_{n-1}) |^2  \big )
 \E \Big [\exp \big  (  \frac{\rho q\lambda}{\Gamma_n}\Delta_n(X_{n-1},U_n) -\sum_{k=1}^n C_{\eqref{EXPR_GRAD}}\gamma_k^{2} V_{k-1} \big) \Big | F_{n-1}\Big ]
  \nonumber \\
&\leq&  \exp \big( \frac{\rho^2 (q\lambda)^2}{2\Gamma_n^2}  C \gamma_n^{3/2} -\sum_{k=1}^{n-1} C_{\eqref{EXPR_GRAD}}\gamma_k^{2} V_{k-1} \big).
\end{eqnarray*}
Hence, iterating:
\begin{eqnarray*}
{\mathscr T}_1 
&\leq& \exp \big ( \frac{\rho^2 (q\lambda)^2}{2\Gamma_n}\nu(| \sigma^*  \nabla \varphi|^2)   \big) \E[S_{n-1}] \exp \big(  \frac{\rho^2 (q\lambda)^2}{2\Gamma_n^2}   \gamma_n^{3/2}  C  \big)
\\
&\leq& \exp \big ( \frac{\rho^2 (q\lambda)^2}{2\Gamma_n}\nu(| \sigma^*  \nabla \varphi|^2)   \big)
 \exp \big(  \frac{\rho^2 (q\lambda)^2}{2\Gamma_n}   \underbrace{\frac{\Gamma_n^{(3/2)}}{\Gamma_n}}_{=e_n}  C  \big) 
= \exp \big ( \frac{\rho^2 (q\lambda)^2}{2\Gamma_n}  ( \nu(| \sigma^*  \nabla \varphi|^2) + e_n )   \big),
\end{eqnarray*}
where $ e_n \rightarrow 0$. The controls for ${\mathscr T}_2$ are deduced from \eqref{T2_user}\textcolor{black}{, and ${\mathscr T}_3$ from \eqref{T3_user}}. We now gather the previous estimates into \eqref{Mn_ineq_T1T2} (we recall $\E[\exp(- \frac{\lambda q M_n}{\Gamma_n}) ] \le {\mathscr T}_1 ^{\frac{1}{\rho}} {\mathscr T}_2^{\frac {\rho-1}{\hat q \rho}} {\mathscr T}_2^{\frac {\rho-1}{\hat p \rho}}$) in the following lemma.
Note also that the term $\mathcal R_n^{\vartheta}$ appearing in \eqref{T2_user} is controlled similarly to remainders in Lemma \ref{HMP_lemmas}.

\begin{lemme}[Gaussian concentration term]\label{Mn}
With notations of \eqref{DECOUP}, under \A{A}, for a bounded $\rho>1$, we have:
\begin{equation*}
\E\exp\big(-\frac{\lambda q}{\Gamma_n}M_n\big) ^{\frac 1q}\leq \exp \big(\frac{\lambda^2}{\Gamma_n} 
\textcolor{black}{A_n}+\frac{\lambda^4}{\Gamma_n^3} 
\textcolor{black}{B_n} \big ) {\mathscr R}_n,
\end{equation*}
where \textcolor{black}{
\begin{equation}\label{def_ABbar}
A_n:= \rho \big( \frac{ q\nu(| \sigma^*  \nabla \varphi |^2)}{2}+e_n \big )
\quad\mbox{ and }\quad 
B_n:= \frac{\rho^3}{\rho-1} \frac{q^3\hat q}{4}\big( \frac{\bar q\|\sigma\|_\infty^2[\vartheta]_1^2}{2}+ e_n\big),
\end{equation}
}
for some $1<\bar q:=\bar q(n) \underset{n}{\rightarrow}1$, and with:
$
e_n\underset{n \to + \infty}{ \longrightarrow} 0, \
{\mathscr R}_n \underset{n \to + \infty}{ \longrightarrow} 1
$
uniformly in $\lambda$.
\end{lemme}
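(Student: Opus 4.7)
The plan is to assemble the three-term decomposition sketched in Section \ref{USER_GUIDE} into a clean argument for the lemma. First, starting from the martingale representation of $M_n$, I would multiply the integrand $\exp(-\lambda q M_n/\Gamma_n)$ by $\exp(\pm\frac{\rho^2(q\lambda)^2}{2\Gamma_n^2}\sum_k \gamma_k\mathcal{A}\vartheta(X_{k-1}))$ and by $\exp(\pm\sum_k C_{\eqref{EXPR_GRAD}} \gamma_k^{2}\sqrt{V_{k-1}})$, then apply a three-fold Hölder inequality with exponents $\rho,\ \hat q\rho/(\rho-1),\ \hat p\rho/(\rho-1)$ (with $1/\hat p+1/\hat q=1$). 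This produces exactly the quantities $\mathscr T_1, \mathscr T_2, \mathscr T_3$ of \eqref{Def_T_S}, each of which I would treat separately.

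The heart of the proof is the supermartingale control of $\mathscr T_1$. I would introduce the multiplicative increments $T_m$ defined in \eqref{Def_T} and exploit that the only source of randomness in $\Delta_m(X_{m-1},U_m)$ is the innovation $U_m$, together with the sharp Lipschitz estimate \eqref{omega1_LIP}. Assumption \A{GC} applied conditionally on $\F_{m-1}$ then yields the pointwise bound
\[
\E[T_m\mid \F_{m-1}]\le \exp\Bigl(\frac{\rho^2(q\lambda)^2}{2\Gamma_n^2}\,C\,\gamma_m^{3/2}\Bigr),
\]
since the $\gamma_m|\sigma^*\nabla\varphi(X_{m-1})|^2$ and $C_{\eqref{EXPR_GRAD}}\gamma_m^2\sqrt{V_{m-1}}$ pieces of $[\Delta_m(X_{m-1},\cdot)]_1^2$ are cancelled by construction of $T_m$. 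Iterating this inequality backwards from $m=n$ to $m=1$, and using $\Gamma_n^{(3/2)}/\Gamma_n\to 0$ for $\theta>1/3$, gives $\E[S_n]\le \exp\bigl(\frac{\lambda^2}{\Gamma_n}e_n\bigr)$. Combining with the identity \eqref{T1_user} derived from the auxiliary Poisson equation \eqref{Poisson} yields $\mathscr T_1\le \exp\bigl(\frac{\rho^2(q\lambda)^2}{2\Gamma_n}(\nu(|\sigma^*\nabla\varphi|^2)+e_n)\bigr)$, which, raised to the power $1/\rho$, produces exactly the $\rho(q\nu(|\sigma^*\nabla\varphi|^2)/2+e_n)$ contribution in $A_n$.

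For $\mathscr T_2$, I would rely on the already established coarser Gaussian concentration for $\mathcal A\vartheta$. Rewriting $\sum_k\gamma_k\mathcal A\vartheta(X_{k-1})$ via the analogue of Lemma \ref{decomp_nu} applied to $\vartheta$ produces a martingale term $M_n^\vartheta$ together with remainders that are controlled as in Lemma \ref{HMP_lemmas}. Introducing a further auxiliary exponent $\bar q>1$ tending to $1$ with $n$, and iterating the uniform Lipschitz bound $[\psi_k^\vartheta(X_{k-1},\cdot)]_1\le \sqrt{\gamma_k}\|\sigma\|_\infty[\vartheta]_1$ as in \eqref{BD_GROSS}, yields the intermediate estimate \eqref{T2_user}. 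Raising to the power $(\rho-1)/(\hat q\rho)$ transforms the prefactor $1/(\rho-1)^2$ into $1/(\rho-1)$, producing the announced $\frac{\rho^3}{\rho-1}\frac{q^3\hat q}{4}(\bar q\|\sigma\|_\infty^2[\vartheta]_1^2/2+e_n)$ in $B_n$.

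Finally, for $\mathscr T_3$, I would apply Jensen's inequality against the discrete probability measure $\gamma_k^2/\Gamma_n^{(2)}$ to push the sum outside the exponential, then Young's inequality to split the deterministic coefficient $\frac{\lambda^2 q^2\rho^2\hat p\Gamma_n^{(2)}}{2(\rho-1)\Gamma_n^2}C$ from $\sqrt{V_{k-1}}$, and conclude with the uniform exponential moment \eqref{expV_int}. For $\theta>1/3$ one has $\Gamma_n^{(2)}/\Gamma_n^2\to 0$, so choosing $\hat p\to\infty$ slowly enough yields the bound \eqref{T3_user}, whose $(\rho-1)/(\hat p\rho)$-power is absorbed into $\mathscr R_n\to 1$ together with an $e_n\lambda^4/\Gamma_n^3$ contribution absorbed into $B_n$. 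Multiplying the three bounds, raising to $1/q$, and collecting remainders yields the announced form \eqref{def_ABbar}, uniformly in $\lambda$. The main obstacle is indeed the sharpening of the Lipschitz modulus encoded in \eqref{omega1_LIP}, which is precisely what forces the three-term splitting above; everything else amounts to bookkeeping of exponents.
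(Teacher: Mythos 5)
Your proposal follows the paper's own argument essentially step by step: the same three‑fold Hölder split into $\mathscr T_1,\mathscr T_2,\mathscr T_3$, the same supermartingale control of $\mathscr T_1$ via the increments $T_m$ and the refined Lipschitz bound \eqref{omega1_LIP}, the same coarse Gaussian iteration for $\mathscr T_2$, and the same Jensen/Young/$I_V$ treatment of $\mathscr T_3$. The only slips are bookkeeping ones in the auxiliary multipliers: the offset subtracted from the $M_n$-term should carry a single factor $\rho$, namely $\exp\bigl(\pm\frac{\rho(q\lambda)^2}{2\Gamma_n^2}\sum_k\gamma_k\mathcal A\vartheta_{k-1}\bigr)$ (not $\rho^2$), and the $\sqrt{V_{k-1}}$-offset must also carry the $\frac{\rho(q\lambda)^2}{2\Gamma_n^2}$ prefactor so that, after raising to the $\rho$-th power in the first Hölder factor, one recovers exactly the paper's $\mathscr T_1,\mathscr T_2,\mathscr T_3$; with $\rho$ bounded these corrections do not change the asymptotics but are needed for the displayed constants $A_n,B_n$ of \eqref{def_ABbar} to come out exactly.
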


As a consequence of the previous Lemmas \ref{HMP_lemmas} and \ref{Mn}, we obtain \eqref{Ineq_P}, namely:
\begin{equation}\label{ineq_P}
\P \big( \sqrt{\Gamma_n} \nu_n ( \mathcal{A} \varphi ) \geq a \big) \leq C_n 
\exp \big( P(\lambda)  \big ),
\end{equation}
with $P(\lambda) :=  - \frac{ a\lambda }{\sqrt{\Gamma_n}}+\frac{\lambda^2}{\Gamma_n}A_n+\frac{\lambda^4}{\Gamma_n^3}B_n $,  
where $A_n = A_n(\rho) = \rho \widetilde A_n$ and $ B_n=B_n(\rho):=\frac{\rho^3}{\rho-1}\widetilde B_n$ with 
\begin{equation*}
\widetilde A_n = \frac{ q\nu(| \sigma^*  \nabla \varphi |^2)}{2}+e_n
\quad\mbox{ and }\quad 
\widetilde B_n =\frac{q^3 \hat q}{4}\big( \frac{\bar q\|\sigma \|_{\infty}^2 \| \nabla \vartheta\|_{\infty}^2}{2}+ e_n\big).
\end{equation*}
Next, like enunciated at the end of the \textit{User's guide to the proof}, 
 we optimize a fourth order polynomial by the Cardan method, see \eqref{lambda} below (and Section 4 in \cite{hon:men:pag:16}).
If $\lambda_n= \argmin_\lambda P(\lambda)$, then
$$P'(\lambda_n) =  - \frac{ a}{\sqrt{\Gamma_n}}+\frac{2 \lambda_n}{\Gamma_n}A_n+\frac{4 \lambda_n^3}{\Gamma_n^3}B_n = 0.$$
 The Cardan-Tartaglia formula yields only one positive real root. 
 Namely, setting
\[
\Phi_n(a,\rho)= 
\Big( \frac{a}{\sqrt{\Gamma_n}\tilde B_n }+\big(\frac{a^2}{\tilde B_n^2\Gamma_n}+ (\rho-1)\big(\frac{2 \tilde A_n}{3\tilde B_n}\big)^3\big)^{\frac 12}\Big)^{\frac 13}+\Big( \frac{a}{\sqrt{\Gamma_n}\tilde B_n }-\big(\frac{a^2}{\tilde B_n^2\Gamma_n}+ (\rho-1)\big(\frac{2 \tilde A_n}{3\tilde B_n}\big)^3\big)^{\frac 12}\Big)^{\frac 13},
\]
this conducts to take $\lambda=\lambda_n$ with:
\begin{equation}\label{lambda}
\lambda_n 
:=\frac{\Gamma_n}{2}\frac{(\rho-1)^{\frac 13}}{\rho}\Phi_n(a,\rho).
\end{equation}
Moreover, remark from the binomial Newton expansion that: 
\begin{equation*}
\Phi_n(a,\rho)^3 = \frac{2a}{\sqrt{\Gamma_n}\tilde B_n } -  \frac{2 ( \rho-1)^{1/3} \tilde A_n}{\tilde B_n} \Phi_n(a,\rho).
\end{equation*}
From \eqref{lambda} and the above expression,  
$ P(\lambda_n)=P_{\min}(a,\Gamma_n, \rho)=  \lambda_n \big (- \frac{ a }{\sqrt{\Gamma_n}}+\frac{\lambda_n}{\Gamma_n}A_n+\frac{\lambda_n^3}{\Gamma_n^3}B_n \big ) $, we \textcolor{black}{thus} obtain 
\begin{equation}\label{P_min_def}
P_{\min}(a,\Gamma_n, \rho) := -\frac{\sqrt{\Gamma_n} (\rho-1)^{1/3} \Phi_n(a,\rho)}{2^3 \rho} \big ( 3a- \sqrt{\Gamma_n}(\rho-1)^{1/3} \tilde A_n \Phi_n(a,\rho) \big ).
\end{equation}
Then, from \eqref{ineq_P}:
\begin{equation}
\label{ineq_PMIN}
\P \big( \sqrt{\Gamma_n} \nu_n ( \mathcal{A} \varphi ) \geq a \big) \leq C_n \exp \big( P_{\min}(a,\Gamma_n, \rho) \big ),
\end{equation}
\textcolor{black}{which} is exactly the same bound appearing in Remark 11 in \cite{hon:men:pag:16}, up to a modification of $\tilde A_n$, containing here the expected \textit{carré du champ}.

 \textcolor{black}{The optimization over $ \lambda$ leads to study how $\rho$ should asymptotically behave. The following lemma indicates that, when $a=o(\sqrt{\Gamma_n}) $, taking $\rho -1 \asymp \frac a{\sqrt{\Gamma_n}}$ yields a Gaussian concentration inequality in \eqref{ineq_P} with the optimal constant.}
\begin{lemme}[Choice of $\rho$ 
for the  Gaussian concentration regime]\label{RHO}
For 
$P_{\min}(a,\Gamma_n, \rho)$ as in \eqref{P_min_def},
there is $\rho:=\rho(n,a)>1$ s.t.
If $\frac{a}{\sqrt{\Gamma_n}} \underset{n}{\rightarrow} 0$, taking $\rho-1 \asymp \frac a {\sqrt{\Gamma_n}}$ 
\begin{equation*}
P_{\min}(a,\Gamma_n, \rho) \underset{\frac{a}{\sqrt{\Gamma_n}} \underset{n}{\rightarrow} 0}{ =} - \frac{a^2}{2 \nu(| \sigma^*  \nabla \varphi |^2)}(1+ o(1 )).
\end{equation*}
\end{lemme}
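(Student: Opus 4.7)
The plan is an explicit asymptotic expansion of the Cardan--Tartaglia expression for $\Phi_n(a,\rho)$ under the scaling $\rho - 1 = c\,a/\sqrt{\Gamma_n}$ with a constant $c>0$. Setting $\delta := a/\sqrt{\Gamma_n}$, $u := a/(\sqrt{\Gamma_n}\,\tilde B_n)$ and $v := (\rho-1)\bigl(2\tilde A_n/(3\tilde B_n)\bigr)^3$, I would rewrite
\[
\Phi_n(a,\rho) = \bigl(u + \sqrt{u^2+v}\bigr)^{1/3} - \bigl(\sqrt{u^2+v} - u\bigr)^{1/3},
\]
interpreting the second term as the real cube root of the negative summand in $\Phi_n$. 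In this scaling $u = O(\delta)$ and $v = O(\delta)$, whence $u^2/v = O(\delta) \to 0$, so $\sqrt{u^2+v}$ is dominated by $\sqrt{v}$.

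Next I would factor $v^{1/6}$ out and Taylor-expand the two cube roots around $1$ via $(1+\epsilon)^{1/3} - (1-\epsilon)^{1/3} = \tfrac{2\epsilon}{3} + O(\epsilon^3)$, with $\epsilon := u/\sqrt{v} = O(\sqrt{\delta})$. This yields $\Phi_n(a,\rho) = \tfrac{2u}{3\,v^{1/3}}(1+o(1))$, which, after substituting the definitions of $u$ and $v$, collapses to the clean identity
\[
\sqrt{\Gamma_n}\,(\rho-1)^{1/3}\Phi_n(a,\rho) = \frac{a}{\tilde A_n}\,(1+o(1)).
\]
The point of this Taylor cancellation is that the dependence on $\rho$, on $c$, and on $\tilde B_n$ disappears at leading order; only $\tilde A_n$ survives, which is precisely what will produce the \emph{carr\'e du champ} as the limiting variance.

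Plugging this estimate into the closed form \eqref{P_min_def}, the factor $\bigl(3a - \sqrt{\Gamma_n}(\rho-1)^{1/3}\tilde A_n\Phi_n\bigr)$ becomes $2a(1+o(1))$, the prefactor is $-a/(8\rho\,\tilde A_n)(1+o(1))$, and $\rho \to 1$. Thus
\[
P_{\min}(a,\Gamma_n,\rho) = -\frac{a^2}{4\,\tilde A_n}\,(1+o(1)) = -\frac{a^2}{2\,\nu(|\sigma^*\nabla\varphi|^2)}\,(1+o(1)),
\]
using that $\tilde A_n = q\,\nu(|\sigma^*\nabla\varphi|^2)/2 + e_n$ with $q \downarrow 1$ and $e_n \to 0$ from the construction in Lemma~\ref{Mn}.

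The main delicate point will be to synchronize three sources of slack: the Taylor remainder in $\epsilon = u/\sqrt{v}$ (which requires $\delta \to 0$, i.e.\ $a = o(\sqrt{\Gamma_n})$); the auxiliary parameters $q, \bar q, \hat q, \hat p$ approaching $1$ or $+\infty$ at the appropriate rate so that the $e_n$ remainders absorbed in $\tilde A_n,\tilde B_n$ indeed vanish; and the scaling $\rho - 1 = c\delta \to 0$ itself. The constant $c$ can be chosen freely since only subleading corrections depend on it; one then checks that these three errors can be arranged compatibly so as to feed into monotone sequences $c_n\uparrow 1$, $C_n\downarrow 1$ consistent with the statement of Theorem~\ref{THM_CARRE_CHAMPS}.
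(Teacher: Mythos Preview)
Your proposal is correct and follows essentially the same route as the paper. Both arguments perform an asymptotic expansion of the Cardan--Tartaglia expression under the Gaussian-deviation scaling and arrive at $\sqrt{\Gamma_n}(\rho-1)^{1/3}\Phi_n(a,\rho)=a/\tilde A_n\,(1+o(1))$, from which the claim follows by substitution into \eqref{P_min_def}; the only cosmetic difference is that the paper parametrizes via $\xi:=v/u^2\to\infty$ (a setup it reuses for the super-Gaussian case in Lemma~\ref{RHO_optim}) whereas you expand directly in $\epsilon=u/\sqrt{v}=\xi^{-1/2}\to 0$.
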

For the sake of clarity, the proof of Lemma \ref{RHO} is postponed to Section \ref{proof_lemma_RHO}.
From \eqref{ineq_PMIN} and Lemma \ref{RHO}, we conclude the proof of Theorems \ref{THM_CARRE_CHAMPS} and \ref{Exact_OPTIM} for bounded innovations.

\subsubsection{Proof of Theorems \ref{THM_CARRE_CHAMPS} 
for unbounded innovations}
Switching to unbounded innovations requires additional technicalities.
Our strategy consists in 
considering a truncation argument writing $\Delta_k(X_{k-1},U_k)=\Delta_k(X_{k-1},U_k) [ \mathds{1}_{|U_k| \leq \frac{r_{k,n}}{2}} + \mathds{1}_{|U_k| > \frac{r_{k,n}}{2}} ]$, 
to control the Lipschitz modulus of $\psi_{k-1}$ where $(r_{k,n})_{n \geq 1, k \leq n}$ is a suitable sequence specified in \eqref{rn} below.
In particular, $r_{k,n}:=r_{k,n}(\A{A},\lambda,\rho) $ where $\lambda>0,\rho>1 $ are as in the \textit{User's Guide to the Proof}. 

For our choice below, we will have that, for all $k \in \leftB 1,n \rightB$,  $r_{k,n}\uparrow_n + \infty$.
That choice for $r_{k,n}$ also yields that when $|U_k| \leq r_{k,n}$, our controls behave like for the bounded case.
But when $|U_k| > r_{k,n}$, we will handle this large deviation regime by assumption \A{GC}. We indeed know that for all $K>0$:
\begin{eqnarray}\label{ineq_dens_inno}
\mu(\{|x| > K \}) \leq 2 \exp( -\frac{ K^2}2).
\end{eqnarray}
Let us recall from the definition of $\Delta_k(X_{k-1},U_k)$ in  \eqref{DEF_PSI_K} and \eqref{DECOUP} that:
\begin{equation}
\label{def_DELTA_K}
\Delta_k(X_{k-1}, U_k) =\sqrt{\gamma_k} \sigma_{k-1} U_k \cdot \nabla \varphi( X_{k-1} +\gamma_k b_{k-1} ) + \Xi_k(X_{k-1},U_k),
\end{equation}
where for all $(k,u) \in [\![1,n]\!] \times \R^r$:
\begin{eqnarray}\label{def_Xi}
\Xi_k(X_{k-1},u) := && \gamma_k \int_0 ^1 ( 1-t) \Tr\Big ( D^2\varphi ( X_{k-1} +\gamma_k b_{k-1} + t \sqrt{\gamma_k} \sigma_{k-1} u ) \sigma_{k-1}u\otimes u \sigma_{k-1}^* 
\nonumber \\
&&- \E \big[ D^2\varphi ( X_{k-1} +\gamma_k b_{k-1} + t \sqrt{\gamma_k} \sigma_{k-1} U_k ) \sigma_{k-1}U_k\otimes U_k \sigma_{k-1}^* | \F_{k-1} \big ] \Big )dt.
\end{eqnarray}
For the terms  $(T_k)_{k \leq n}$ defined in \eqref{Def_T}, the lemma below controls the ``\textit{super martingality}" default of $S_n= \prod_{k=1}^n T_k$.
\begin{lemme}\label{CTR_T1_N}
For all $k \in [\![1,n]\!]$\textcolor{black}{
\begin{equation*}
\E[ T_k | \F_{k-1} ] \leq \aleph_{k,n} (\lambda, \gamma_k,  r_{k,n}) \exp \big (-\sum_{i=1}^{k-1} C_{\eqref{EXPR_GRAD}}\gamma_i^{2} V_{i-1} \big ), 
\end{equation*}
}with, \textcolor{black}{as in \eqref{U_bounded_Lip}, $C_{\eqref{EXPR_GRAD}}:=6\|\sigma\|_\infty^2 \|\nabla \varphi\|_\infty \|D^2 \varphi\|_\infty $,}
\begin{equation}\label{def_aleph}
\aleph_{k,n} (\lambda, \gamma_k,  r_{k,n}) 
:=
\big( 1+ 2\exp ( - C  r_{k,n}^2 ) \big) \exp \big (\frac{ \rho q \lambda}{ \Gamma_n} C \gamma_k^{1/2} \exp(-\frac{ r_{k,n}^2}{4}  ) \big) 
 \exp \big ( \frac{ \rho^2 q^2 \lambda^2 }{\Gamma_n^2} C \gamma_k^{3/2}  r_{k,n}^2 
\big ),
\end{equation}
and
\begin{equation}\label{rn}
r_{k,n}=r_{k,n}(\A{A},\lambda,\rho):=
\left\{
\begin{array}{ll}
&r_n= C 
 (1+\frac{\rho q \lambda}{\Gamma_n} ) \big (\frac{\Gamma_n}{\Gamma_n^{(3/2)}} \big)^{1/4}, \ for \ \theta \in (\frac 13, 1),
\\
& C 
 (1+\frac{\rho q \lambda}{\Gamma_n} ) \ln(n+1)^{1/4} \ln(k+1)^{1/2}
,  \ for \ \theta=1,
\end{array}
\right.
\end{equation}
for $C>0$ s.t. $r_{1,1} > \bar c\sqrt{\gamma_1} \|\sigma \|_{\infty} \|\nabla \varphi \|_{\infty}$ for $\bar c$ large enough (every $\bar c>8 $ works, see the proof of Lemma \ref{CTR_T1_N}, and equation \eqref{GC_Delta_barHn1}). 
This choice is briefly explained in Remark \ref{r_n_aleph} below.
\end{lemme}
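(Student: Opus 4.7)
The plan is first to extract from $T_k$ the quantities measurable with respect to $\F_{k-1}$. Both $|\sigma^{*}\nabla\varphi(X_{k-1})|^{2}$ and each $V_{i-1}$ with $i\le k$ are $\F_{k-1}$-measurable, so they factor out of the conditional expectation, leaving
\begin{equation*}
\E[T_k\mid\F_{k-1}]=\exp\!\Bigl(-\frac{\rho^{2}q^{2}\lambda^{2}}{2\Gamma_n^{2}}\gamma_k|\sigma^{*}\nabla\varphi(X_{k-1})|^{2}-\sum_{i=1}^{k}C_{\eqref{EXPR_GRAD}}\gamma_i^{2}V_{i-1}\Bigr)\,G_k(\lambda),
\end{equation*}
with $G_k(\lambda):=\E[\exp(-\tfrac{\rho q\lambda}{\Gamma_n}\Delta_k(X_{k-1},U_k))\mid\F_{k-1}]$. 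The remaining task is thus to prove that $G_k(\lambda)\le\aleph_{k,n}(\lambda,\gamma_k,r_{k,n})\exp\bigl(\tfrac{\rho^{2}q^{2}\lambda^{2}}{2\Gamma_n^{2}}\gamma_k|\sigma^{*}\nabla\varphi(X_{k-1})|^{2}+C_{\eqref{EXPR_GRAD}}\gamma_k^{2}V_{k-1}\bigr)$, so that the $i=k$ contribution of the $V_{i-1}$ sum is absorbed and only $\sum_{i=1}^{k-1}$ survives, as required.

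The next step handles the possibly unbounded innovation by splitting $G_k(\lambda)=G_k^{\le}(\lambda)+G_k^{>}(\lambda)$ along $\{|U_k|\le r_{k,n}/2\}$ and $\{|U_k|>r_{k,n}/2\}$. On the first event I replace $\Delta_k(X_{k-1},\cdot)$ by a McShane Lipschitz extension $\bar\Delta_k\colon\R^{r}\to\R$ coinciding with $\Delta_k(X_{k-1},\cdot)$ on $\{|u|\le r_{k,n}/2\}$. Redoing the Taylor argument from \eqref{U_bounded_Lip} with $\|U_k\|_\infty$ replaced by $r_{k,n}/2$ and using convexity to absorb the cross terms furnishes
\begin{equation*}
[\bar\Delta_k]_1^{2}\le \gamma_k|\sigma^{*}\nabla\varphi(X_{k-1})|^{2}+C\gamma_k^{3/2}r_{k,n}^{2}+C_{\eqref{EXPR_GRAD}}\gamma_k^{2}V_{k-1}.
\end{equation*}
Since $\E[\Delta_k\mid\F_{k-1}]=0$, the mean of $\bar\Delta_k$ differs from zero only through tail contributions, so Cauchy--Schwarz together with \eqref{ineq_dens_inno} and the moment bounds implicit in \A{GC} yield $|\E[\bar\Delta_k\mid\F_{k-1}]|\le C\sqrt{\gamma_k}\exp(-r_{k,n}^{2}/4)$. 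Applying \A{GC} to $\bar\Delta_k$ then produces
\begin{equation*}
G_k^{\le}(\lambda)\le \exp\!\Bigl(\frac{\rho q\lambda}{\Gamma_n}C\sqrt{\gamma_k}e^{-r_{k,n}^{2}/4}+\frac{\rho^{2}q^{2}\lambda^{2}}{2\Gamma_n^{2}}[\bar\Delta_k]_1^{2}\Bigr),
\end{equation*}
which delivers the expected $|\sigma^{*}\nabla\varphi(X_{k-1})|^{2}$ and $V_{k-1}$ contributions together with the middle and last exponential factors of $\aleph_{k,n}$ in \eqref{def_aleph}.

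For the tail contribution $G_k^{>}(\lambda)$, I apply Cauchy--Schwarz conditionally,
\begin{equation*}
G_k^{>}(\lambda)\le \E\bigl[\exp(-\tfrac{2\rho q\lambda}{\Gamma_n}\Delta_k)\mid\F_{k-1}\bigr]^{1/2}\,\P\bigl[|U_k|>r_{k,n}/2\mid\F_{k-1}\bigr]^{1/2}.
\end{equation*}
The first factor is treated by yet another McShane extension of $\Delta_k$ (with constant proportional to $\sqrt{\gamma_k}\|\sigma\|_\infty\|\nabla\varphi\|_\infty$) and \A{GC}, together with the Gaussian moment bounds on the quadratic remainder $\Xi_k$ in \eqref{def_Xi}; the second factor is at most $\sqrt{2}\exp(-r_{k,n}^{2}/8)$ by \eqref{ineq_dens_inno}. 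The specific scaling of $r_{k,n}$ in \eqref{rn} forces $\rho q\lambda/\Gamma_n\lesssim r_{k,n}^{2}$, so the first factor contributes at most a bounded constant and the product is absorbed into the $(1+2\exp(-Cr_{k,n}^{2}))$ prefactor of $\aleph_{k,n}$. Summing $G_k^{\le}$ and $G_k^{>}$ and reinserting the deterministic factor extracted at the outset finishes the proof.

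The main obstacle lies in Step 2: designing the Lipschitz extension $\bar\Delta_k$ so that $[\bar\Delta_k]_1^{2}$ captures the \emph{local} value $\gamma_k|\sigma^{*}\nabla\varphi(X_{k-1})|^{2}$ --- which is essential to reach the \textit{carré du champ} $\nu(|\sigma^{*}\nabla\varphi|^{2})$ in the variance --- rather than the crude uniform bound $\gamma_k\|\sigma\|_\infty^{2}\|\nabla\varphi\|_\infty^{2}$ that was used in \cite{hon:men:pag:16}, while simultaneously keeping the mean-shift of the extension sub-Gaussian in $r_{k,n}$. The choice of $r_{k,n}$ in \eqref{rn} is then dictated by two competing constraints: $r_{k,n}^{2}$ must dominate $\log(\rho q\lambda/\Gamma_n)$ to keep $\exp(-Cr_{k,n}^{2})$ harmless, yet the contribution $\lambda^{2}\gamma_k^{3/2}r_{k,n}^{2}/\Gamma_n^{2}$ must remain summable when iterated to control $\mathscr T_1$, which explains the small power of $\Gamma_n/\Gamma_n^{(3/2)}$ (respectively the logarithmic factor for $\theta=1$) appearing in $r_{k,n}$.
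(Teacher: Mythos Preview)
Your approach is essentially the same as the paper's: factor out the $\F_{k-1}$-measurable terms, split the conditional expectation of $\exp(-\tfrac{\rho q\lambda}{\Gamma_n}\Delta_k)$ at the threshold $r_{k,n}$, use a Lipschitz extension on the small-innovation part to apply \A{GC} with the \emph{local} modulus $\gamma_k|\sigma^{*}\nabla\varphi(X_{k-1})|^{2}+C\gamma_k^{3/2}r_{k,n}^{2}+C_{\eqref{EXPR_GRAD}}\gamma_k^{2}V_{k-1}$, control the mean shift of the extension by Cauchy--Schwarz and the sub-Gaussian tail \eqref{ineq_dens_inno}, and treat the large-innovation part by Cauchy--Schwarz and the global Lipschitz bound \eqref{Psi_Lipschitz}.

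Two minor points where you diverge from the paper. First, the paper uses the explicit projection extension $\bar\Delta_k(X_{k-1},u):=\Delta_k\bigl(X_{k-1},\Pi_{\bar B(0,r_{k,n})}(u)\bigr)$ rather than a generic McShane extension; both give the same Lipschitz constant, so your choice is fine. Second, for the tail contribution $G_k^{>}$ you invoke ``yet another McShane extension'' and ``Gaussian moment bounds on the quadratic remainder $\Xi_k$'': this is unnecessary, because $\Delta_k(X_{k-1},\cdot)$ is \emph{already} globally Lipschitz with constant $\sqrt{\gamma_k}\|\sigma\|_\infty\|\nabla\varphi\|_\infty$ and has zero conditional mean, so \A{GC} applies directly to give $\E[\exp(-\tfrac{2\rho q\lambda}{\Gamma_n}\Delta_k)\mid\F_{k-1}]^{1/2}\le\exp\bigl(\tfrac{\rho^{2}q^{2}\lambda^{2}}{\Gamma_n^{2}}\gamma_k\|\sigma\|_\infty^{2}\|\nabla\varphi\|_\infty^{2}\bigr)$. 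The paper then exploits the factorisation $r_{k,n}=(1+\tfrac{\rho q\lambda}{\Gamma_n})u_{k,n}$, so that $r_{k,n}^{2}\ge u_{k,n}^{2}\bigl(1+\tfrac{\rho^{2}q^{2}\lambda^{2}}{\Gamma_n^{2}}\bigr)$, and the condition $u_{k,n}^{2}>16\gamma_1\|\sigma\|_\infty^{2}\|\nabla\varphi\|_\infty^{2}$ (this is precisely the $\bar c>8$ requirement) makes the exponent in the product negative; this is a bit sharper than your qualitative ``$\rho q\lambda/\Gamma_n\lesssim r_{k,n}^{2}$'', which should really read $(\rho q\lambda/\Gamma_n)^{2}\gamma_k\lesssim r_{k,n}^{2}$.
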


\begin{remark}\label{r_n_aleph}
The specific form of the truncation and of the time steps chosen yields $$r_{k,n}
=\begin{cases} 
O \big( (1+\frac{\rho q \lambda}{\Gamma_n} ) n^{\frac{1-\theta}4} \big ),\ {\rm if}\ \theta \in (2/3,1),\\
O \big ((1+\frac{\rho q \lambda}{\Gamma_n} )  \ln(n)^{-1/4}n^{\frac{1}{12}} \big),\ {\rm if}\ \theta =2/3,\\
O\big ((1+\frac{\rho q \lambda}{\Gamma_n} ) n^{\frac{\theta}8} \big ),\ {\rm if}\ \theta \in (1/3,2/3).
\end{cases}
$$ Anyhow, we always have for each $k\leq n$, $r_{k,n} \underset{n }{\longrightarrow} + \infty$.
Identity \eqref{ineq_Sn} in the following lemma can give an intuition of our choice in \eqref{rn}.
This result ensures that the terms $\aleph_{k,n} (\lambda, \gamma_k,  r_{k,n}) $ can be viewed as remainders (observe indeed that some $e_n\underset{n}{\rightarrow} 0 $ appear in the exponential \eqref{ineq_Sn} below). 
From the optimization in $\lambda $ performed in the proof Lemma \ref{RHO} (see equation \eqref{RLNG}), the contribution $\frac{\rho \lambda}{\Gamma_n}$ appearing in \eqref{def_aleph} will be large in the regime of \textit{super Gaussian} deviations (see as well Remark 
\ref{REM_RESTE_LAMBDA_GAMMA_N}). The above choice of $r_{k,n} $ actually permits
to control the remainders in all the considered regimes. 

For $\theta\in (\frac 13,1) $, the choice in \eqref{rn} can seem natural in order to absorb the term \\ $\prod_{k=1}^n\exp \big (\frac{ \rho q \lambda}{ \Gamma_n} C \gamma_k^{1/2} \exp(-\frac{ r_{k,n}^2}{16}  )\big)=\exp\big(\frac{ \rho q \lambda}{ \Gamma_n} C \Gamma_n^{(1/2)} \exp(-\frac{ r_{n}^2}{16}  )\big)$ coming from the iteration of \eqref{def_aleph}. For $\theta=1 $, the choice is a bit different due to the associated logarithmic explosion rates (i.e. $\Gamma_n \asymp \ln(n) $).
\textcolor{black}{Actually, for the \textit{Gaussian deviations} ($\frac a{\sqrt{\Gamma_n}} \to 0$), we have $ \frac{\rho q \lambda}{\Gamma_n} \to_n 0$, see again \eqref{RLNG} and Remark \ref{REM_RESTE_LAMBDA_GAMMA_N} below, and the term $\frac{\rho q \lambda}{\Gamma_n}$ could be removed in \eqref{rn}.}
On the other hand, the contribution $\exp \big ( \frac{ \rho^2 q^2 \lambda^2 }{\Gamma_n^2} C \gamma_k^{3/2}  r_{k,n}^2\big) $ will eventually yield a negligible contribution in the polynomial appearing in Lemma \ref{Mn}. We refer to the proof of Lemma \ref{CTR_T1_N} for details.
\end{remark}
\begin{lemme}[Control of ``super martingality default" of $S_n$] \label{rn_choice}
There exist non negative sequences $( \mathscr R_n)_{n \geq 1}$, $(e_{n})_ {n\geq 1}$ s.t. $ \mathscr R_n \underset{n}{\longrightarrow} 1$, $e_{n}\underset{n}{\longrightarrow} 0$, and for all $n \geq 1$:
\begin{equation}\label{ineq_Sn}
\E[ S_n ] \leq\prod_{k=1}^n \aleph_{k,n}(\lambda, \gamma_k,r_{k,n}) = \mathscr R_n  \exp \Big (  \big(\frac{ \rho^2 q^2 \lambda^2 }{\Gamma_n}    +\frac{ \rho^4 q^4 \lambda^4 }{\Gamma_n^3}  \big)e_{n}  \Big ).
\end{equation}
\end{lemme}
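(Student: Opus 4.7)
The plan is to unwind $\E[S_n]$ by iterating the one-step estimate of Lemma~\ref{CTR_T1_N}, then to estimate the resulting deterministic product of truncation weights $\aleph_{k,n}$ using the explicit choice of $r_{k,n}$ in \eqref{rn}. First I would write $S_n = S_{n-1}T_n$ and, using the $\F_{n-1}$-measurability of $S_{n-1}$ together with the tower property, apply Lemma~\ref{CTR_T1_N} to get
\[
\E[S_n] \le \aleph_{n,n}(\lambda,\gamma_n,r_{n,n})\, \E\Big[S_{n-1}\exp\Big(-\sum_{i=1}^{n-1} C_{\eqref{EXPR_GRAD}} \gamma_i^2 V_{i-1}\Big)\Big].
\]
The exponential Lyapunov damping produced by Lemma~\ref{CTR_T1_N} is precisely what is needed to cancel the analogous Lyapunov contributions hard-coded into $T_{n-1},\ldots,T_1$ by \eqref{Def_T}; hence peeling off the factors one at a time, conditioning on the increasing $\sigma$-fields, delivers the first inequality of \eqref{ineq_Sn}, namely $\E[S_n]\le \prod_{k=1}^n \aleph_{k,n}$.

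Next I would factor the product according to \eqref{def_aleph} and use $\prod(1+x_k)\le \exp(\sum x_k)$ on the non-exponential part. This reduces the problem to estimating three deterministic sums:
\[
\Sigma_1 := \sum_{k=1}^n e^{-C r_{k,n}^2},\quad \Sigma_2:=\sum_{k=1}^n \gamma_k^{1/2}e^{-r_{k,n}^2/4},\quad \Sigma_3:=\sum_{k=1}^n \gamma_k^{3/2} r_{k,n}^2.
\]
For $\theta\in(\tfrac13,1)$ the threshold $r_{k,n}\equiv r_n\asymp (1+\rho q\lambda/\Gamma_n)(\Gamma_n/\Gamma_n^{(3/2)})^{1/4}$ grows polynomially in $n$, so $\Sigma_1$ and $\Sigma_2$ are $O(n\,e^{-cn^\alpha})$ and will be absorbed into $\mathscr R_n\to 1$. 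The third sum equals $r_n^2\,\Gamma_n^{(3/2)}\asymp (1+\rho q\lambda/\Gamma_n)^2\sqrt{\Gamma_n\Gamma_n^{(3/2)}}$; multiplying by $\rho^2 q^2 \lambda^2/\Gamma_n^2$, expanding $(1+\rho q\lambda/\Gamma_n)^2$, and using AM--GM on the cross-term $\rho^3 q^3\lambda^3/\Gamma_n^2$ to reabsorb it into $\rho^2 q^2\lambda^2/\Gamma_n$ and $\rho^4 q^4\lambda^4/\Gamma_n^3$, yields exactly the two-frequency exponent appearing in \eqref{ineq_Sn} with common prefactor $e_n\asymp \sqrt{\Gamma_n^{(3/2)}/\Gamma_n}\to 0$.

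For the borderline case $\theta=1$, $r_{k,n}$ is $k$-dependent and $\Gamma_n\asymp \ln n$. The point is that $\sum k^{-3/2}\ln(k+1)$ converges, so $\Sigma_3\asymp (1+\rho q\lambda/\Gamma_n)^2\sqrt{\ln(n+1)}$ still delivers the same two-frequency bound with $e_n\asymp 1/\sqrt{\ln n}\to 0$; the sums $\Sigma_1,\Sigma_2$ vanish since $e^{-Cr_{k,n}^2}=k^{-C\sqrt{\ln n}(\cdots)}$ produces a convergent series whose total tends to $0$ in $n$. The hard part is maintaining the precise two-term structure $\rho^2 q^2 \lambda^2/\Gamma_n+\rho^4 q^4\lambda^4/\Gamma_n^3$ uniformly in $\lambda$ and $\rho$ across every regime of $\theta$: the cross-term from expanding $(1+\rho q\lambda/\Gamma_n)^2$ must be reabsorbed cleanly, and the constant implicit in \eqref{rn} must be taken large enough (namely $r_{1,1}>\bar c\sqrt{\gamma_1}\|\sigma\|_\infty\|\nabla\varphi\|_\infty$) so that the Gaussian-type bound of Lemma~\ref{CTR_T1_N} is applicable for every $k\in\leftB 1,n\rightB$, including small $k$.
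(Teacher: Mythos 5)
Your proof proposal follows exactly the route the paper takes: iterate Lemma~\ref{CTR_T1_N} one step at a time to get $\E[S_n]\le\prod_{k=1}^n\aleph_{k,n}$, split each $\aleph_{k,n}$ according to \eqref{def_aleph}, reduce to three deterministic sums, and show the $\Sigma_3$ contribution produces the two-term exponent $(\rho^2q^2\lambda^2/\Gamma_n+\rho^4q^4\lambda^4/\Gamma_n^3)e_n$ (the convexity/AM--GM reabsorption of the cross-term $(\rho q\lambda/\Gamma_n)^3$ is exactly the computation the paper performs for $\mathcal R_n^2$). Your discussion of the $\theta=1$ case with the convergent series $\sum k^{-3/2}\ln(k+1)$ is also the paper's argument.

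There is, however, one subtlety you misplace. You flag the ``hard part'' as the cross-term in $\Sigma_3$ and the constant in \eqref{rn}, but in fact the delicate step is the factor you wrote as $\Sigma_2$. The contribution in question is $\mathcal R_n^{12}=\exp\big(\frac{\rho q\lambda}{\Gamma_n}C\,\Gamma_n^{(1/2)}e^{-r_n^2/4}\big)$, and the prefactor $\frac{\rho q\lambda}{\Gamma_n}$ is not bounded: for the optimized $\lambda=\lambda_n$ of Lemma~\ref{RHO_optim}, $\frac{\rho\lambda_n}{\Gamma_n}\to+\infty$ in the super-Gaussian regime (Remark~\ref{REM_RESTE_LAMBDA_GAMMA_N}). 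So the bound ``$\Sigma_2=O(ne^{-cn^\alpha})$, absorbed into $\mathscr R_n\to1$'' is not uniform in $\lambda$ as stated. The cure---and this is precisely why $r_{k,n}$ in \eqref{rn} carries the factor $(1+\rho q\lambda/\Gamma_n)$---is to split $r_n^2\ge\frac{C}{2}\big((\rho q\lambda/\Gamma_n)^2+1\big)(\Gamma_n/\Gamma_n^{(3/2)})^{1/2}$, use the $\lambda$-dependent piece to cancel the prefactor via the elementary inequality $x^\beta e^{-x^2}\le C_\zeta e^{-C_\zeta^{-1}x^2}$ (the paper's \eqref{GR_COMP}), and keep the $\lambda$-free piece for the $n\to\infty$ decay. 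The analogous remark applies to $\mathcal R_n^{11}$, though there the $\lambda$-dependence only helps. Without this step, $\mathcal R_n^{12}$ does not go to $1$ and the claimed $\lambda$-independent remainder $\mathscr R_n$ cannot be produced; the proof of the super-Gaussian bound (Theorem~\ref{THM_CARRE_CHAMPS_super_Gaussian}), which feeds precisely this $\lambda_n$ regime into Lemma~\ref{rn_choice}, would break.
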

Observe that a term in $\lambda^4 $ appears here for the control of $\E[S_n] $. This is specifically due to the unbounded contributions. Namely, the exponential term in \eqref{ineq_Sn} comes from $\exp(\frac{\rho^2q^2\lambda^2}{\Gamma_n^2}C\gamma_k^{3/2}r_{k,n}^2) $ in \eqref{def_aleph}, the definition of $r_{k,n} $ in \eqref{rn}, and using as well that $e_n \asymp \frac{\Gamma_n^{(3/2)}}{\Gamma_n} \underset{n \to \infty}\rightarrow 0$. The other terms in \eqref{def_aleph}, corresponding to sub-Gaussian tails, give the remainder ${\mathscr R_n} $.

Note now carefully that, reproducing the arguments of the bounded case and using as well Lemma \ref{rn_choice} to control $\E[S_n] $ yields that Lemma \ref{Mn} remains valid, up to a modification of the remainders $e_n $. The proof then follows similarly to the bounded case. To sum up, the specificity of the unbounded innovations was to precisely control the Lipschitz constants, considering a suitable truncation, as well as the ``martingality default" of $S_n $ appearing in ${\mathscr T}_1 $.

\section{Proofs of technical lemmas}
\setcounter{equation}{0}
\label{SEC_TEC}

\subsection{Remainders from the Taylor decomposition}

\begin{proof}[Proof of Lemma \ref{HMP_lemmas}]

From the notations in \eqref{DECOUP}, we recall \eqref{nu_nA_phi_psi_R}: 
$\nu_n ({\mathcal A}\varphi)=\frac{1}{\Gamma_n}(  \widetilde  R_n-M_n)$.
The idea is now to write for $a, \lambda>0 $:
\begin{eqnarray}\label{Tchebytchev_holder}
\P\big[ \sqrt{\Gamma_n}\nu_n({\mathcal A} \varphi)  
\geq a\big]&\le&
\exp \big(-\frac{a \lambda}{\sqrt{\Gamma_n}}\big)  \E \Big[\exp \big (\frac{\lambda }{\Gamma_n}(  \widetilde  R_n-M_n) \big) \Big]\notag\\
&\le & \exp\big(-\frac{a \lambda}{\sqrt{\Gamma_n}}\big)\E \Big[\exp\big(-\frac{q \lambda }{\Gamma_n}M_n\Big) \Big]^{1/q}\E \Big[\exp\big(\frac{p \lambda }{\Gamma_n}| \widetilde R_n| \big) \Big]^{1/p},\label{EXPLI_DEV}
\end{eqnarray}
for $ \frac 1p+\frac 1q=1,\ p,q>1$.
We rewrite the Taylor expansion with the same notations as in \cite{hon:men:pag:16}:
$\widetilde R_n=L_n-  ( D_{2,b,n}+D_{2,\Sigma,n}+\bar G_n  ) $ where:
\begin{eqnarray}
\label{DECOUP_lemme_HMP}
D_{2,b,n}&:=&\sum_{k=1}^n \textcolor{black}{\gamma_k \int_0^1 \langle \nabla  \varphi (X_{k-1} + t \gamma_k b_{k-1})-\nabla \varphi(X_{k-1}),b_{k-1}\rangle dt},\nonumber\\
D_{2,\Sigma,n}&:=&\frac 12 \sum_{k=1}^n \gamma_k \Tr\Big(  \big(D^2 \varphi ( X_{k-1} +  \gamma_k b_{k-1} ) - D^2\varphi( X_{k-1}) \big) \Sigma_{k-1}^2 \Big),\nonumber\\
\bar G_{n}&:=&\sum_{k=1}^n \E\,[\psi_k(X_{k-1}, U_k)|\F_{k-1}],\notag\\
L_n&:=&\varphi ( X_n) - \varphi ( X_0).
\end{eqnarray}
From \eqref{DECOUP_lemme_HMP},   \eqref{Tchebytchev_holder} and the Cauchy-Schwarz inequality, we get:
\begin{eqnarray}\label{Holder_lemmas_hmp}
\P\big[  \sqrt{\Gamma_n}\nu_n({\mathcal A} \varphi)  \geq a\big]
\leq  \exp\big( - \frac{a\lambda}{\sqrt{\Gamma_n}} \big)
 \Big( \E \exp\big( -\frac{q \lambda}{ \Gamma_n}M_n\big)\Big)^{\frac1q}
\times \Big(\E \exp \big( \frac{2p \lambda}{ \Gamma_n} \big| L_n\big| \big) \Big)^{\frac 1{2p}}
\nonumber\\
\times \Big(\E \exp\big( \frac{4p \lambda}{ \Gamma_n} \big| \bar G_n\big|\big) \Big)^{\frac 1{4p}}
\Big(\E\exp\big( \frac{8p \lambda}{ \Gamma_n}\big| D_{2,\Sigma,n}\big| \big) \Big)^{\frac1{8p}}
\Big(\E\exp\big( \frac{8p \lambda}{ \Gamma_n}\big| D_{2,b,n}\big| \big) \Big)^{\frac 1{8p}}. 
\label{DECOUP_TCHEB}
\end{eqnarray}
The term $L_n$ in \eqref{Holder_lemmas_hmp} is controlled in Lemma 4 in \cite{hon:men:pag:16} (for $j=2$ therein):
\begin{equation}\label{Ln}
\Big(\E \exp\big( 4p \lambda \frac{|L_n| }{ \Gamma_n}\big) \Big)^{\frac{1}{4p}} \leq 
(I_V^1)^{\frac{1}{4p}} \exp \big(\frac{3 p C_{V,\varphi}^2 \lambda^2}{ c_V \Gamma_n^2}+\frac{c_V}{p}\big)
= \mathscr R_n \exp\big(\frac{ \lambda_n^2}{  \Gamma_n} e_n\big),
\end{equation}
for $p=p_n \to_n + \infty$ s.t. $\frac{p}{\Gamma_n} \to_n 0$.

Thanks to Lemma 3 in \cite{hon:men:pag:16}, 
we obtain:
$$
\frac{|\bar G_n|}{\sqrt{\Gamma_n}}\le a_n:= \frac{ [\varphi^{(3)}]_{\beta}  \big\|\sigma\big\|_\infty^{3+\beta} \E\big[|U_1|^{3+\beta} \big] }{(1+\beta)(2+\beta)(3+\beta)}   \frac{\Gamma_n^{(\frac{3+\beta}2)}}{\sqrt{\Gamma_n}}, \,a.s.  \ .
$$
Moreover, $a_n\underset{n}{\ra} a_\infty=0 $ for $\theta\in(\frac{1}{2+\beta},1] $. Hence, for all $p>1$:
\begin{equation}
\label{CTR_BIAS_HOLDER}
\Big(\E\exp\big(\frac{4p\lambda}{\Gamma_n}|\bar G_n|\big)\Big)^{\frac1{4p}}\le \exp\big(\frac{ \lambda }{\sqrt{\Gamma_n}}a_n \big)\le \exp\big(\frac{ \lambda^2}{2\Gamma_np}+\frac{a_n^2p}{2} \big)= \mathscr R_n  \exp\big(\frac{ \lambda^2}{\Gamma_n} e_n \big).
\end{equation}
for $p=p_n \to_n + \infty$ s.t. $a_n^2 p\to_n 0$.

We handle the term  $D_{2,\Sigma,n}$ in \eqref{Holder_lemmas_hmp} by Lemma 5  in \cite{hon:men:pag:16}:
there exists  $C_{1}:=C_1(\A{A},\varphi)>0$ such that
\begin{equation}\label{ineq_rest_s2_phi}
\begin{split}
\Big( \E  \exp\big( \frac{4p \lambda_n}{ \Gamma_n}  \big|D_{2,\Sigma,n}\big|\big) \Big)^{\frac{1}{4p}} 
\leq \exp\big( C_{1}\frac{p \lambda_n^2  (\Gamma_n^{(2)})^2}{  \Gamma_n^2} \big) (I_V^1)^{\frac1{4p}}
= \mathscr R_n \exp \big(\frac{ \lambda_n^2}{  \Gamma_n} e_n \big),
\end{split}
\end{equation}
for $p=p_n \to_n + \infty$ s.t. $p\frac{(\Gamma_n^{(2)})^2}{\Gamma_n} \to_n 0$ (we recall that for all $\theta \in ( \frac 13 , 1]$, $\frac{\Gamma_n^{(2)}}{\sqrt{\Gamma_n}} \underset n \rightarrow 0$).

We deal with the term $D_{2,b,n}$ from \eqref{Holder_lemmas_hmp}. Because $x \mapsto \langle \nabla \varphi(x), b(x) \rangle$ is Lipschitz continuous,
 thanks to Lemma 5 in \cite{hon:men:pag:16}, we know that there exists  $C_{2}:=C_2(\A{A},\varphi)>0$ such that:
\begin{equation}\label{ineq_rest_b2_phi}
\begin{split}
\Big(\E  \exp\big( \frac{4p \lambda_n}{ \Gamma_n} \big|D_{2,b,n}\big|\big) \Big)^{\frac{1}{4p}} 
\leq \exp \big(C_{2} ( \frac{3p\lambda_n^2 (\Gamma_n^{(2)})^2}{2\Gamma_n^2}+ \frac{1}{2p} ) \big )(I_V^1)^{\frac1{4p}}
= \mathscr R_n \exp \big(\frac{ \lambda_n^2}{  \Gamma_n} e_n \big),
\end{split}
\end{equation}
also for $p=p_n \to_n + \infty$ s.t. $p\frac{(\Gamma_n^{(2)})^2}{\Gamma_n} \to_n 0$.

We gather \eqref{Ln}, \eqref{CTR_BIAS_HOLDER}, \eqref{ineq_rest_s2_phi} and \eqref{ineq_rest_b2_phi} into \eqref{Holder_lemmas_hmp}, which
allows us to control the remainder involving $\widetilde  R_n$ previously decomposed in \eqref{DECOUP_lemme_HMP}. 
There are non-negative sequences $(\mathscr R_n)_{n \geq 1}$, $(e_n)_{n \geq 1} $ s.t.
$\lim \limits_{n \to + \infty} \mathscr R_n =1$, $ \lim \limits_{n \to + \infty} e_n=0$ and:
\begin{equation}\label{DEV_PRELIM}
\P\big[  \sqrt{\Gamma_n}\nu_n({\mathcal A} \varphi) \geq a\big] 
\leq \exp\Big( - \frac{a\lambda}{\sqrt{\Gamma_n}} \Big) \left( \E \exp\Big( -\frac{q \lambda}{ \Gamma_n}M_n\Big)\right)^{\frac1q}
 \exp\left(\frac{ \lambda_n^2}{  \Gamma_n} e_n\right)  \mathscr R_n.
\end{equation} 
\end{proof}
\subsection{\textcolor{black}{Asymptotics in the parameter $\rho $}}\label{proof_lemma_RHO}
We first begin with the proof of Lemma \ref{RHO} which is purely analytical and rather independent of our probabilistic setting. We recall that we use it for both bounded and unbounded innovations.

\begin{proof}[Proof of Lemma \ref{RHO} ]

From the expression of $\Phi_n(a,\rho)$ in Theorem \ref{Exact_OPTIM}, remember that
\begin{equation*}
\Phi_n(a,\rho)= 
\Big( \frac{a}{\sqrt{\Gamma_n}\tilde B_n }+\big(\frac{a^2}{\tilde B_n^2\Gamma_n}+ (\rho-1)\big(\frac{2 \tilde A_n}{3\tilde B_n}\big)^3\big)^{\frac 12}\Big)^{\frac 13}+\Big( \frac{a}{\sqrt{\Gamma_n}\tilde B_n }-\big(\frac{a^2}{\tilde B_n^2\Gamma_n}+ (\rho-1)\big(\frac{2 \tilde A_n}{3\tilde B_n}\big)^3\big)^{\frac 12}\Big)^{\frac 13}.
\end{equation*}
Here, $\rho>1 $ is a \textit{free} parameter. Let us set:
\begin{equation}\label{CHOIX_RHO}
\rho-1 := \xi \frac{27}{8} \frac{\tilde B_n a^2}{\tilde A_n^3 \Gamma_n},
\end{equation}
for a parameter $\xi:=\xi(a,n)>0$ to optimize.
This choice yields 
\begin{equation*}
\label{REL_PHI_AL}
 \Phi_n(a,\rho)= \frac{a^{1/3}}{\tilde B_n^{1/3} \Gamma_n^{1/6}} \big ( (1+ \sqrt{1+ \xi})^{1/3} + (1 - \sqrt{1+\xi})^{1/3} \big).
 \end{equation*}
Hence, from the definition of $\lambda_n$ in \eqref{lambda}:
\begin{equation}
\label{RLNG}
\frac{\rho \lambda_n}{\Gamma_n}=\frac{1}{2}(\rho-1)^{1/3} \Phi_n(a,\rho) = \frac{3}{2^2} \frac{ a}{\tilde A_n \sqrt{\Gamma_n} } \xi^{1/3} \big ( (1+ \sqrt{1+ \xi})^{1/3} + (1 - \sqrt{1+\xi})^{1/3} \big).
\end{equation}
We point out that 
$
\xi \longmapsto  \xi^{1/3}\big( (1+ \sqrt{1+\xi})^{1/3}+(1-\sqrt{1+\xi})^{1/3} \big )
$ 
is a bounded function from $[0,+\infty)$ to $[0,\frac{2}{3})$. 
 In fact, 
\begin{eqnarray}\label{limite}
&&\xi^{1/3}\big( (1+ \sqrt{1+\xi})^{1/3}+(1-\sqrt{1+\xi})^{1/3} \big )
\nonumber \\
 &\underset{\xi \to + \infty} {=}& \xi^{1/3} (1+\xi)^{1/6}\big ( 1+ \frac{1}{3 \sqrt{1+\xi}} - ( 1 - \frac{1}{3\sqrt{1+\xi}}) + o(\frac{1}{\sqrt{1+\xi}}) \big )
\underset{\xi \to + \infty} {\sim}\frac{2}{3}.
\end{eqnarray}
From the definition of $P_{\min}(a,\rho,\Gamma_n)=P( \lambda_n) =\min_{x>0} P(x)$ in \eqref{P_min_def} and from the definition of  $P(\lambda)$ in \eqref{ineq_P},
\begin{eqnarray}
P \big(\lambda_n \big)&=& - \frac{(\rho-1)^{1/3}}{\rho} \frac{\sqrt{\Gamma_n} \Phi_n(a,\rho)}{8} ( 3a -  \sqrt{\Gamma_n}  (\rho-1)^{1/3} \tilde A_n \Phi_n(a,\rho))
\notag\\
&=&-\frac{(\rho-1)^{1/3}\Phi_n(a,\rho)}{2}\times \frac{\sqrt{\Gamma_n}}{4\rho}\times ( 3a -  \tilde A_n \sqrt{\Gamma_n}(\rho-1)^{1/3} \Phi_n(a,\rho)  )\notag\\
&\underset{\eqref{CHOIX_RHO}, \eqref{RLNG}}{=}& - \frac{3^2}{2^4} \frac{ a^2}{\tilde A_n ( \xi \frac{27 \tilde B_n a^2}{8 \tilde A_n^3 \Gamma_n}+1) } \xi^{1/3} \big ( (1+ \sqrt{1+ \xi})^{1/3} + (1 - \sqrt{1+\xi})^{1/3} \big) 
\notag\\
&& \times \Big ( 1 - \frac{ \xi^{1/3}}{2} \big( (1+ \sqrt{1+ \xi})^{1/3} + (1 - \sqrt{1+\xi})^{1/3} \big) \Big)
\notag\\
&=:& - \frac{3^2}{2^4 \tilde A_n} a^2 f_{\Psi}(\xi),\label{PREAL_OPTIM_POLY}
\end{eqnarray}
for 
\begin{equation}\label{function}
f_\Psi : \xi \in \R_+\longmapsto \frac{g(\xi)}{\Psi \xi+1},
\end{equation}
where 
\begin{equation*}
g : \xi \longmapsto \xi^{1/3}\big( (1+ \sqrt{1+\xi})^{1/3}+(1-\sqrt{1+\xi})^{1/3} \big )\Big  (1- \frac{\xi^{1/3}}{2} \big ((1+ \sqrt{1+\xi})^{1/3}+(1-\sqrt{1+\xi})^{1/3} \big ) \Big  ),
\end{equation*}
and
\begin{equation}\label{Psi_def}
\Psi:=\frac{27}{8} \frac{\tilde B_n a^2}{\tilde A_n^3 \Gamma_n} \Big (\overset{\eqref{CHOIX_RHO}} =\frac{\rho-1}{\xi} \Big ).
\end{equation}
We bring to mind that we consider ``Gaussian deviations'', namely $\frac{a}{\sqrt{\Gamma_n}} \underset{n}{\rightarrow} 0 $.

From \eqref{PREAL_OPTIM_POLY} and the asymptotic of $\tilde A_n $ defined in \eqref{eq:ABtilde}, i.e. $4 \tilde A_n \rightarrow_n 2\nu(|\sigma^*\nabla \varphi|^2) $, we want to choose $\xi:=\xi(a,n) $ s.t. $\Lambda(\xi):= \frac{3^2}{2^4 }  f_\Psi(\xi) \underset{\frac{a}{\sqrt{\Gamma_n}} \underset{n}{\rightarrow} 0}{\rightarrow}\frac 14  $.
This would indeed yield
$P(\lambda_n) \underset{\frac{a}{\sqrt{\Gamma_n}} \underset{n}{\rightarrow} 0} {\sim }- \frac{a^2}{4\tilde A_n} $. 
From the definition of $\Psi$ in \eqref{Psi_def}, we have $\Psi= \frac{27 \tilde B_n a^2}{8 \tilde A_n \Gamma_n} \underset {\frac a{\sqrt{\Gamma_n}} \to 0} {\longrightarrow} 0$. Observe then, that, taking $\xi $ going to infinity such that 
\begin{equation}\label{COND}
\xi\Psi\underset{\frac{a}{\sqrt{\Gamma_n}}\rightarrow 0}{\rightarrow} 0
\end{equation}
 yields $\Lambda(\xi)=\frac{3^2}{2^4 } \frac{g(\xi)}{\Psi\xi+1} \underset{ \frac a{\sqrt{\Gamma_n}} \to 0 
}{\rightarrow} \frac 14$, noting from \eqref{limite} and the above definition of $g$  that $g(\xi)\underset{\xi \rightarrow +\infty}{\rightarrow}\big(\frac 23\big)^2  $.
\end{proof}
\begin{remark}[Controls of the optimized parameters $\lambda$ and $\rho$ for \textit{Gaussian deviations}]\label{REM_RESTE_LAMBDA_GAMMA_N_asymp}
We give here some useful estimates to control the remainder terms in the truncation procedure associated with unbounded innovations, see proof of Lemma \ref{rn_choice}. 
They specify the behaviour of the quantity $\frac{\rho \lambda_n}{\Gamma_n} $.

In the regime of \textit{Gaussian deviations},
 we get from \eqref{RLNG} and \eqref{limite}:
\begin{equation}
\label{REG_1_RLG}
\frac{\rho \lambda_n}{\Gamma_n} \underset{ \frac{a}{\sqrt{\Gamma_n}} 
{\rightarrow} 0}{\longrightarrow} 0
\textcolor{black}{, \, \lambda_n \underset{ \frac{a}{\sqrt{\Gamma_n} }\to 0}  \asymp a \sqrt{\Gamma_n} 
.}
\end{equation}
\end{remark}




\subsection{Technical Lemmas for Unbounded Innovations}
We proceed with the proof of Lemmas  \ref{CTR_T1_N}  and \ref{rn_choice} which are specifically needed for unbounded innovations.

\begin{proof}[Proof of Lemma \ref{CTR_T1_N}]
We use a partition at the threshold $r_{k,n}$ on the variable $U_k$, to control finely the term $ \Delta_k(X_{k-1},U_k)$:
\begin{eqnarray}\label{Holder_Hn}
&&\E \Big [ \exp\big( - \frac{ \rho q \lambda}{ \Gamma_n} \Delta_k(X_{k-1},U_k)\big) | \F_{k-1}\Big ]
 = \E \Big [ \exp\big(-  \frac{ \rho q \lambda}{ \Gamma_n}  \Delta_{n}(X_{k-1},U_k) \big)\mathds{1}_{|U_k|\le r_{k,n} }| \F_{k-1}\Big ]
\nonumber \\
&&+ \E \Big [ \exp\big( -\frac{ \rho q \lambda}{ \Gamma_n} \Delta_k(X_{k-1},U_k) \big) \mathds{1}_{|U_k| > r_{k,n}} | \F_{k-1}\Big  ]=: T_{k,s}^M+T_{k,l}^M,
\end{eqnarray}
where $T_{k,s}^M$ and $T_{k,l}^M$  stand for the contributions in \textcolor{black}{$\E[T_k|\F_{k-1}] \exp(\sum_{i=1}^{k-1} C_{\eqref{EXPR_GRAD}}\gamma_i^{2} V_{i-1})$} associated with the martingale increment $\Delta(X_{k-1},U_k) $ for which the innovation is respectively \textit{small}  and \textit{large}.

Let us first write:
\begin{equation*}
T_{k,s}^M =\E \Big [ \exp\big( - \frac{ \rho q \lambda}{ \Gamma_n}  \Delta_{n}(X_{k-1},U_k) \big)\mathds{1}_{|U_k|\le r_{k,n} }| \F_{k-1}\Big ]=\int_{|u|\le r_{k,n}}\exp\big( - \frac{ \rho q \lambda}{ \Gamma_n}  \Delta_{n}(X_{k-1},U_k) \big) \mu(du).
\end{equation*}
Observe now that, similarly to the computations of Section \ref{U_bounded} for bounded innovations, see equation \eqref{omega1_LIP},  if $|u|\le r_{k,n} $, $u\in \R^r\mapsto  \Delta_{n}(X_{k-1},u) $ is s.t.
\begin{equation}
\label{LIP_UNBOUNDED}
[\Delta_{n}(X_{k-1},\cdot)]_{1,B(0,r_{k,n})}^2\le \gamma_k |\sigma_{k-1}^*\nabla \varphi(X_{k-1})|^2+C\gamma_k^{3/2}r_{k,n}^2+C_{\eqref{EXPR_GRAD}}\gamma_k^{2} V_{k-1},
\end{equation}
where  $[\Delta_{n}(X_{k-1},\cdot)]_{1,B(0,r_{k,n})}^2$ denotes the Lipschitz modulus of $\Delta_{n}(X_{k-1},\cdot)$ restricted on the ball  $B(0,r_{k,n})$ of $\R^r $ with radius $r_{k,n} $. 

Let us now extend $u \in B(0,r_{k,n}) \mapsto \Delta_{n}(X_{k-1},u) $ into a Lipschitz function on the whole set $ \R^r$ which globally verifies the bound of equation \eqref{LIP_UNBOUNDED}. The easiest way to do so is to consider:
$$u\in \R^r \mapsto \bar \Delta(X_{k-1},u)=\Delta\big(X_{k-1},\Pi_{\bar B(0,r_{k,n})}(u)\big),$$
where $\Pi_{\bar B(0,r_{k,n})}(\cdot) $ denotes the projection on $\bar B(0,r_{k,n}) $, namely for $u\in \bar B(0,r_{k,n})$, $\Pi_{\bar B(0,r_{k,n})}(u)=u $, for $u\not \in B(0,r_{k,n})$, $\Pi_{\bar B(0,r_{k,n})}(u)=\frac{u}{|u|}r_{k,n} $. It is readily seen that:
$$[\bar \Delta(X_{k-1},\cdot)]_1^2\le  \gamma_k |\sigma_{k-1}^*\nabla \varphi(X_{k-1})|^2+C\gamma_k^{3/2}r_{k,n}^2+C_{\eqref{EXPR_GRAD}}\gamma_k^{2} V_{k-1}.$$
Hence:
\begin{eqnarray}
T_{k,s}^M&=&\int_{|u|\le r_{k,n}}\exp\big( - \frac{ \rho q \lambda}{ \Gamma_n}  \bar \Delta_{n}(X_{k-1},U_k) \big) \mu(du)
\le \E\Big [ \exp\big( -\frac{ \rho q \lambda}{ \Gamma_n}  \bar \Delta_{n}(X_{k-1},U_k) \big)| \F_{k-1}\Big ]\notag\\
&\underset{\A{GC}}{\le}&\exp\big(\frac{ \rho^2 q^2 \lambda^2}{2 \Gamma_n^2} [\bar \Delta(X_{k-1,\cdot})]_1^2-\frac{ \rho q \lambda}{ \Gamma_n} \E[\bar \Delta(X_{k-1},U_k)]\big)\notag\\
&\le &\exp\big(\frac{ \rho^2 q^2 \lambda^2}{2 \Gamma_n^2} (\gamma_k |\sigma_{k-1}^*\nabla \varphi(X_{k-1})|^2+C\gamma_k^{3/2}r_{k,n}^2+C_{\eqref{EXPR_GRAD}}\gamma_k^{2} V_{k-1})-\frac{ \rho q \lambda}{ \Gamma_n} \E[\bar \Delta(X_{k-1},U_k)]\big).\label{PREAL_BD_TKSM}
\end{eqnarray}
Bearing in mind that $\E[\Delta(X_{k-1},U_k)]=0 $, and observe now:
\begin{eqnarray*}
\E[\bar \Delta(X_{k-1},U_k)]
&=& \E[\Delta(X_{k-1},U_k)\mathds{1}_{|U_k|<r_{k,n}}]
+ \E[\bar \Delta(X_{k-1},U_k)\mathds{1}_{|U_k|\geq r_{k,n}}]\\
&=& -\E[\Delta(X_{k-1},U_k)\mathds{1}_{|U_k|\ge r_{k,n}}]+\E[\Delta\big(X_{k-1},\Pi_{B(0,r_{k,n})}(U_k)\big)\mathds{1}_{|U_k|\ge r_{k,n}}]\\
&\le &\Big(\E[|\Delta(X_{k-1},U_k)|^2]^{1/2}+\E[|\Delta\big(X_{k-1},\Pi_{B(0,r_{k,n})}(U_k))|^2]^{1/2}\Big)\P[|U_k|\ge r_{k,n}]^{1/2}\\
&\le & C\gamma_k^{1/2}\exp(-\frac{r_{k,n}^2}{4}),
\end{eqnarray*}
exploiting \eqref{ineq_dens_inno}, \eqref{def_DELTA_K} and \eqref{def_Xi} for the last inequality. Plugging this bound into \eqref{PREAL_BD_TKSM} yields:
\begin{equation}
\label{CTR_TKS_M}
T_{k,s}^M\le \exp\big(\frac{ \rho^2 q^2 \lambda^2}{2 \Gamma_n^2} (\gamma_k |\sigma_{k-1}^*\nabla \varphi(X_{k-1})|^2+C\gamma_k^{3/2}r_{k,n}^2+ C_{\eqref{EXPR_GRAD}}\gamma_k^{2} V_{k-1})+C \gamma_k^{1/2} \frac{ \rho q \lambda}{ \Gamma_n}\exp(-\frac{r_{k,n}^2}{4})\big).
\end{equation}
The remaining term in \eqref{Holder_Hn}, involving also the large deviations of the innovation, can be controlled as follows:
\begin{eqnarray*}
T_{k,l}^M&=&\E \Big [ \exp\big(- \frac{\rho q \lambda}{ \Gamma_n} \Delta_k(X_{k-1},U_k) \big) \mathds{1}_{|U_k| > r_{k,n}} | \F_{k-1}\Big ]
\nonumber \\
&
\le&\E \Big [ \exp\big( -\frac{2 \rho q \lambda}{ \Gamma_n} \Delta_k(X_{k-1},U_k) \big)  | \F_{k-1}\Big ]^{1/2} \E \Big [ \mathds{1}_{|U_k| > r_{k,n}} \Big ]^{1/2} 
\nonumber \\
&
\leq& 2 \E \Big [ \exp\big( -\frac{2 \rho q \lambda}{ \Gamma_n} \Delta_k(X_{k-1},U_k) \big ) | \F_{k-1}\Big ]^{1/2} \exp(- \frac{ r_{k,n}^2}{4}  )
\nonumber \\
& \overset{\A{GC}}{\leq}& 2 \exp\big( \frac{ \rho^2 q^2 \lambda^2}{ \Gamma_n^2} \gamma_k \|\sigma \|_{\infty}^2 \| \nabla \varphi \|_{\infty}^2 - \frac{ r_{k,n}^2}{4}   \big ).
\end{eqnarray*}
Let us proceed from our definition of $r_{k,n}=r_{k,n}(\A{A},\lambda,\rho)$ in \eqref{rn} and write $r_{k,n}= (1+\frac{ \rho q \lambda}{ \Gamma_n} ) u_{k,n}$. One has that  for all $k \leq n$, $ u_{k,n} \underset{n}{\rightarrow} + \infty$. In other words, with the previous inequality, recalling $r_{k,n}^2\ge u_{k,n}^2(1+\frac{\rho^2q^2\lambda^2}{\Gamma_n^2}) $:
\begin{eqnarray}\label{GC_Delta_barHn1}
T_{k,l}^M
 &\leq& 2 \exp\big( \frac{  q^2 \rho^2 \lambda^2}{ \Gamma_n^2} (\gamma_k \|\sigma \|_{\infty}^2 \| \nabla \varphi \|_{\infty}^2 - \frac{u_{k,n}^2}{4}) - \frac{u_{k,n}^2}{4} \big ) 
\nonumber \\
&  \leq& 2 \exp\big( - C \frac{  q^2 \rho^2 \lambda^2}{ \Gamma_n^2}  \frac{u_{k,n}^2}{8}-\frac{u_{k,n}^2}{4} \big ) 
\leq 2 \exp \big( - C r_{k,n}^2 \big ),
\end{eqnarray}
since $u_{k,n}^2 > 16 \gamma_1  \|\sigma \|_{\infty}^2 \| \nabla \varphi \|_{\infty}^2 $ (\textcolor{black}{which explains our choice for the constant  $r_{k,n}$} in \eqref{rn}).
Plugging \eqref{CTR_TKS_M} and \eqref{GC_Delta_barHn1}  into \eqref{Holder_Hn} yields:
\begin{eqnarray*}
&&\E \Big [ \exp\big( -\frac{\rho q \lambda}{ \Gamma_n} \Delta_k(X_{k-1},U_k)\big) | \F_{k-1}\Big ]
\\ &\leq &
 \exp \Big ( 
\frac{ \rho^2 q^2 \lambda^2 }{ 2 \Gamma_n^2} \big( \gamma_k  | \sigma_{k-1}^* \nabla \varphi_{k-1} |^2 
+ C\gamma_k^{3/2}  r_{k,n}^2 +C_{\eqref{EXPR_GRAD}}\gamma_k^{2} V_{k-1}  
\big)
+
\frac{ \rho q \lambda}{ \Gamma_n} C \gamma_k^{1/2} \exp(-\frac{ r_{k,n}^2}{4}  )
 \Big )
\nonumber \\
&&+ 2 \exp \big( - C r_{k,n}^2 \big )
\nonumber \\
&\leq &
 \exp \big ( 
\frac{ \rho^2 q^2 \lambda^2 }{ 2 \Gamma_n^2} \big( \gamma_k  | \sigma_{k-1}^* \nabla \varphi_{k-1} |^2 
+ C\gamma_k^{3/2}  r_{k,n}^2 +C_{\eqref{EXPR_GRAD}}\gamma_k^{2} V_{k-1}  
\big)
+
\frac{ \rho q \lambda}{ \Gamma_n} C \gamma_k^{1/2} \exp(-\frac{ r_{k,n}^2}{4}  )
 \big )
\nonumber \\
&& \times \big ( 1+ 2 \exp ( - C r_{k,n}^2  ) \big )
\nonumber \\
&=:&\exp \big (\frac{ \rho^2 q^2 \lambda^2 }{2\Gamma_n^2} \gamma_k | \sigma_{k-1}^* \nabla \varphi_{k-1} |^2 +C_{\eqref{EXPR_GRAD}}\gamma_k^{2} V_{k-1}\big )\times \aleph_{k,n} (\lambda, \gamma_k,  r_{k,n}) ,
\end{eqnarray*}
where 
\begin{equation*}
\aleph_{k,n} (\lambda, \gamma_k,  r_{k,n}) 
:=
\Big( 1+ 2\exp \big( - C r_{k,n}^2 \big )\Big) \exp \big (\frac{ \rho q \lambda}{ \Gamma_n} C \gamma_k^{1/2} \exp(-\frac{ r_{k,n}^2}{4}  ) \big) 
 \exp \big ( \frac{ \rho^2 q^2 \lambda^2 }{\Gamma_n^2} C  \gamma_k^{3/2}  r_{k,n}^2
\big ).
\end{equation*}
We have thus isolated the ``significant" term $\exp \big (\frac{ \rho^2 q^2 \lambda^2 }{2\Gamma_n^2} \gamma_k | \sigma_{k-1}^* \nabla \varphi_{k-1} |^2 \big )$.
The result follows from the definition of $T_k$ in \eqref{Def_T}.
\end{proof}

\begin{proof}[Proof of Lemma \ref{rn_choice}]
We recall for convenience the definition of $r_{k,n}$ in \eqref{rn}:
\begin{equation*}
r_{k,n}=
\left\{
\begin{array}{ll}
&r_n= C (1+\frac{\rho q \lambda}{\Gamma_n} )  (\frac{\Gamma_n}{\Gamma_n^{(3/2)}} )^{1/4}, \ for \ \theta \in (\frac 13, 1),
\\
& C (  (1+\frac{\rho q \lambda}{\Gamma_n} ) \ln(n+1)^{1/4} \ln(k+1)^{1/2}
,  \ for \ \theta=1.
\end{array}
\right. 
\end{equation*}
From the above definition of $\aleph_{k,n} (\lambda, \gamma_k,  r_{k,n}) $, we introduce two remainders:
\begin{eqnarray}\label{prod_Aleph}
\mathcal R_n^1 &:=& \prod_{k=1}^n \Big( 1+ 2\exp \big( - C r_{k,n}^2 \big )\Big)
 \times \prod_{k=1}^n \exp \big (\frac{  \rho q \lambda}{ \Gamma_n} C \gamma_k^{1/2} \exp(-\frac{ r_{k,n}^2}{4}  ) \big)
\nonumber \\
&=:&\mathcal R_n^{11} \times \mathcal R_n^{12},
\nonumber \\
\mathcal R_n^2 &:=& 
 \exp \big ( \frac{ \rho^2 q^2 \lambda^2 }{\Gamma_n^2} C   \sum_{k=1}^n \gamma_k^{3/2} r_{k,n}^2
\big ),
\end{eqnarray}
that naturally appear when we iterate Lemma \ref{CTR_T1_N}. Precisely:
\begin{equation}\label{Sn}
\E[S_n] \leq 
\E[S_{n-1}] \aleph_{n,n} (\lambda, \gamma_n,  r_{n,n}) 
\leq \prod_{k=1}^n \aleph_{k,n} (\lambda, \gamma_k,  r_{k,n}) = \mathcal R_n^1 \times \mathcal R_n^2. 
\end{equation}

$ \bullet$ For $\theta \in (1/3,1)$:

We have chosen $r_{k,n}=r_n$ in \eqref{rn}, so, $\mathcal R_n^1 = \Big( 1+ 2\exp \big( - C r_n^2 \big )\Big)^n \exp \big (\frac{  \rho q \lambda}{ \Gamma_n} C \Gamma_n^{(1/2)} \exp(-\frac{r_{n}^2}{4}) \big)=\mathcal R_n^{11}\mathcal R_n^{12}$, and:
\begin{eqnarray*}
\mathcal R_n^{11} &=&\Big( 1+ 2\exp \big( - C r_{n}^2 \big ) \Big)^n 
\leq \exp \Big ( n \ln \big ( 1+2 \exp  ( - C   ( \frac{\Gamma_n}{\Gamma_n^{(3/2)}} )^{1/2}  ) \big )  \Big )
\\
&&\leq
\exp \big ( 2 n  \exp  ( - C  ( \frac{\Gamma_n}{\Gamma_n^{(3/2)}} )^{1/2}  )  \big ) 
\underset{n \to + \infty}{\longrightarrow} 1,
\end{eqnarray*}
as, for $\theta \in (1/3,1)$, $ \left( \frac{\Gamma_n}{\Gamma_n^{(3/2)}} \right)^{1/2}\ge \eta(n)$ where
\begin{equation}\label{eta_def}
\eta(n):=C\big(n^{(1-\theta)/2}\mathds{1}_{\theta\in (2/3,1)}+  (n^{1/3} \ln(n)^{-1}  )^{\frac 12} \mathds{1}_{\theta=2/3}+n^{\theta/4}\mathds{1}_{\theta\in(1/3,2/3)}\big),
\end{equation}
see also Remark \ref{r_n_aleph}. 
For the remaining of the proof, we will thoroughly exploit this kind of arguments. Precisely, we recall that:
\begin{equation}\label{GR_COMP}
\forall \zeta\in \R_+,\ \exists C_\zeta\ge 1,\  s.t.\  \forall 0\le \beta\le \zeta, \forall x\in \R_+,\ x^\beta\exp(-x^2)\le C_\zeta\exp(-C_\zeta^{-1}x^2).
\end{equation}
Thus, for the remaining term ${\mathcal R}_{n}^{12} $ in ${\mathcal R}_{n}^{1} $, exploiting \eqref{GR_COMP}\textcolor{black}{, up to a modification of the constant $C>0$ from line to line}:
\begin{eqnarray*}
{\mathcal R}_{n}^{12} &\le&\exp \big (\frac{  \rho q \lambda}{ \Gamma_n} C \Gamma_n^{(1/2)} \exp(-\frac{r_{n}^2}{4}) \big) \\
&\leq& \exp \Big (\frac{  \rho q \lambda}{ \Gamma_n} C \Gamma_n^{(1/2)}
 \exp \big (- \frac{\rho^2 q^2 \lambda^2}{\Gamma_n^2} \frac C{4} (\frac{\Gamma_n}{\Gamma_n^{(3/2)}} )^{1/2} \big) 
 \exp \big (-\frac C{4} (\frac{\Gamma_n}{\Gamma_n^{(3/2)}} )^{1/2} \big) 
\Big) 
\\
&\leq &\exp \Big ( C \Gamma_n^{(1/2)} (\frac{\Gamma_n^{(3/2)}}{\Gamma_n})^{1/4}
 \exp \big (-  \frac{\rho^2 q^2 \lambda^2}{\Gamma_n^2} \frac C{8} (\frac{\Gamma_n}{\Gamma_n^{(3/2)}} )^{1/2} \big) 
 \exp  (-\frac C{4} (\frac{\Gamma_n}{\Gamma_n^{(3/2)}} )^{1/2} ) 
\Big) 
\\
&\leq &\exp \big ( C \Gamma_n^{(1/2)}
 \exp  (-\frac C{8} (\frac{\Gamma_n}{\Gamma_n^{(3/2)}} )^{1/2} ) 
\big) 
\le \exp\big(Cn^{1-\theta/2}e^{-\eta(n)}\big)\overset{\eqref{eta_def},\eqref{GR_COMP}}{\underset{n \to + \infty}{\longrightarrow}} 1.
\end{eqnarray*}
Hence $\mathcal R_n^1=\mathcal R_n^{11}\mathcal R_n^{12} \underset{n}{\rightarrow} 1$.

Introducing 
\begin{equation*}
e_{n}^{\theta <1} :=C  \sqrt{ \frac{\Gamma_n^{(3/2)}} {\Gamma_n}}  \underset{n \to + \infty}{\longrightarrow} 0,
\end{equation*}
we get from the definition of $r_{k,n} $ in \eqref{rn} and \eqref{prod_Aleph}
\begin{equation*}
\mathcal R_n^2 \leq 
 \exp \Big ( \big(\frac{ \rho^2 q^2 \lambda^2 }{\Gamma_n}    +\frac{ \rho^4 q^4 \lambda^4 }{\Gamma_n^3} \big)e_{n}^{\theta <1}   \Big ).
\end{equation*}
Recalling, from \eqref{Sn}, that $\E[S_n]\le \mathcal R_n^1\mathcal R_n^2 $, \textcolor{black}{we thus get from the above computations}:
$$\E[S_n]\le  {\mathscr R}_n \exp \big ( (\frac{ \rho^2 q^2 \lambda^2 }{\Gamma_n}    +\frac{ \rho^4 q^4 \lambda^4 }{\Gamma_n^3} )e_{n}   \big ),$$
which gives the result for $\theta\in  (\frac 13,1) $.

$ \bullet$ For $\theta =1$, using the previous notations:
\begin{eqnarray*}
0 &\le& \ln(\mathcal R_n^{11})\leq 
 \ln \Big (\prod_{k=1}^n \big( 1+ 2\exp ( - C r_{k,n}^2  )\big)  \Big)
=   \sum_{k=1}^n \ln \Big( 1+ 2\exp \big( - C r_{k,n}^2 \big )\Big) 
\nonumber \\
&\leq& C \sum_{k=1}^n \exp \big( - C r_{k,n}^2 \big ) 
\leq C \sum_{k=1}^n \exp \big ( - C \ln(n+1)^{1/2} \ln(k+1) \big)  
\nonumber \\
&=& C \sum_{k=1}^n (k+1)^{ - C \ln(n+1)^{1/2}   } 
\leq C 2^{ - C \ln(n+1)^{1/2}   }  + C \int_2^n x^{ - C \ln(n+1)^{1/2}   }  dx
\nonumber \\
&=& C 2^{ - C \ln(n+1)^{1/2} }  + C \frac{2^{ 1 - C \ln(n+1)^{1/2} }- n^{ 1- C \ln(n+1)^{1/2} }}{ C \ln(n+1)^{1/2} -1}
\underset{n \to + \infty}{\longrightarrow} 0.
\end{eqnarray*}
We deduce that: 
\begin{equation*}
{\mathcal R}_n^{11}=\prod_{k=1}^n \Big( 1+ 2\exp \big( - C r_{k,n}^2 \big )\Big) \underset{n \to + \infty}{\longrightarrow} 1.
\end{equation*}

Like for the case $\theta \in (1/3,1)$, 
we get the control:
\begin{eqnarray*}
{\mathcal R}_n^{12}&\le & \prod_{k=1}^n \exp \big (\frac{  \rho q \lambda}{ \Gamma_n} C \gamma_k^{1/2} \exp(-\frac{r_{k,n}^2}{4}) \big) 
\nonumber \\
&\leq& \prod_{k=1}^n \exp \Big (\frac{  \rho q \lambda}{ \Gamma_n} C \gamma_k^{1/2}
 \exp  (-  \frac C{4}   [1+(\frac{\rho q \lambda}{\Gamma_n}  )^2  ] \ln(n+1)^{1/2} \ln(k+1)
 \big) 
\Big) 
\\
&\underset{\eqref{GR_COMP}}{\leq}& \prod_{k=1}^n \exp \Big (C \gamma_k^{1/2}
 \exp \big (-  \frac C{4}  [1+C (\frac{\rho q \lambda}{\Gamma_n} )^2 ] \ln(n+1)^{1/2} \ln(k+1)
\big) 
\Big) \\
&\leq &\prod_{k=1}^n \exp \big (C \frac 1{k^{1/2}} (k+1)^{-  C \ln(n+1)^{1/2} }
\big) 
 \underset{n \to + \infty}{\longrightarrow} 1,
\end{eqnarray*}
\textcolor{black}{in fact,} we have already established for the control of $ {\mathcal R}_n^{11}$ that $\sum_{k=1}^n (k+1)^{ - C \ln(n+1)^{1/2}   }   \underset{n}{\longrightarrow} 0$.
Thus, we proved that $\mathcal R_n^1=\mathcal R_n^{11}\mathcal R_n^{12} \underset{n}{\rightarrow} 1$. 

Let us now turn to the other contribution in \eqref{prod_Aleph}:
\begin{eqnarray*}
\mathcal R_n^2&=&
 \exp \big ( \frac{ \rho^2 q^2 \lambda^2 }{\Gamma_n^2} C   \sum_{k=1}^n \gamma_k^{3/2} r_{k,n}^2
\big )
\\
&\leq&
 \exp \Big  ( 2C (\frac{ \rho^2 q^2 \lambda^2 }{\Gamma_n^2} + \frac{ \rho^4 q^4 \lambda^4 }{\Gamma_n^4}   ) \ln(n+1)^{1/2} \sum_{k=1}^n \gamma_k^{3/2} \ln(k+1)  
\Big )
\\
&\leq&
 \exp \big( 2C (\frac{ \rho^2 q^2 \lambda^2 }{\Gamma_n}+\frac{ \rho^4 q^4 \lambda^4 }{\Gamma_n^3})   \frac{ \sum_{k=1}^n \gamma_k^{3/2} \ln(k+1)}{\sqrt{\Gamma_n}} 
\big )
\\
&=&\exp \big (  (\frac{ \rho^2 q^2 \lambda^2 }{\Gamma_n} +  \frac{ \rho^4 q^2 \lambda^4 }{\Gamma_n^{3}}) e_{n}^{\theta =1} \big ),
\end{eqnarray*}
with $e_{1,n}^{\theta =1}:= 2C    \frac{1}{\sqrt{\Gamma_n}}  \sum_{k=1}^n \frac{ \ln(k+1)}{k^{3/2}}  \underset{n \to + \infty}{\longrightarrow} 0$,
\textcolor{black}{and recalling that $\sqrt{\Gamma_n} \asymp \ln(n+1)^{1/2}$ for the last inequality.}

To sum up, 
 for all $\theta \in (\frac 13,1]$, \textcolor{black}{thanks to inequality} \eqref{Sn}, we know that there exist non negative sequences $(\mathscr R_n)_{n \geq 1}$, $(e_{n})_ {n\geq 1}$,  s.t. $\mathscr R_n \underset{n}{\longrightarrow} 1$, $e_n \underset{n}{\longrightarrow} 0$, and
\begin{equation*}
\E[ S_n ] \leq \mathscr R_n  \exp \Big ( \big(\frac{ \rho^2 q^2 \lambda^2 }{\Gamma_n}    +\frac{ \rho^4 q^4 \lambda_n^4 }{\Gamma_n^3}\big) e_{n}  \Big ).
\end{equation*}
\end{proof}

\section{Regularity Results and Consequences}\label{sec_Reg}
\setcounter{equation}{0}

This section is devoted to some regularity results for the Poisson problem
\begin{equation}\label{Poisson_GEN}
\mathcal A \varphi= f -\nu(f).
\end{equation}

 In particular, we state  below some Schauder like controls, which are, because of our methodology that requires pointwise controls of the derivatives, more adapted than the standard Sobolev estimates (see e.g. Pardoux and Veretennikov \cite{pard:vere:01}). 

There are two kinds of assumptions that guarantee the solution $\varphi $ of \eqref{Poisson_GEN} enjoys the required smoothness of Theorems \ref{THM_CARRE_CHAMPS}.
\begin{trivlist}
\item[(a)] If $b$, $\sigma $ and $f$ are \textit{smooth}, under suitable confluence like conditions stated  below in \A{D${}_\alpha^p$} \textcolor{black}{with the condition on $D\sigma$: $\|D\sigma\|_\infty^2\le \frac{2\alpha}{2(1+\beta) -p} $ for given $(\alpha,p) \in (0,+\infty) \times [1,2)$}, the probabilistic representation $\varphi(x)=-\int_{0}^{+\infty} \E\big[\big(f(Y_t^{0,x})-\nu(f)\big)\big]dt $ of the solution of \eqref{Poisson_GEN} can be differentiated using iterated tangent flows to establish that $\varphi\in {\mathcal C}^{3,\beta}(\R^d,\R) $, for some $\beta\in (0,1] $. It suffices for that to have $f\in C^{3,\beta}(\R^d,\R) $ and $b\in  C^{3,\beta}(\R^d,\R^d), \sigma\in C_b^{3,\beta}(\R^d,\R^d\otimes \R^r)$. We refer to Section \textcolor{black}{2.2 and 5.1} of \cite{hon:men:pag:16} for additional details. Importantly, under such assumptions,  specific non-degeneracy conditions are not needed.

\item[(b)] If we do not assume such an \textit{a priori} smoothness on $f,b,\sigma $, we need to make an \textit{extra} non-degeneracy assumption which will allow some regularity gain (elliptic bootstrap)  and a \textcolor{black}{stronger} confluence like condition \A{D${}_\alpha^p$} \textcolor{black}{with $\|D\sigma\|_\infty^2\le \frac{2\alpha}{2(3+\beta) -p} $ for given $(\alpha,p) \in (0,+\infty) \times [1,2)$}. Precisely, assuming that the bounded diffusion coefficient $\Sigma $ is also uniformly elliptic, we exploit the results of Krylov and Priola  \cite{kryl:prio:10} (Theorems 2.4-2.6) to derive that, up to an additional technical condition on $\Sigma $ when $d>1$, we actually have $\varphi\in {\mathcal C}^{3,\beta}(\R^d,\R) $ for some $\beta\in (0,1)$, as soon as $f\in C^{1,\beta}(\R^d,\R) $ and $b\in  C^{1,\beta}(\R^d,\R^d), \sigma\in C_b^{1,\beta}(\R^d,\R^d\otimes \R^d)$. 
\end{trivlist}

We point out that the second set of assumptions is very important in order to go towards the \textit{usual} setting of functional/transport inequalities which typically involves Lipschitz continuous test functions. This is for instance the case for the controls of the Wasserstein distances between the law of $Y_t$ in \eqref{eq_diff} and the invariant measure $\nu$ (see e.g. \cite{bakr:gent:ledo:14}). We are thus able, in the non-degenerate framework (b), to consider directly sources $f\in {\mathcal C}^{1,\beta}(\R^d, \R) $. Observe that, when $\beta\rightarrow 0$, we are \textit{almost} Lipschitz. Actually, we can, in this setting, handle functions $f\in {\mathcal C}^{0,1}(\R^d, \R) $
 up to a spatial regularization which leads to a constraint on the steps (see Theorem \ref{THEO_CTR_LIP} below).
 
 \subsection{Assumptions and Regularity Results} 
The regularity results  stated here can be found in Section 5 of \cite{hon:men:pag:16}. Let us now recall the useful assumptions needed.

%
\begin{trivlist}
\item[\A{UE}] \label{LABEL_UE}Uniform ellipticity. We assume that $r\ge d $ 
in \eqref{eq_diff} and that there is $\underline{\sigma}>0$ such that
\begin{equation*}
 \forall \xi \in \R^d,\ \langle \sigma\sigma ^*(x)\xi,\xi\rangle \ge \underline{\sigma} |\xi|^2.
\end{equation*}

For 
$\beta \in (0,1) $, we introduce the following condition.  
\item[\A{R${}_{1,\beta}$}] Regularity Condition. 
From equation \eqref{eq_diff}, we suppose $b \in {\mathcal C}^{1,\beta}(\R^d,\R^d),\sigma \in {\mathcal C}_b^{1,\beta}(\R^d,\R^d)$. 
\medskip
\item[\A{D${}_\alpha^p$}]Confluence Conditions. We assume that there exists $\alpha >0 $ and $ p\in [1,2)$ such that  for all $x\in \R^d $, $\xi\in \R^d $
\begin{equation}
\Big \langle \frac {Db(x)+Db(x)^*} 2\xi,\xi \Big\rangle 
+\frac 12 \sum_{j=1}^r \Big( (p-2)\frac{|\langle  D\sigma_{\cdot j}(x) \xi, \xi\rangle|^2}{|\xi|^2}+|D\sigma_{\cdot j} \xi|^2
\Big)\le -  \alpha |\xi|^2, 
\end{equation}
where $Db$ stands here  for the Jacobian of $b$,
$\sigma_{\cdot j} $ stands for the $j^{{\rm th}} $ column of the diffusion matrix $\sigma $ and $D\sigma_{\cdot j} $ for its  Jacobian matrix.
\end{trivlist}
There are others assumptions than \A{D${}_\alpha^p$} which yield, in the non-degenerate setting, 
\textcolor{black}{gradient control.}
 This is the case for the so-called Bakry and \'Emery curvature criterion (\cite{bakr:emer:85, bakr:gent:ledo:14}) which is however pretty hard to check for general multidimensional diffusion coefficients. \textcolor{black}{However for H\"older control of the gradient, this critetion seems to be not adapted, see \cite{hon:men:pag:16v2} Section 2.2.2 for more details.}
\\

We eventually introduce, as in \cite{hon:men:pag:16}, a technical condition on the diffusion coefficient $\sigma$. It allows to prove that each partial derivative $\partial_{x_i} \varphi $ of the solution of \eqref{Poisson_GEN} satisfies an autonomous scalar Poisson problem.  We suppose:
\\

\A{$\Sigma$} \ for every  $(i,j)\in \leftB 1,d\rightB^2 $ and $x=(x_1, \hdots, x_d) \in \R^d$, $\Sigma_{i,j}(x)=\Sigma_{i,j}(x_{i\wedge j},\cdots,x_d) $.\\

We say that assumption \A{P${}_\beta$} is satisfied if \A{UE},  \A{D${}_\alpha^p$} with \textcolor{black}{$\|D\sigma\|_\infty^2\le \frac{2\alpha}{2(1+\beta) -p} $}, \A{R${}_{1,\beta}$}, and \A{$\Sigma$}  are in force. From Section 5.3 of \cite{hon:men:pag:16} we have the following result.
\begin{theo}[Elliptic Bootstrap in a non-degenerate setting]\label{TH_BOOTSTRAP}
Assume \A{P${}_\beta $} holds  for some $\beta\in (0,1) $ and that $f\in {\mathcal C}^{1,\beta}(\R^d,\R) $. Then, there is a unique  $\varphi\in {\mathcal C}^{3,\beta}(\R^d,\R) $ solving \eqref{Poisson_GEN}.
\end{theo}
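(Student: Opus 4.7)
The plan is to combine three ingredients: a probabilistic representation of $\varphi$ which, thanks to the confluence condition \A{D${}_\alpha^p$}, produces a baseline $\mathcal{C}^{1,\beta}$ solution, the non-degenerate Schauder estimate of Krylov and Priola \cite{kryl:prio:10} to lift this initial regularity to $\mathcal{C}^{2,\beta}$, and an elliptic bootstrap step based on the triangular structure \A{$\Sigma$} to gain one more derivative and reach $\mathcal{C}^{3,\beta}$.

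First, I would introduce the candidate
$$\varphi(x):=-\int_0^{+\infty}\E\big[f(Y_t^{0,x})-\nu(f)\big]\,dt,$$
where $(Y_t^{0,x})_{t\ge 0}$ denotes the solution of \eqref{eq_diff} starting from $x$. Using the tangent flow $\nabla_x Y_t^{0,x}$ and \A{D${}_\alpha^p$} (with \textbf{the milder constant $\|D\sigma\|_\infty^2\le 2\alpha/(2(1+\beta)-p)$}), a standard Itô-energy computation gives exponential contraction estimates for the relevant $L^{1+\beta}$ norms of $\nabla_x Y_t^{0,x}$, so that the integral defining $\varphi$ converges together with its first spatial derivative and with the $\beta$-Hölder seminorm of the latter. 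This ensures $\varphi\in\mathcal{C}^{1,\beta}(\R^d,\R)$ solves $\mathcal A\varphi=f-\nu(f)$ distributionally.

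Second, since $f-\nu(f)\in\mathcal{C}^{0,\beta}(\R^d,\R)$, \A{UE} and \A{R${}_{1,\beta}$} hold, I would invoke Theorems 2.4--2.6 of \cite{kryl:prio:10} to upgrade the above weak solution to $\varphi\in\mathcal{C}^{2,\beta}(\R^d,\R)$, with global bounds depending on $\|f\|_{\mathcal{C}^{0,\beta}}$, $\underline{\sigma}$, $\|b\|_{\mathcal{C}^{1,\beta}}$, and $\|\sigma\|_{\mathcal{C}^{1,\beta}}$.

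Third, I perform the bootstrap. Differentiating \eqref{Poisson_GEN} formally in direction $x_i$ yields, for $u_i:=\partial_{x_i}\varphi$, a new second-order linear PDE of the form
$$\mathcal A u_i+\langle \partial_{x_i}b,\nabla\varphi\rangle+\tfrac12\Tr\bigl((\partial_{x_i}\Sigma)D^2\varphi\bigr)=\partial_{x_i}f.$$
The role of \A{$\Sigma$} is decisive here: the triangular relation $\Sigma_{jk}(x)=\Sigma_{jk}(x_{j\wedge k},\dots,x_d)$ forces $\partial_{x_i}\Sigma_{jk}=0$ whenever $i<j\wedge k$, so, proceeding inductively from $i=d$ down to $i=1$, the coupling with the other partial derivatives $u_\ell$ disappears and $u_i$ satisfies an autonomous scalar Poisson equation $\widetilde{\mathcal A}_i u_i=\widetilde f_i$ with $\widetilde{\mathcal A}_i$ uniformly elliptic, with coefficients in $\mathcal{C}^{0,\beta}$ (inherited from \A{R${}_{1,\beta}$}), and a source $\widetilde f_i\in\mathcal{C}^{0,\beta}$ (built from $\partial_{x_i}f$ and from the already known $\mathcal{C}^{2,\beta}$ quantities $\nabla\varphi,D^2\varphi$). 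A second application of Krylov--Priola to this scalar equation gives $u_i\in\mathcal{C}^{2,\beta}$, hence $\varphi\in\mathcal{C}^{3,\beta}(\R^d,\R)$. Uniqueness is standard: if $\mathcal A\psi=0$ with $\psi$ of moderate growth, then $\psi=\nu(\psi)$ by invariance of the unique invariant measure $\nu$ (\A{U}).

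The main obstacle will be the third step: one must check that, for each $i$, the differentiated equation can actually be recast as an autonomous, uniformly elliptic, $\mathcal{C}^{0,\beta}$-coefficient Poisson problem to which the global Schauder bound applies, and that the iterated tangent flows associated with this effective operator remain integrable. It is precisely at this level that the stricter confluence constraint $\|D\sigma\|_\infty^2\le 2\alpha/(2(3+\beta)-p)$ mentioned in (b) is needed: together with \A{$\Sigma$}, it guarantees exponential contraction for the higher-order flows required to control $D^3\varphi$ and its $\beta$-Hölder modulus simultaneously.
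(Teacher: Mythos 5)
Since the paper offers no proof of Theorem~\ref{TH_BOOTSTRAP} (it simply cites Section~5.3 of \cite{hon:men:pag:16}), your outline cannot be compared to an internal argument; but the ingredients you assemble --- probabilistic representation plus confluence for a baseline gradient control, Krylov--Priola global Schauder, and a bootstrap keyed to \A{$\Sigma$} --- do match what the preamble of Section~\ref{sec_Reg} of the paper signals. There are, however, two concrete flaws.

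The inductive direction in your third step is reversed, and this is not cosmetic. Under \A{$\Sigma$}, $\Sigma_{jk}$ depends only on $(x_{j\wedge k},\dots,x_d)$, so $\partial_{x_i}\Sigma_{jk}\neq 0$ forces $i\ge j\wedge k$. Using Schwarz symmetry, $\partial^2_{jk}\varphi=\partial_{x_{j\vee k}}u_{j\wedge k}$, so the extra first-order contribution in the equation for $u_i$ is
$\tfrac12\sum_{j\wedge k\le i}\partial_{x_i}\Sigma_{jk}\,\partial_{x_{j\vee k}}u_{j\wedge k}$,
which involves $\nabla u_m$ only for $m\le i$: the $m=i$ piece is a first-order perturbation of the operator acting on $u_i$ (harmless by \A{UE} and the smallness of $\|D\sigma\|_\infty$ encoded in \A{D${}_\alpha^p$}), while the pieces with $m<i$ are admissible sources provided $u_1,\dots,u_{i-1}$ are already known in $\mathcal C^{1,\beta}$. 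Consequently the only workable order is $i=1,2,\dots,d$. Starting at $i=d$, as you propose, the very first equation (for $u_d$) is coupled to every $\nabla u_m$, $m<d$, none of which are yet controlled, so the bootstrap never gets off the ground.

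Your final paragraph also contradicts your first step. You run the tangent-flow argument in step one under the milder constraint $\|D\sigma\|_\infty^2\le 2\alpha/(2(1+\beta)-p)$ (which is precisely what the definition of \A{P${}_\beta$} asks for, and hence what the theorem's hypotheses provide), yet you conclude by saying the bootstrap needs the stronger $\|D\sigma\|_\infty^2\le 2\alpha/(2(3+\beta)-p)$ via ``higher-order flows.'' You cannot have it both ways: if steps two and three are genuinely a priori Schauder estimates \`a la Krylov--Priola applied to the decoupled scalar equations, no iterated tangent flows are needed beyond the first step and the milder constant suffices; if, on the contrary, each $u_i$ must be re-represented probabilistically and its own flow controlled, then a strengthened confluence constraint enters, but then your step three as written (a ``second application of Krylov--Priola'') does not capture the mechanism. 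The paper itself is ambiguous on this point (its item~(b) in Section~\ref{sec_Reg} states the stronger constant while the definition of \A{P${}_\beta$} used in the theorem gives the weaker one), so the confusion is understandable, but you must commit to one mechanism and keep the confluence hypothesis coherent with it.
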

Note as well from Remark \ref{Krylov_Priola}, that there exists $C>0,\ |D^2\varphi(x)|\le C(1+|x|)^{-1} $. In other words, the solution $\varphi $ of \eqref{Poisson_GEN} satisfies \A{T${}_\beta $}.
\begin{remark}[On Schauder estimates for $\beta=1 $]
We insist on the fact that $\beta\in (0,1) $ in the above theorem. Indeed, it is well known that the H\"older exponent $\beta $ cannot go to $1$ in the Schauder estimates.  Note that, for the particular case  
$f\in  {\mathcal C}^{1,1}(\R^d,\R)$, we also have $f\in  {\mathcal C}^{1,\beta}(\R^d,\R) $ for all $\beta\in (0,1) $. This means that, for such $f$, the elliptic bootstrap works up to an arbitrarily small correction. 
\end{remark}

From Theorem \ref{TH_BOOTSTRAP} we readily have:
\begin{corol}[Smoothness for the Poisson problem with \textit{Carr\'e du champ} source]
\label{COROL_BOOTSTRAP_CC}
Assume \A{P${}_\beta $} holds  for some $\beta\in (0,1)$ and that $f\in {\mathcal C}^{1,\beta}(\R^d,\R) $. Then, there is a unique  $ \vartheta\in {\mathcal C}^{3,\beta}(\R^d,\R) $ solving 
$${\mathcal A} \vartheta=|\sigma^*\nabla \varphi|^2- \nu(|\sigma^*\nabla \varphi|^2)$$
and satisfying \A{T${}_\beta $}.
\end{corol}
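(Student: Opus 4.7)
The plan is to reduce the statement to a direct application of Theorem \ref{TH_BOOTSTRAP} with a suitably regular source. Write
\[
g(x):=|\sigma^*\nabla \varphi(x)|^2-\nu(|\sigma^*\nabla \varphi|^2).
\]
The whole task amounts to checking that $g\in {\mathcal C}^{1,\beta}(\R^d,\R)$, since $\nu(g)=0 $ by construction.

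First, since \A{P${}_\beta$} holds and $f\in {\mathcal C}^{1,\beta}(\R^d,\R)$, Theorem \ref{TH_BOOTSTRAP} already delivers the required Schauder gain on $\varphi $: one has $\varphi\in {\mathcal C}^{3,\beta}(\R^d,\R) $, so in particular $\nabla \varphi $ is bounded together with its first and second derivatives, and its second derivatives are $\beta $-H\"older continuous. In particular $\nabla \varphi\in {\mathcal C}^{1,\beta}(\R^d,\R^d)$. Combined with \A{R${}_{1,\beta}$} which gives $\sigma\in {\mathcal C}^{1,\beta}_b(\R^d,\R^d\otimes \R^d) $, the usual product rule for bounded H\"older functions yields $\sigma^* \nabla \varphi\in {\mathcal C}^{1,\beta}(\R^d,\R^d) $ and bounded. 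Taking the squared Euclidean norm preserves $ {\mathcal C}^{1,\beta}$-regularity (one composes with the smooth mapping $y\mapsto |y|^2 $ on the bounded range of $\sigma^* \nabla \varphi $), hence $|\sigma^* \nabla \varphi|^2\in {\mathcal C}^{1,\beta}(\R^d,\R)$, and so does $g $. The constant $\nu(|\sigma^*\nabla \varphi|^2) $ is well-defined since the integrand is bounded.

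Second, applying Theorem \ref{TH_BOOTSTRAP} to the Poisson equation ${\mathcal A}\vartheta=g $ yields a unique $ \vartheta \in {\mathcal C}^{3,\beta}(\R^d,\R) $ solving $ {\mathcal A} \vartheta=|\sigma^* \nabla \varphi|^2-\nu(|\sigma^*\nabla \varphi|^2)$.

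Finally, I must verify that $\vartheta $ satisfies \A{T${}_\beta$}. Smoothness (condition i)) is given. For condition ii), I rewrite, as in \eqref{eq_b_nabla_varphi} of Remark \ref{Krylov_Priola},
\[
\langle \nabla \vartheta(x), b(x)\rangle=g(x)-\frac 12 \Tr\!\big(\Sigma(x) D^2 \vartheta(x)\big).
\]
Since $g $ is Lipschitz continuous (being in ${\mathcal C}^{1,\beta} $ with bounded derivative), $\Sigma $ is bounded and Lipschitz by \A{R${}_{1,\beta}$}, and $D^2\vartheta $ is bounded and Lipschitz because $\vartheta\in {\mathcal C}^{3,\beta}(\R^d,\R) $, the right-hand side is Lipschitz. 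Condition iii) on the growth at infinity of $\vartheta $ follows exactly as for $\varphi $ in Theorem \ref{TH_BOOTSTRAP}, via the probabilistic representation combined with \A{D${}_\alpha^p$} and the Lyapunov condition \A{${\mathcal L}_{\mathbf V}$} (cf. the last comment after Theorem \ref{TH_BOOTSTRAP} referring to Remark \ref{Krylov_Priola}). The main subtlety is purely technical and lies in step one, making sure the H\"older regularity of the squared carr\'e du champ source is preserved; once that is granted, the statement is a direct corollary of Theorem \ref{TH_BOOTSTRAP}.
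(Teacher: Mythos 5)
Your proof is correct and follows the same route as the paper's (one-line) argument: first use Theorem \ref{TH_BOOTSTRAP} together with \A{R${}_{1,\beta}$} to conclude $\tilde f:=|\sigma^*\nabla\varphi|^2\in {\mathcal C}^{1,\beta}(\R^d,\R)$, then apply Theorem \ref{TH_BOOTSTRAP} a second time to the source $\tilde f$. You merely spell out the Hölder-product step and the verification of \A{T${}_\beta$}, which the paper leaves implicit (the latter being the observation immediately after Theorem \ref{TH_BOOTSTRAP} and Remark \ref{Krylov_Priola}).
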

Indeed, it suffices to observe from Theorem \ref{TH_BOOTSTRAP} and the assumption \A{R${}_{1,\beta}$} in \A{P${}_\beta $} that $\tilde f:=|\sigma^*\nabla \varphi|^2 \in {\mathcal C}^{1,\beta}(\R^d,\R)$ and to apply again Theorem \ref{TH_BOOTSTRAP} for this source.\\


\subsection{Concentration bounds for a Lipschitz source in a non-degenerate setting}


As indicated in the introduction of the Section, we aim at controlling deviations for Lipschitz sources. In the current Lipschitz framework we aim to address, we need a slightly different set of assumptions. Namely, we will assume that \A{C1}, \A{GC}, \A{C2}, \A{${\mathbf {\mathcal L_V}}$}, \A{U}, \A{S} and \A{P${}_\beta$} 
are in force and we will say that \A{L${}_\beta $} holds. Under this new assumption, we have the following result.
\begin{theo}[Non-asymptotic concentration bounds for Lipschitz continuous source] \label{THEO_CTR_LIP}
Assume that \A{L${}_\beta$} 
is in force. Let $f$ be a Lipschitz continuous function. 
For a time step sequence $(\gamma_k)_{k\ge 1} $ of the form $\gamma_k\asymp k^{-\theta} $, $\theta\in (1/2,1] $, we have that, there exist two explicit monotonic sequences  
$c_n\le 1\le C_n,\ n\ge 1$,   with $\lim_n C_n = \lim_n c_n =1$
such that for all $n \geq 1$ and for every $a >0$:  
 \begin{eqnarray}
\P\big[ |\sqrt{\Gamma_n}\big(\nu_n( f)-\nu(f)\big) | \geq a \big] \leq 2 C_n \exp \big( - c_n \frac{a^2}{2\nu(|\sigma^*\nabla \varphi|^2)} 
\big), \label{BD_LIP}
\end{eqnarray}
where $\varphi \in \mathcal C^{0,1}(\R^d,\R)\cap W_{2,loc}^{2}(\R^d,\R)$ is a weak solution of the Poisson equation $\mathcal A \varphi= f -\nu (f )$.

\end{theo}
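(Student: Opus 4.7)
The strategy is a mollification argument: we approximate the Lipschitz source $f$ by a smooth one for which Theorem \ref{THM_CARRE_CHAMPS} applies directly, then pass the regularization parameter to zero at a rate made compatible with $\theta>1/2$. Fix a smooth compactly supported mollifier $\rho$ with $\int\rho=1$, set $f_n:=f*\rho_{\varepsilon_n}$ with $\rho_{\varepsilon_n}(x)=\varepsilon_n^{-d}\rho(x/\varepsilon_n)$ for a scale $\varepsilon_n\downarrow 0$ to be calibrated. Standard mollifier estimates give $\|f-f_n\|_\infty\le C[f]_1\varepsilon_n$, $\|Df_n\|_\infty\le C[f]_1$, $[Df_n]_\beta\le C[f]_1\varepsilon_n^{-\beta}$, and $\|D^k f_n\|_\infty\le C_k[f]_1\varepsilon_n^{-(k-1)}$, so in particular $f_n\in {\mathcal C}^{1,\beta}(\R^d,\R)$ for every $\beta\in(0,1)$. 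Under \A{L${}_\beta$}, Theorem \ref{TH_BOOTSTRAP} then furnishes a unique $\varphi_n\in {\mathcal C}^{3,\beta}(\R^d,\R)$ solving $\mathcal A\varphi_n = f_n-\nu(f_n)$, and Corollary \ref{COROL_BOOTSTRAP_CC} an auxiliary $\vartheta_n\in {\mathcal C}^{3,\beta}$ solving the carré du champ Poisson equation associated with $\varphi_n$.

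The decomposition $\nu_n(f)-\nu(f) = [\nu_n(f_n)-\nu(f_n)] + (\nu_n-\nu)(f-f_n)$, combined with the deterministic bound $|(\nu_n-\nu)(f-f_n)|\le 2\|f-f_n\|_\infty\le 2C[f]_1\varepsilon_n$, yields
\begin{equation*}
\P\big[\sqrt{\Gamma_n}|\nu_n(f)-\nu(f)|\ge a\big] \le \P\big[\sqrt{\Gamma_n}|\nu_n(f_n)-\nu(f_n)|\ge a - 2C[f]_1\sqrt{\Gamma_n}\,\varepsilon_n\big].
\end{equation*}
Theorem \ref{THM_CARRE_CHAMPS} applied to $\varphi_n$ bounds the right-hand side by a Gaussian expression with variance $\nu(|\sigma^*\nabla \varphi_n|^2)$ and with constants $\tilde c_n,\tilde C_n$ and internal remainders $e_n$ depending on $\varepsilon_n$ through polynomial expressions in the norms $\|\varphi_n\|_{{\mathcal C}^{3,\beta}}$ and $\|\vartheta_n\|_{{\mathcal C}^{3,\beta}}$. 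By the Schauder bounds of Theorem \ref{TH_BOOTSTRAP} and the elliptic bootstrap applied successively to $f_n$ and then to $|\sigma^*\nabla \varphi_n|^2$, these norms are controlled by $C\varepsilon_n^{-\beta}$.

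It remains to calibrate $\varepsilon_n$ and to pass to the limit. Tracking the proof of Lemma \ref{HMP_lemmas}, the dominant $\varepsilon_n$-dependent remainder originates from the Taylor bias $\bar G_n$ and behaves like $\varepsilon_n^{-\beta}\,\Gamma_n^{((3+\beta)/2)}/\sqrt{\Gamma_n}$; the remaining remainders of Lemmas \ref{Mn}--\ref{rn_choice} pick up analogous powers of $\varepsilon_n^{-\beta}$ multiplied by strictly smaller deterministic decays. Choosing $\varepsilon_n=\Gamma_n^{-\alpha}$, one needs simultaneously $\alpha>1/2$ (so that the mollification error $\sqrt{\Gamma_n}\varepsilon_n$ vanishes) and $\alpha\beta<(2+\beta)\theta/2-1/2$ (so that the amplified bias remainder vanishes). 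Letting $\beta\downarrow 0$, the second condition collapses to $\theta>1/2$, which is exactly the announced admissible range. Under this constraint one can pick $\beta$ small enough and $\alpha$ slightly above $1/2$, conclude that all remainders entering $\tilde c_n,\tilde C_n$ still tend to $0$, and invoke weak stability of the Poisson problem together with the uniform Lipschitz bound on $\varphi_n$ to obtain $\nu(|\sigma^*\nabla \varphi_n|^2)\to \nu(|\sigma^*\nabla \varphi|^2)$ by dominated convergence. The main obstacle is precisely this last bookkeeping step: verifying that every remainder in Lemmas \ref{HMP_lemmas}--\ref{rn_choice} inherits at worst an $\varepsilon_n^{-\beta}$ amplification and that the joint constraints on $(\alpha,\beta,\theta)$ degenerate exactly to $\theta>1/2$, which explains the loss compared with the $\theta>1/3$ range available for ${\mathcal C}^{1,1}$ sources.
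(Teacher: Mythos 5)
Your proposal follows exactly the paper's route: mollify the Lipschitz source, apply Theorem~\ref{THM_CARRE_CHAMPS} to the smoothed source, track the $\varepsilon^{-\beta}$ amplification of the Schauder norms while exploiting that $\|\nabla\varphi_n\|_\infty$ stays bounded by $[f]_1/\alpha$, calibrate $\varepsilon_n$ against $\Gamma_n$, and observe that the joint constraints degenerate to $\theta>1/2$ as $\beta\downarrow 0$. The paper defers the detailed calibration to Section~5.4 of~\cite{hon:men:pag:16}, so your more explicit bookkeeping is essentially a spelled-out version of the same step; a minor imprecision is that the amplified bias scales like $\Gamma_n^{\alpha\beta}\,\Gamma_n^{((3+\beta)/2)}/\sqrt{\Gamma_n}\asymp n^{\alpha\beta(1-\theta)+1/2-(2+\beta)\theta/2}$, so the correct constraint carries an extra $(1-\theta)$ factor, but this does not affect the limit $\theta>1/2$.

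The one genuine gap is the final step, $\nu(|\sigma^*\nabla\varphi_n|^2)\to\nu(|\sigma^*\nabla\varphi|^2)$. You invoke ``weak stability of the Poisson problem'' and ``dominated convergence,'' but dominated convergence only closes the argument once one has $\nabla\varphi_n\to\nabla\varphi$ $\nu$-a.e.\ (or in $L^2(\nu)$), and that is precisely what needs proving: the Schauder bound $\|\varphi_n\|_{\mathcal C^{3,\beta}}\lesssim\varepsilon_n^{-\beta}$ blows up, so equicontinuity of the gradients is not available, and the map $f\mapsto\nabla\varphi$ is not obviously continuous in the relevant topology. In the paper this is handled by going through the probabilistic representation $\varphi_\delta(x)=-\int_0^\infty\E[f_\delta(Y_t^{0,x})-\nu(f_\delta)]\,dt$, a total-variation estimate of Pardoux--Veretennikov type $\|\nu_{Y_t^{0,x}}-\nu\|_{T.V.}\le C e^{c|x|}e^{-\alpha t}$ to obtain pointwise convergence $\varphi_\delta(x)\to\varphi(x)$, and then an $L^2$ estimate on compacts exploiting $\varphi\in W^2_{2,\mathrm{loc}}$ together with the uniform gradient bound to split $\nu(|\nabla\varphi_\delta-\nabla\varphi|^2)$ into a small tail contribution and a compact contribution of order $\delta^2\|\varphi\|^2_{W^2_2(\bar K)}$. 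Without some argument of this kind (quantitative convergence of the gradients, not just of the functions), the asserted convergence of the variance in the bound is unjustified.
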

\begin{proof}[Sketch of the proof]
To prove the above result, the starting point consists in regularizing the source $f$ by mollification. Namely, we consider $f_\delta=f\star \eta_\delta $, where $\star$ denotes the usual convolution, for a suitable mollifier $\eta_\delta(\cdot):=\frac{1}{\delta^{d}}\eta(\frac{\cdot}{\delta}),\ \delta>0 $, where $\eta $ is a compactly supported non-negative function s.t. $\int_{\R^d}\eta(x)dx=1 $. We then write $\nu_n(f)-\nu(f)=\nu_n(f_\delta)-\nu(f_\delta)+(\nu_n-\nu)(f-f_\delta)=:\nu_n\big(f_\delta-\nu(f_\delta)\big)+{R_{n,\delta}} $. We aim at letting $\delta $ go to 0 so that ${R_{n,\delta}}  $ can be viewed as a remainder. On the other hand, we will apply the same strategy as in the proof of Theorem \ref{THM_CARRE_CHAMPS} to analyze the deviations of $\nu_n\big(f_\delta-\nu(f_\delta)\big)=\nu_n({\mathcal A}\varphi_\delta) $. Precisely, reproducing the arguments of Section 5.4 of \cite{hon:men:pag:16} to equilibrate the explosions of the derivatives of $\varphi_\delta $ in the proof of Theorem \ref{THM_CARRE_CHAMPS} yields that there exists two explicit monotonic sequences  
$\tilde c_n\le 1\le C_n,\ n\ge 1$,   with $\lim_n C_n = \lim_n \tilde c_n =1$ s.t.
\begin{equation}\label{PREAL_LIP}
\P\big[ |\sqrt{\Gamma_n}\big(\nu_n( f)-\nu(f)\big) | \geq a \big] \leq 2 C_n \exp \big( - \tilde c_n \frac{a^2}{2\nu(|\sigma^*\nabla \varphi_\delta|^2)}).
\end{equation}
From the previous Schauder estimates, we know that $\varphi_\delta\in {\mathcal C}^{3,\beta}(\R^d,\R) $ for all $\delta>0 $ with explosive ${\mathcal C}^{3,\beta} $ norm in $\delta $ but with bounded gradient. Recall indeed that, for all $\beta\in (0,1)$,
\begin{equation}
\label{EXPLO_SCHAUDER_CONTROLS_AND_GD_BOUNDED}
\|f_\delta\|_{{\mathcal C}^{1,\beta}}\le C\delta^ {-\beta},\ |\nabla \varphi_\delta|\le \frac{[f_\delta]_1}{\alpha}=\frac{[f]_1}{\alpha}.  
\end{equation}
 We again refer to Lemma 6 and Section 5.4 in \cite{hon:men:pag:16} for details. On the other hand, it is well known, see e.g. \cite{pard:vere:01}, that $\varphi_\delta(x)=-\int_0^{+\infty}\E[f_\delta(Y_t^{0,x})-\nu(f_\delta)] dt $. From their Proposition 1, we have in our case that, denoting by $\nu_{Y_t^{0,x}} $ the law of $Y_t^{0,x} $ we have the following control for the total variation between $\nu_{Y_t^{0,x}} $ and $\nu $. There exists constants $(C,c):=(C,c)($\A{L${}_\beta$}$)$ s.t.
\begin{equation}\label{TV_CONTROL}
\|\nu_{Y_t^{0,x}}-\nu \|_{T.V.}\le C\exp(c|x|)\exp(-\alpha t).
\end{equation}
Introducing now $\bar f_\delta=f_\delta-f-\nu\big(f_\delta-f \big) $, we rewrite that, for all $x\in \R^d $:
\begin{eqnarray*}
\big(\varphi_\delta-\varphi\big)(x)&=&-\int_{0}^{+\infty} \E[\bar f_\delta(Y_t^{0,x})] dt,\\
|\big(\varphi_\delta-\varphi\big)(x)|&\le& \int_0^{+\infty} \Big(\int_{\R^d} |\bar f_\delta(y)|^2 \big( \nu_{Y_t^{0,x}}+\nu\big) (dy)\Big)^{1/2} \|\nu_{Y_t^{0,x}}-\nu\|_{T.V.}^{1/2} dt\\
&\underset{\eqref{TV_CONTROL}}{\le} & C^{1/2}\exp(\frac c 2|x|)\|\bar f_\delta \|_\infty \int_0^{+\infty} \exp\Big(-\frac \alpha 2 t \Big) dt.
\end{eqnarray*}
Recalling that $f$ is Lipschitz and that $(f-f_\delta)(x)=\int_{\R^d} \big(f(x-y)-f(x)\big)\eta_\delta(y) dy $ , we actually have $\|\bar f_\delta \|_\infty \le C[f]_1\delta $, which establishes the  pointwise convergence
 $\varphi_\delta(x)\underset{\delta\rightarrow 0}{\longrightarrow }-\int_0^{+\infty}\E[f(Y_t^{0,x})-\nu(f)] dt=:\varphi(x)  $ which is the only weak solution of \eqref{Poisson_GEN} in $W_{p,loc}^2(\R^d,\R),p>1 $ (see Theorem 1 in  \cite{pard:vere:01}). 

\textcolor{black}{
Let us now prove that 
\begin{equation}
\label{CV_CARRE_CHAMP}
\lim_{\delta\to 0} \nu(|\sigma^* \nabla \varphi_\delta|^2) =\nu(|\sigma^* \nabla \varphi|^2).
\end{equation}
For all $\varepsilon>0$, there is a compact $K:=K(\varepsilon)$ such that, denoting by $K^c:=\R^d\backslash K $, we have: 
\begin{equation*}
\int_{\R^d}  \big | \nabla \varphi_\delta- \nabla \varphi \big |^2(x) \mathds 1_{K^c}(x) \nu(dx)
\leq
 4 \|\nabla \varphi \|_\infty^2
\int_{\R^d}   \mathds 1_{K^c}(x) \nu(dx)
\leq  \frac \varepsilon 2.
\end{equation*}
Also, since $\varphi\in W_{2,loc}^2(\R^d,\R)$, we write:
\begin{eqnarray*}
&&\int_{\R^d}  \big | \nabla \varphi_\delta- \nabla \varphi \big |^2(x) \mathds 1_{K}(x) \nu(dx)
\leq
\int_{\R^d}  \Big | \int_{\R^d} \big(  \nabla \varphi(x-z)- \nabla \varphi (x) \big )\rho_\delta(z) dz \Big |^2 \mathds 1_{K}(x) \nu(dx)
\nonumber \\
&& =
\int_{\R^d}  \Big | \int_{\R^d}  \big(\int_0^1 D^2 \varphi (x-\lambda z)  z  d\lambda\big) \rho_\delta(z) dz \Big |^2 \mathds 1_{K}(x) \nu(dx)
\nonumber \\
&& \leq \int_0^1 d\lambda \int_{\R^d} \Big | \int_{\R^d}   D^2 \varphi (x-\lambda z) z \rho_\delta(z) dz\Big|^2\mathds 1_{K}(x) \nu(dx)\\
&& \leq 
\int_0^1 d\lambda \Big (\int_{\R^d}   dz |z|^2  \rho_\delta(z) \int_{\R^d}   |D^2 \varphi (x-\lambda z)|^2      \mathds 1_{K}(x) \nu(dx) \Big ) 
\leq C \delta^2 \|\varphi \|_{W_{2}^2(\bar K,\R)}^2< \frac \varepsilon 2,
\end{eqnarray*}
for $\delta$ small enough,
 using the Cauchy-Schwarz inequality for the penultimate control and denoting by $\bar K $ a compact set such that for all $z\in B(0,C\delta) \supset {\rm supp}({\eta_\delta})$, $x\in K$, $ x-z\in \bar K$. This in particular gives \eqref{CV_CARRE_CHAMP}.
}
Hence, setting $$c_n:= \tilde c_n \frac{\nu(|\sigma^*\nabla \varphi|^2)}{ \nu(|\sigma^*\nabla \varphi_\delta|^2)} \to_n 1,$$
and recalling from Section 5.4. in \cite{hon:men:pag:16} that $\delta:=\delta(n)\to_n 0 $\footnote{which was anyhow constrained to go to 0  sufficiently \textit{slowly}  in order to balance the explosions in the derivatives coming from the Schauder estimates, see \eqref{EXPLO_SCHAUDER_CONTROLS_AND_GD_BOUNDED}. It is specifically this feature that led to the condition $\gamma_n \asymp n^{-\theta}, \theta\in (1/2,1]$.},
we derive that \eqref{BD_LIP} follows from \eqref{PREAL_LIP} up to a modification of $\tilde c_n $. 

Furthermore, let us point out that, the result can alternatively be stated replacing the \textit{carr\'e du champ} in \eqref{BD_LIP} by the variance of the Lipschitz source under the invariant law. In fact, we can write by the dominated convergence theorem:
\begin{eqnarray}\label{dominate_delta}
&&\lim_{\delta \to 0} \nu(|\sigma^*\nabla \varphi_\delta|^2)
=\nu(|\sigma^*\nabla \varphi|^2).
\end{eqnarray}
\end{proof}

\begin{remark}
The new threshold $\theta> \frac 12$ comes from the specific Lipschitz regularity of the test function $f$. Intuitively, this threshold naturally appears when we consider $\beta\rightarrow 0 $ in the previous condition $\theta\in (\frac{1}{2+\beta},1] $ induced by the regularity of $\varphi \in {\mathcal C}^{3,\beta}(\R^d,\R) $ which holds, under \A{P${}_\beta $}, when $f\in {\mathcal C}^{1,\beta}(\R^d,\R) $. We underline anyhow that, for $\beta=0$, the Schauder estimates do not directly apply.
\end{remark}

\section{Optimisation over $\rho$ under \textit{Gaussian} and \textit{super Gaussian deviations}}\label{sec_optim}

\textcolor{black}{
In Lemma \ref{RHO}, we performed an asymptotic estimation of the upper-bound for \textit{Gaussian deviations}.
However, from a numerical point of view, it appears to be more significant to optimize over $\rho$ in whole generality, i.e. not only when $\frac a {\sqrt{\Gamma_n}} \to 0$. 
This procedure conducts to deviations bounds that are much closer to the realizations.
}

\textcolor{black}{
In particular, in \textit{super Gaussian deviations} framework (i.e. $\frac a{\sqrt{\Gamma_n} } \to 0$), we provide here a ``weaker" concentration inequality than the Gaussian one, which precisely comes from the optimization over $\rho$ for this regime.
 This loss of concentration, with the terminology of Remark \ref{LA_RQ_SUR_LES_REGIMES}, is intrinsic to our method, as it will be shown in the proofs of Theorem \ref{THM_CARRE_CHAMPS_super_Gaussian} and Lemma \ref{RHO_optim} below, see also Remark \ref{remark_bootstrap}}.

\begin{theo}[Deviations in the \textit{super Gaussian} regime]\label{THM_CARRE_CHAMPS_super_Gaussian}
Assume \A{A} is in force. If there exists $\vartheta \in \mathcal C^{3,\beta}(\R^d,\R)$, $\beta\in (0,1] $ satisfying \A{T${}_\beta$} s.t. 
\begin{equation}\label{Poisson}
\mathcal A \vartheta= |\sigma^*  \nabla \varphi |^2 -\nu(|\sigma^*  \nabla \varphi |^2),
\end{equation}
then, for $ \theta \in (\frac{1}{2+\beta}, 1 ]$
, there exist explicit non-negative sequences 
$( c_n)_{n\ge 1}$ and $( C_n)_{n\ge 1}$, respectively increasing and decreasing for $n$ large enough, with $\lim_n  C_n = \lim_n c_n =1$ 
s.t. for all $n\ge 1$, $a>0$, the following bounds hold.
When $\frac{a }{\sqrt{\Gamma_n}} \rightarrow + \infty$ (\textit{Super Gaussian deviations}):
\begin{equation*}
\P\big[ |\sqrt{\Gamma_n}\nu_n( \mathcal{A} \varphi )| \geq a \big] 
\leq 2\,  C_n \exp \big(\! -  c_n\frac{a^{4/3} \Gamma_n^{1/3}}{2 \| \sigma \|_{\infty}^{2/3} [\vartheta]_1^{2/3} }
\big). 
\end{equation*}
\end{theo}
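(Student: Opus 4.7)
The plan is to reuse the machinery developed for Theorem \ref{THM_CARRE_CHAMPS} up to and including the universal estimate \eqref{ineq_PMIN}: the decomposition of Lemma \ref{decomp_nu}, the exponential Markov inequality with H\"older splitting into $\mathscr T_1, \mathscr T_2, \mathscr T_3$ as in \eqref{Mn_ineq_T1T2} exploiting the auxiliary Poisson equation \eqref{Poisson}, the remainder controls of Lemma \ref{HMP_lemmas}, the Gaussian martingale control of Lemma \ref{Mn}, and the truncation Lemmas \ref{CTR_T1_N}--\ref{rn_choice} for unbounded innovations. All these ingredients are stated for generic $\rho > 1$ and hence deliver
\begin{equation*}
\P\big[\sqrt{\Gamma_n}\nu_n(\mathcal A \varphi) \geq a\big] \leq C_n \exp\big(P_{\min}(a, \Gamma_n, \rho)\big), \quad \rho > 1,
\end{equation*}
with $P_{\min}$ given by \eqref{P_min_def}. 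The novelty lies entirely in the optimization of $\rho$ adapted to the super Gaussian regime, which replaces Lemma \ref{RHO}.

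Keeping the parametrization $\rho - 1 = \xi \Psi$ with $\Psi := \frac{27\tilde B_n a^2}{8 \tilde A_n^3 \Gamma_n}$ from the proof of Lemma \ref{RHO}, the super Gaussian assumption $a/\sqrt{\Gamma_n} \to +\infty$ now forces $\Psi \to +\infty$. The natural choice becomes $\xi \to 0$ with $\xi \Psi = \rho - 1$ bounded, so that $\rho$ remains bounded away from both $1$ and $+\infty$. From \eqref{PREAL_OPTIM_POLY} one rewrites $P_{\min}(a, \Gamma_n, \rho) = -\frac{9 a^2}{16 \tilde A_n} \cdot g(\xi)/\rho$. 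Using $g(\xi) \sim 2^{1/3} \xi^{1/3}$ as $\xi \to 0$ together with $\xi^{1/3} = \frac{2 (\rho-1)^{1/3} \tilde A_n \Gamma_n^{1/3}}{3 \tilde B_n^{1/3} a^{2/3}}$ leads to
\begin{equation*}
P_{\min}(a, \Gamma_n, \rho) \sim -\frac{3 \cdot 2^{1/3}}{8} \cdot \frac{(\rho-1)^{1/3}}{\rho} \cdot \frac{a^{4/3} \Gamma_n^{1/3}}{\tilde B_n^{1/3}}.
\end{equation*}
Since $\rho \mapsto (\rho-1)^{1/3}/\rho$ is maximal on $(1, +\infty)$ at $\rho = 3/2$ with value $\frac{2}{3}\cdot 2^{-1/3}$, letting $\rho \to 3/2$ and inserting the asymptotics $\tilde B_n \to \frac{\|\sigma\|_\infty^2 [\vartheta]_1^2}{8}$ from \eqref{def_ABbar} then yields $P_{\min} \sim - c_n \frac{a^{4/3}\Gamma_n^{1/3}}{2 \|\sigma\|_\infty^{2/3} [\vartheta]_1^{2/3}}$ with $c_n \to 1$, which is exactly the announced exponent.

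The hard part will be to check that the truncation estimates of Lemmas \ref{CTR_T1_N}--\ref{rn_choice}, designed to be uniform in $\rho$, still deliver $\mathscr R_n \to 1$ and negligible correction terms when $\rho$ no longer tends to $1$. Plugging $\rho \to 3/2$ and $\xi \to 0$ into \eqref{lambda} gives $\lambda_n \asymp a^{1/3}\Gamma_n^{5/6}$, whence $\frac{\rho q \lambda_n}{\Gamma_n} \asymp a^{1/3}\Gamma_n^{-1/6} \to +\infty$; this is precisely the divergence that the prefactor $(1 + \rho q \lambda/\Gamma_n)$ in the definition \eqref{rn} of $r_{k,n}$ was designed to absorb. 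Substituting the super Gaussian scale into the proof of Lemma \ref{rn_choice} shows that the extra quartic term $\frac{\rho^4 q^4 \lambda^4}{\Gamma_n^3}\, e_n$ in \eqref{ineq_Sn} becomes of order $a^{4/3}\Gamma_n^{1/3}\cdot e_n$, which stays $o(|P_{\min}|)$ provided the $\theta$-dependent sequence $e_n$ decays fast enough against the choice in \eqref{rn}. This compatibility check, an analog of Remark \ref{REM_RESTE_LAMBDA_GAMMA_N_asymp} for super Gaussian deviations, is the delicate step that replaces the $\rho \to 1$ analysis and preserves $c_n \to 1$ in the final bound.
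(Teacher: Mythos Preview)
Your proposal is correct and follows essentially the same route as the paper: reuse the entire machinery leading to \eqref{ineq_PMIN} unchanged, then redo the optimization over $\rho$ in the regime $\Psi\to+\infty$, landing on $\rho-1=\tfrac12$ and the exponent $-\frac{a^{4/3}\Gamma_n^{1/3}}{2\|\sigma\|_\infty^{2/3}[\vartheta]_1^{2/3}}$; the paper packages this optimization as Lemma \ref{RHO_optim}(b), carrying out the maximization on the $\xi$ side via $f_{\Psi,0}(\xi)=\frac{2^{1/3}\xi^{1/3}}{\Psi\xi+1}$, whereas you substitute $\xi$ back in terms of $\rho$ and maximize $(\rho-1)^{1/3}/\rho$ directly, which is the same computation. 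Your remainder check that $\frac{\rho^4 q^4\lambda_n^4}{\Gamma_n^3}e_n\asymp a^{4/3}\Gamma_n^{1/3}e_n=o(|P_{\min}|)$ is exactly the content of Remark \ref{REM_RESTE_LAMBDA_GAMMA_N}.
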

\begin{remark}
Observe from Corollary \ref{COROL_BOOTSTRAP_CC}, that, the function $\vartheta $ enjoys the required smoothness as soon as assumption \A{P${}_\beta$}  introduced in Section \ref{sec_Reg} holds.
\end{remark}

\begin{remark}
For \textit{super Gaussian deviations}, we obtain a sharper bound than in Theorem  \ref{THM_COBORD}. 
Nonetheless, asymptotically, this regime is less sharp than Theorem 2 in \cite{hon:men:pag:16} which provides a Gaussian bound with deteriorated constants (see also  the \textit{User's guide to the proof} in Section \ref{USER_GUIDE} below).
Even if, from a numerical point of view, the deviation bounds in Theorem \ref{Exact_OPTIM} below yield sharper controls with respect to simulated empirical measures \textcolor{black}{(see Figure \ref{nappe_H16} in the numerical Section below).}

For ``intermediate Gaussian deviations", i.e. for $ \frac{a}{\sqrt{\Gamma_n}} \underset{n}{\rightarrow} C>0$, the constants in the Gaussian bound deteriorate. 
So, it seems reasonable to see this situation like for the first regime $ \frac{a}{\sqrt{\Gamma_n}} \underset{n}{\rightarrow} 0$, namely where there are constants $ C_\infty>1,  c_\infty <1$ such that $ \lim_n  C_n=  C_\infty$, $ \lim_n  c_n=  c_\infty$ and
\begin{equation*}
\P\big[ |\sqrt{\Gamma_n}\nu_n( \mathcal{A} \varphi )| \geq a \big] 
\leq 2\,  C_n \exp \big(\! -  c_n\frac{a^2}{2 \nu(|\sigma^*  \nabla \varphi |^2)}
\big). 
\end{equation*}
Observe that for such regimes, there is an equivalence, up to multiplicative constants, between the bounds in Theorem \ref{THM_CARRE_CHAMPS}
and in Theorem \ref{THM_CARRE_CHAMPS_super_Gaussian}.
\end{remark}

The idea of the proof of Theorem \ref{THM_CARRE_CHAMPS_super_Gaussian} follows the same lines as for Theorem \ref{THM_CARRE_CHAMPS}, except for the optimization over $ \rho$ which is more fussy, see Lemma \ref{RHO_optim} below.

We recall that the analysis \textcolor{black}{ in the proof of Theorem \ref{THM_CARRE_CHAMPS} leaving open a possible optimization over the parameter $\rho$ which we now perform}
The next lemma indicates that this optimization implies a Gaussian regime for $\frac {a}{\sqrt{\Gamma_n}} \underset{n}{\rightarrow} 0$ and a super Gaussian one for $\frac {a}{\sqrt{\Gamma_n}} \underset{n}{\rightarrow} + \infty$. 
\begin{lemme}[Choice of $\rho$ 
for the  concentration regime]\label{RHO_optim}
For 
$P_{\min}(a,\Gamma_n, \rho)$ as in \eqref{P_min_def},
\begin{enumerate}
\item[(a)] If $\frac{a}{\sqrt{\Gamma_n}} \underset{n}{\rightarrow} 0$, taking 
$\rho:=\rho(a,n)$ s.t.
$\rho-1= \frac{1}{2} \frac{\tilde B_n^{1/2} a}{\tilde A_n^{3/2} \sqrt{\Gamma_n}}(1+o(1))$
\begin{equation*}
P_{\min}(a,\Gamma_n, \rho) \underset{\frac{a}{\sqrt{\Gamma_n}} \underset{n}{\rightarrow} 0}{ =} - \frac{a^2}{2 \nu(| \sigma^*  \nabla \varphi |^2)}(1+ o(1 )).
\end{equation*}
\item[(b)] If $\frac{a}{\sqrt{\Gamma_n}} \underset{n}{\rightarrow} + \infty$, taking 
$\rho:=\rho(a,n)$ s.t.
$\rho-1= \frac{1}{2}+o(1)$
\begin{equation*}
P_{\min}(a,\Gamma_n, \rho) \underset{\frac{a}{\sqrt{\Gamma_n}} \rightarrow + \infty} {=} -\frac{a^{4/3} \Gamma_n^{1/3}}{2 \| \sigma \|_{\infty}^{2/3} [\vartheta]_1^{2/3} } (1+o(1)).
\end{equation*}
\end{enumerate}
\end{lemme}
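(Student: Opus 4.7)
The plan is to reuse the parameterization $\rho-1 = \xi\,\Psi$ introduced in the proof of Lemma \ref{RHO}, where $\Psi := \tfrac{27}{8}\,\widetilde B_n a^2/(\widetilde A_n^3 \Gamma_n)$ and $\xi>0$ is a free parameter. The computations carried out there, see in particular \eqref{PREAL_OPTIM_POLY} and \eqref{function}, deliver verbatim the identity
\[
P_{\min}(a,\Gamma_n,\rho) \;=\; -\frac{9}{16\,\widetilde A_n}\, a^2\, f_\Psi(\xi), \qquad f_\Psi(\xi)=\frac{g(\xi)}{\Psi\xi+1},
\]
with $g$ as in \eqref{function}. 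Proving the lemma therefore reduces to \emph{maximising} $f_\Psi$ asymptotically, separately in the two regimes $\Psi\to 0$ and $\Psi\to+\infty$, and to reading off the corresponding asymptotic form of $\rho-1 = \xi\Psi$.

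For part (a), the assumption $a/\sqrt{\Gamma_n}\to 0$ yields $\Psi\to 0$, and I would pick $\xi=\xi(a,n)\to+\infty$ in such a way that $\xi\Psi\to 0$. Imposing $\xi\Psi = \tfrac{1}{2}\,\widetilde B_n^{1/2}a/(\widetilde A_n^{3/2}\sqrt{\Gamma_n})\bigl(1+o(1)\bigr)$ forces $\xi\asymp \widetilde A_n^{3/2}\sqrt{\Gamma_n}/(\widetilde B_n^{1/2} a)$, which indeed tends to $+\infty$ while preserving $\xi\Psi\to 0$. The asymptotic $\xi^{1/3}h(\xi)\to 2/3$ recorded in \eqref{limite} then gives $g(\xi)\to (2/3)(1-1/3)=4/9$, hence $f_\Psi(\xi)\to 4/9$ and $P_{\min}\sim -a^2/(4\widetilde A_n)$. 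Since $2\widetilde A_n\to\nu(|\sigma^*\nabla\varphi|^2)$ by \eqref{def_ABbar}, this concludes part (a).

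For part (b), $a/\sqrt{\Gamma_n}\to +\infty$ gives $\Psi\to +\infty$, and the situation is reversed: I would take $\xi=\xi(a,n)\to 0$. A Taylor expansion at $\xi=0$ of $h(\xi)=(1+\sqrt{1+\xi})^{1/3}+(1-\sqrt{1+\xi})^{1/3}$, treating the second summand as the real cube root of a small negative number, gives $h(\xi)=2^{1/3}-2^{-1/3}\xi^{1/3}+O(\xi)$, so that $\xi^{1/3}h(\xi)=2^{1/3}\xi^{1/3}+O(\xi^{2/3})$ and $g(\xi)=2^{1/3}\xi^{1/3}\bigl(1+o(1)\bigr)$. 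Maximising the leading-order term $2^{1/3}\xi^{1/3}/(\Psi\xi+1)$ in $\xi$ by elementary one-variable calculus produces the maximiser $\xi^\star = 1/(2\Psi)$ with optimum $\tfrac{2}{3}\Psi^{-1/3}$, yielding $\rho-1=\xi^\star\Psi = 1/2+o(1)$ as announced. Plugging back in,
\[
P_{\min}(a,\Gamma_n,\rho) \;\sim\; -\frac{9}{16\,\widetilde A_n}\,a^2\cdot\frac{2}{3\,\Psi^{1/3}} \;=\; -\frac{a^{4/3}\Gamma_n^{1/3}}{4\,\widetilde B_n^{1/3}},
\]
after substituting $\Psi^{1/3}=(3/2)\,\widetilde B_n^{1/3}a^{2/3}/(\widetilde A_n\,\Gamma_n^{1/3})$. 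Using $\widetilde B_n\to \|\sigma\|_\infty^2[\vartheta]_1^2/8$ from \eqref{def_ABbar} (as $q,\hat q,\bar q\to 1$ and $e_n\to 0$) finally converts the right-hand side into $-a^{4/3}\Gamma_n^{1/3}/(2\|\sigma\|_\infty^{2/3}[\vartheta]_1^{2/3})$.

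The only genuine difficulty is a careful book-keeping of the $o(1)$ remainders in the expansions of $h$, $g$ and $f_\Psi$: they must be shown to vanish uniformly in $n$ at the right rate in each of the regimes $a/\sqrt{\Gamma_n}\to 0$ and $a/\sqrt{\Gamma_n}\to +\infty$, so that the leading-order behaviour identified above is really that of $P_{\min}$. This is however pure one-variable asymptotic analysis and involves no further probabilistic input beyond what has already been assembled in the preceding sections.
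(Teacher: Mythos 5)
Your proposal is correct and follows essentially the same route as the paper: the identical parameterization $\rho-1=\xi\Psi$, the reduction to maximizing $f_\Psi(\xi)=g(\xi)/(\Psi\xi+1)$, the large-$\xi$ asymptotic $g(\xi)\to 4/9$ for part (a), and the small-$\xi$ expansion $g(\xi)\sim 2^{1/3}\xi^{1/3}$ with optimizer $\xi^\star=1/(2\Psi)$ for part (b). The only presentational difference is that you verify the stated choice of $\rho$ directly, whereas the paper first derives it from the optimality condition $f_\Psi'(\xi_*)=0\iff f_\Psi(\xi_*)=g'(\xi_*)/\Psi$ (using $g'(\xi)\sim 8/(3^5\xi^2)$) and, for part (b), argues explicitly that the Gaussian regime cannot persist before passing to the small-$\xi$ regime; these derivational steps motivate but are not logically needed for the lemma as stated.
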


\begin{remark}
For \textit{Gaussian deviations}, $\rho-1= \frac{1}{2} \frac{\tilde B_n^{1/2} a}{\tilde A_n^{3/2} \sqrt{\Gamma_n}}(1+o(1)) \asymp \frac{a}{ \sqrt{\Gamma_n}}$ which corresponds to our choice in Lemma \ref{RHO}. We then retrieve the Gaussian regime.
In the \textit{super Gaussian deviations} framework, the optimization over $\rho$ leads to consider $\rho-1= \frac{1}{2}+o(1)$ which yields the loss in the concentration inequality.
\end{remark}

Let us first continue with the proof of Lemma \ref{RHO_optim} which is purely analytical and rather independent of our probabilistic setting. 

\begin{proof}[Proof of Lemma \ref{RHO_optim} ]
We keep the notations of Lemma \ref{RHO}, that we bring to mind.
\begin{eqnarray*}
\Phi_n(a,\rho)&=& 
\Big( \frac{a}{\sqrt{\Gamma_n}\tilde B_n }+\big(\frac{a^2}{\tilde B_n^2\Gamma_n}+ (\rho-1)\big(\frac{2 \tilde A_n}{3\tilde B_n}\big)^3\big)^{\frac 12}\Big)^{\frac 13}+\Big( \frac{a}{\sqrt{\Gamma_n}\tilde B_n }-\big(\frac{a^2}{\tilde B_n^2\Gamma_n}+ (\rho-1)\big(\frac{2 \tilde A_n}{3\tilde B_n}\big)^3\big)^{\frac 12}\Big)^{\frac 13}
\nonumber \\
&=&  \frac{a^{1/3}}{\tilde B_n^{1/3} \Gamma_n^{1/6}} \big ( (1+ \sqrt{1+ \xi})^{1/3} + (1 - \sqrt{1+\xi})^{1/3} \big),
\end{eqnarray*}
for
\begin{equation}\label{CHOIX_RHO_bis}
\rho-1 := \xi \frac{27}{8} \frac{\tilde B_n a^2}{\tilde A_n^3 \Gamma_n},
\end{equation}
where $\xi:=\xi(a,n)>0$ is a parameter that we are going to optimize.
Furthermore:
\begin{equation}
P (\lambda_n )= - \frac{3^2}{2^4 \tilde A_n} a^2 f_{\Psi}(\xi),\label{PREAL_OPTIM_POLY}
\end{equation}
for 
\begin{equation}\label{function}
f_\Psi : \xi \in \R_+\longmapsto \frac{g(\xi)}{\Psi \xi+1},
\end{equation}
where 
\begin{equation*}
g : \xi \longmapsto \xi^{1/3}\big( (1+ \sqrt{1+\xi})^{1/3}+(1-\sqrt{1+\xi})^{1/3} \big )
\Big  (1- \frac{\xi^{1/3}}{2} \big ((1+ \sqrt{1+\xi})^{1/3}+(1-\sqrt{1+\xi})^{1/3} \big ) \Big  ),
\end{equation*}
and
\begin{equation}\label{Psi_def}
\Psi:=\frac{27}{8} \frac{\tilde B_n a^2}{\tilde A_n^3 \Gamma_n} \big (\overset{\eqref{CHOIX_RHO}} =\frac{\rho-1}{\xi} \big ).
\end{equation}
$\bullet$ Let us first focus on case \textit{(a)}, ``Gaussian deviations'' ($\frac{a}{\sqrt{\Gamma_n}} \underset{n}{\rightarrow} 0 $).
We, now anyhow, want to maximize $\Lambda $ in $\xi $  to obtain the best possible concentration bound. 
Let ${\mathscr A}:=\{ \xi \in [0,+\infty]: f_\Psi(\xi)=\|f_\Psi\|_{\infty}\} $ be the set of points where $f_\Psi$ reaches its maximum. 
Observe that for a fixed $\Psi $, $f_{\Psi}(\xi)\underset{\xi\to \infty}{\rightarrow}0$. Thus, $+\infty\not\in {\mathscr A} $.
Let now $\xi_* $ be an arbitrary point in ${\mathscr A} $.  
From the smoothness of $f_\Psi $, the optimality condition writes:
\begin{equation}\label{sup_trick}
f_\Psi'(\xi_*)=\frac{g'(\xi_*)}{(\Psi \xi_*+1)} - \frac{\Psi g(\xi_*)}{(\Psi \xi_*+1)^2} = 0 \ \Leftrightarrow
\frac{g'(\xi_*)}{(\Psi \xi_*+1)}  = \frac{\Psi f_\Psi (\xi_*)}{(\Psi \xi_*+1)}
 \ \Leftrightarrow 
f_\Psi(\xi_*)=\frac{g'(\xi_*)}{\Psi}.
\end{equation}
Recall now that we want to maximize over the $\xi $ s.t. $\xi \to +\infty,\ \xi\Psi \underset{\frac a{\sqrt \Gamma_n}\to 0}{\rightarrow} 0$. 
\textcolor{black}{Indeed, from the proof of Lemma \ref{RHO}, we saw that for such a choice, we obtain the expected Gaussian concentration, namely 
$P(\lambda_n) \underset{\frac{a}{\sqrt{\Gamma_n}} \underset{n}{\rightarrow} 0} {\sim }- \frac{a^2}{4\tilde A_n} $. }

From the computations of Lemma \ref{asymptotic_Dg} in Appendix \ref{asymptotic_analysis}, we have:
\begin{equation}\label{sup_equi}
g'(\xi) \underset{\xi\rightarrow + \infty}{=} 
 \frac{8}{3^5 \xi^2} + o(\frac {1}{\xi^2}) .
\end{equation}
 So, 
 \begin{equation}\label{LIMITE_CHOICE}
  \Lambda(\xi_*) \underset{\xi_* \to + \infty}=\frac{1}{2\cdot 3^3 \xi_*^{2} \Psi}(1+ o(1))\underset{\frac a{\sqrt \Gamma_n}\to 0}{\rightarrow}\frac{1}{4}  ,
  \end{equation}
 where $o(1)$ denotes here a quantity going to 0 as $\xi_*\rightarrow +\infty $ and $\xi_*\Psi\underset{\frac a{\sqrt \Gamma_n}\to 0}{\rightarrow} 0 $. 
 Inspired by the identity \eqref{LIMITE_CHOICE}, and motivated the numerical simulations (see Section \ref{SEC_NUM}),  we set
\begin{equation}\label{c_rho}
\bar \xi_* := \frac{2^{1/2}}{3^{3/2} \sqrt{\Psi}}= \frac{2^{2} \tilde A_n^{3/2} \sqrt{\Gamma_n}}{3^{9} \tilde B_n^{1/2} a},
\end{equation}
which indeed satisfies \eqref{COND}, so that 
$$\min_{\rho>1}P(\lambda_n)\le -\frac{a^2}{\tilde A_n}\Lambda(\bar \xi_*)(1+o(1))=\frac{a^2}{4\tilde A_n}(1+o(1))\underset{n \rightarrow + \infty}{\sim} -\frac{ a^2}{2 \nu(|\sigma^*  \nabla \varphi |^2)}.$$
Observe as well that this choice yields:
\begin{equation}
\rho-1=  \frac{2^{1/2} \Psi}{3^{3/2} \sqrt{\Psi}} =  \frac{1}{2} \frac{\tilde B_n^{1/2} a}{\tilde A_n^{3/2} \sqrt{\Gamma_n}}.
\end{equation}
$\bullet$ Now we will study the the case \textit{(b)} ``super Gaussian deviations'' for which $\frac{a}{\sqrt{\Gamma_n}} \underset{n}{\rightarrow} + \infty$.

Note that we cannot expect a Gaussian regime in this case. 
In fact, for $\Psi$ going to infinity (see definition \eqref{Psi_def}), by \eqref{sup_trick},  to get a Gaussian regime at a maximizer $\xi_* $ of $\Lambda $, we have from \eqref{PREAL_OPTIM_POLY} that $f_\Psi(\xi_*)=\frac{g'(\xi_*)}\Psi $ has to remain separated from 0 when $\frac{a}{\sqrt{\Gamma_n}} \to +\infty $. 
Since, in this case $\Psi \underset n \rightarrow + \infty$, this imposes to consider points $\xi \rightarrow 0$ in order to exploit the asymptotic behaviour (see again Lemma \ref{asymptotic_Dg} for more details): 
\begin{equation}\label{GP_PETIT}
g'(\xi) \underset{\xi \to 0}{=}
\frac{2^{1/3}}{3\xi^{\frac 23 }}\big(1+
 o(1)\big)
.
\end{equation}

 In other words, from \eqref{GP_PETIT}, we expect that there is a constant $K_*>0$ s.t.
\begin{equation*}
f_\Psi(\xi_*) \underset{\xi^* \to 0}{\sim} \frac{2^{1/3}}{3\xi_*^{2/3}\Psi} \ge  K_*.
\end{equation*}
So $\xi_* \le  \frac{C}{\Psi^{3/2}}\underset{\Psi\to +\infty}{\rightarrow}0$. Now, $|f_\Psi(\xi_*)|=|\frac{g(\xi_*)}{\Psi\xi+1}|\le |g(\xi_*)|\underset{\xi_*\to 0}{\rightarrow }0 $. 
This means that it is impossible to stay in a Gaussian regime. 

We now still look at the optimal $\xi_* \rightarrow 0$ which allows to stay at ``the biggest possible regime".
Thenceforth, we will estimate $\xi_*$ directly from the map $f_\Psi$ defined in \eqref{function}:
\begin{equation}\label{A_FAIRE}
f_\Psi(\xi) \underset{\xi \rightarrow 0}{=} \frac{2^{1/3}\xi^{1/3}}{ \Psi \xi +1} (1+o(1)) =: f_{\Psi,0}(\xi)(1+o(1)).
\end{equation}
It can be directly checked that $\argmax_{\xi \in \R_+} f_{\Psi,0}(\xi) = \frac{1}{2\Psi}$. We therefore get:
\begin{equation}\label{optim_inf_THEONE}
\xi_* \underset{\Psi \to + \infty}{=} \frac{1}{2\Psi} + o(\frac 1 \Psi)
\underset{\frac a{\sqrt{\Gamma_n}} \to + \infty}{=} \frac{4 \tilde A_n^3 \Gamma_n}{27 \tilde B_n a^2} + o(\frac {\Gamma_n}{a^2}) \to 0.
\end{equation}
From \eqref{PREAL_OPTIM_POLY} and \eqref{A_FAIRE}, we get:
\begin{eqnarray*}
\min_{\rho>1}P(\lambda_n) &\underset{\frac{a}{\sqrt{\Gamma_n}} \underset{n}{\rightarrow} + \infty} {\le } &
-\frac{3^2}{2^4 \tilde A_n} a^2 \frac{2^{1/3}\xi_*^{1/3}}{\Psi\xi_*+1}(1+o(1))
\underset{\eqref{optim_inf_THEONE}}{=}
- \frac{3^2}{2^4} \frac{a^2}{\tilde A_n ( \frac 12 +1) }\Psi^{-1/3}(1+o(1)) \\
&\underset{\eqref{COND}} \le & -\frac{3^2}{2^4} \frac{a^2}{\tilde A_n ( \frac 12 +1) } ( \frac{2^3 \tilde A_n^{3} \Gamma_n}{3^3 \tilde B_n a^2})^{1/3}(1+o(1))
\\
 &\underset{\frac{a}{\sqrt{\Gamma_n}} \underset{n}{\rightarrow} + \infty} {=} &
 -\frac{a^{4/3} \Gamma_n^{1/3}}{2^2 \tilde B_n ^{1/3}}(1 +o(1))
 \underset{\frac{a}{\sqrt{\Gamma_n}} \underset{n}{\rightarrow} + \infty} {\overset{\eqref{def_ABbar}}=} -\frac{a^{4/3} \Gamma_n^{1/3}}{2\| \sigma \|_{\infty}^{2/3} [ \vartheta ]_1^{2/3}}(1+o(1)).
\end{eqnarray*}
From equations \eqref{CHOIX_RHO} and \eqref{Psi_def}, the choice \eqref{optim_inf_THEONE} yields: 
\begin{equation}\label{optim_inf}
\rho-1= \xi \Psi=\frac 12 +o(1).
\end{equation}
\end{proof}
\begin{remark}[Controls of the optimized parameters $\lambda$ and $\rho$ for \textit{super Gaussian deviations}]\label{REM_RESTE_LAMBDA_GAMMA_N}
We give here some useful estimates to control the remainder terms in the truncation procedure associated with unbounded innovations, see proof of Lemma \ref{rn_choice}. 
They specify the behaviour of the quantity $\frac{\rho \lambda_n}{\Gamma_n} $.

In the regime of \textit{super Gaussian deviations}, identities \eqref{RLNG}, \eqref{Psi_def} and the choice \eqref{optim_inf_THEONE} (i.e. $\xi_* = \frac 1 {2 \Psi}$) yields:
\begin{eqnarray*}
&&\frac{\rho^2 \lambda_n^2}{\Gamma_n^2} \asymp
C \Psi
\underbrace{ \xi_*^{2/3} \big ( (1+ \sqrt{1+ \xi_*})^{1/3} + (1 - \sqrt{1+\xi_*})^{1/3} \big)^2}_{\asymp \frac{2^{1/3}}{\Psi^{2/3}}}
\underset{\frac{a}{\sqrt{\Gamma_n}} \rightarrow + \infty} {\rightarrow} +\infty.
\end{eqnarray*}

\end{remark}

\begin{remark}\label{RK_interet_optim_rho_Gauss_superGauss}
Theorems \ref{THM_CARRE_CHAMPS} and \ref{THM_CARRE_CHAMPS_super_Gaussian} are actually a consequence of the more general following result which has a real  importance for numerical  applications. 
Indeed, for a given $n \in \N$, we have to control the non-asymptotic error in Theorems \ref{THM_CARRE_CHAMPS} and \ref{THM_CARRE_CHAMPS_super_Gaussian} . Furthermore, in Section \ref{SEC_NUM} (see Remark  \ref{super_Gaussian_numerical}) we will see that for $\theta \in (\frac 13,1]$ ,  for ``reasonable" $n$ (e.g. $n=5 \cdot 10^4$ in the following Section \ref{SEC_NUM}) and $a$ ($ \approx 1$) we are  already ``out" of the \textit{Gaussian deviations} regime, namely $\frac a{\sqrt{\Gamma_n}} \asymp1$. 
This illustration justifies the interest of optimizing over $\rho$ for  \textit{Gaussian deviations}  and  \textit{super Gaussian deviations}.
\end{remark}

\begin{theo}\label{Exact_OPTIM}
Let the assumptions of Theorem \ref{THM_CARRE_CHAMPS} be in force.
For $ \theta \in (\frac{1}{3}, 1 ]$
, there exist explicit non-negative sequences 
$( c_n)_{n\ge 1}$ and $( C_n)_{n\ge 1}$, respectively increasing and decreasing for $n$ large enough, with $\lim_n  C_n = \lim_n  c_n =1$ 
s.t. for all $n\ge 1$ for all $a>0$,
\begin{equation*}
\P\big[ |\sqrt{\Gamma_n}\nu_n( \mathcal{A} \varphi )| \geq a \big] 
\leq 2\,  C_n \exp \big(\!  c_nP_{\min}(a,\Gamma_n, \rho )
\big),
\end{equation*}
where  $\rho>1 $ and 
\begin{equation*}
P_{\min}\big(a, \Gamma_n,\rho \big)= - \frac{(\rho-1)^{1/3}}{\rho} \frac{\sqrt{\Gamma_n} \Phi_n(a,\rho)}{8} ( 3a -  \sqrt{\Gamma_n}  (\rho-1)^{1/3} \tilde A_n \Phi_n(a,\rho)),
\end{equation*}
with 
\begin{equation}
\quad\Phi_n(a,\rho):=
\Big( \frac{a}{\sqrt{\Gamma_n}\tilde B_n }+\big(\frac{a^2}{\tilde B_n^2\Gamma_n}+ (\rho-1)\big(\frac{2 \tilde A_n}{3\tilde B_n}\big)^3\big)^{\frac 12}\Big)^{\frac 13}+\Big( \frac{a}{\sqrt{\Gamma_n}\tilde B_n }-\big(\frac{a^2}{\tilde B_n^2\Gamma_n}+ (\rho-1)\big(\frac{2 \tilde A_n}{3\tilde B_n}\big)^3\big)^{\frac 12}\Big)^{\frac 13}
\end{equation}
\begin{equation}\label{eq:ABtilde}
\widetilde A_n:= \frac{ q\nu(| \sigma^*  \nabla \varphi |^2)}{2}+e_n
\quad\mbox{ and }\quad 
\widetilde B_n = \frac{q^3 \hat q}{4}\Big( \frac{\bar q\|\sigma \|_{\infty}^2 \| \nabla \vartheta\|_{\infty}^2}{2}
+ e_n\Big),
\end{equation}
where $ \ q:=q(n)>1,\ \bar q:=\bar q(n)>1$\textcolor{black}{, $\hat{q}:=\hat{q}(n)$} with 
\textcolor{black}{$q,\bar q\hat{q} \underset n\to 1$} and $e_n$ is an explicit sequence going to $0$.

The results of Theorem \ref{THM_CARRE_CHAMPS} explicitly follow taking (see for more details the proof of Lemma \ref{RHO_optim}):
\begin{eqnarray*}
\rho-1&=& \frac{1}{2} \frac{\tilde B_n^{1/2} a}{\tilde A_n^{3/2} \sqrt{\Gamma_n}}
, \ \text{for} \ \frac{a}{\sqrt{\Gamma_n}} \underset{n}{\rightarrow} 0,
\\
\rho-1&=& \frac{1}{2 }, \ \text{for} \ \frac{a}{\sqrt{\Gamma_n}} \rightarrow+ \infty.
\end{eqnarray*}
\end{theo}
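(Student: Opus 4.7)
The plan is to assemble, in the order laid out in the \emph{User's guide to the proof}, the machinery already developed above, and then to perform the polynomial optimization with $\rho>1$ kept as a free parameter rather than asymptotically fixed. First, I would apply the exponential Markov inequality in the form of \eqref{Tchebytchev_holder}, and for free parameters $\lambda>0$, $q>1$ split the contribution of the martingale term $M_n$ from the remainders $L_n$, $\bar G_n$, $D_{2,b,n}$, $D_{2,\Sigma,n}$ (which is exactly the decomposition \eqref{DECOUP_lemme_HMP}). Invoking Lemma~\ref{HMP_lemmas} yields
\[
\P\big[\sqrt{\Gamma_n}\nu_n(\mathcal{A}\varphi)\geq a\big]\leq \exp\Big(-\frac{a\lambda}{\sqrt{\Gamma_n}}\Big)\Big(\E\exp\big(-\tfrac{q\lambda}{\Gamma_n}M_n\big)\Big)^{1/q}\exp\Big(\frac{\lambda^2}{\Gamma_n}e_n\Big)\mathscr R_n,
\]
so the analysis reduces to controlling the exponential moment of $M_n$.

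Next, I would invoke Lemma~\ref{Mn}, whose derivation is the core new ingredient: one further splits via Hölder with an additional free parameter $\rho>1$ (identity \eqref{Mn_ineq_T1T2}), then uses the auxiliary Poisson equation $\mathcal A\vartheta=|\sigma^*\nabla\varphi|^2-\nu(|\sigma^*\nabla\varphi|^2)$ to exhibit the \textit{carré du champ} in $\mathscr T_1$ (see \eqref{T1_user}), and controls the super-martingale default of $S_n$ (Lemma~\ref{CTR_T1_N}, Lemma~\ref{rn_choice}) in the unbounded-innovation setting via the truncation level $r_{k,n}$ chosen in \eqref{rn}. The term $\mathscr T_2$ is handled by the older Gaussian technique of \cite{hon:men:pag:16} (yielding the $\lambda^4$ contribution with constant $\|\sigma\|_\infty^2\|\nabla\vartheta\|_\infty^2$), and $\mathscr T_3$ is absorbed into a remainder via Young's inequality and the Lyapunov integrability bound \eqref{expV_int}. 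Combining, one arrives at
\[
\P\big[\sqrt{\Gamma_n}\nu_n(\mathcal{A}\varphi)\geq a\big]\leq C_n\exp\big(P(\lambda)\big),\qquad P(\lambda)=-\frac{a\lambda}{\sqrt{\Gamma_n}}+\frac{\lambda^2}{\Gamma_n}A_n(\rho)+\frac{\lambda^4}{\Gamma_n^3}B_n(\rho),
\]
with $A_n(\rho)=\rho\widetilde A_n$, $B_n(\rho)=\frac{\rho^3}{\rho-1}\widetilde B_n$ and $\widetilde A_n$, $\widetilde B_n$ as in \eqref{eq:ABtilde}.

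The remaining step is the minimization of $P$ in $\lambda>0$, which is carried out exactly as in Section~4 of \cite{hon:men:pag:16}: the critical equation $P'(\lambda)=0$ is a depressed cubic in $\lambda$, whose unique positive real root is given by the Cardan--Tartaglia formula, producing $\lambda_n=\tfrac{\Gamma_n}{2}\tfrac{(\rho-1)^{1/3}}{\rho}\Phi_n(a,\rho)$ as in \eqref{lambda}. Substituting $\lambda_n$ back into $P$, and using the Newton binomial identity
\[
\Phi_n(a,\rho)^3=\frac{2a}{\sqrt{\Gamma_n}\widetilde B_n}-\frac{2(\rho-1)^{1/3}\widetilde A_n}{\widetilde B_n}\Phi_n(a,\rho),
\]
collapses $P(\lambda_n)$ to the closed form $P_{\min}(a,\Gamma_n,\rho)$ stated in the theorem. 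Keeping track of the remainder sequences $\mathscr R_n\to 1$ and $e_n\to 0$ and absorbing them into the constants $C_n\downarrow 1$ and $c_n\uparrow 1$ (exactly as in \eqref{ineq_PMIN}) gives the general bound for arbitrary $\rho>1$. The two specializations for Gaussian and super Gaussian deviations follow by plugging the choices of $\rho$ dictated by Lemma~\ref{RHO_optim} (respectively $\rho-1\asymp \widetilde B_n^{1/2}a/(\widetilde A_n^{3/2}\sqrt{\Gamma_n})$ and $\rho-1\to 1/2$), and reading off the resulting asymptotics of $P_{\min}$.

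The main obstacle is twofold. On the one hand, keeping $\rho$ as a free parameter throughout forces one to carry the remainder sequences $e_n$, $\mathscr R_n$ uniformly in $\rho$; this is delicate because the truncation radius $r_{k,n}$ in \eqref{rn} itself depends on $\lambda$ (hence on $\rho$), and the control of $\aleph_{k,n}$ in Lemma~\ref{rn_choice} must remain $o(1)$ in both asymptotic regimes $\lambda_n/\Gamma_n\to 0$ and $\lambda_n/\Gamma_n\to\infty$ (cf.\ Remarks~\ref{REM_RESTE_LAMBDA_GAMMA_N_asymp} and~\ref{REM_RESTE_LAMBDA_GAMMA_N}). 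On the other hand, identifying the correct asymptotic behaviour of $\xi_*$ (and therefore of $\rho-1$) in the super Gaussian regime requires the fine Taylor expansions of $g'$ near $0$ and near $+\infty$ recorded in Appendix~\ref{asymptotic_analysis}; the fact that $f_\Psi(\xi_*)\to 0$ there is precisely what forces the loss of the Gaussian rate.
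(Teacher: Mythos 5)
Your plan is correct and follows the same chain of lemmas the paper itself uses: Lemma~\ref{HMP_lemmas} to isolate the martingale $M_n$, Lemma~\ref{Mn} (built on the Hölder split \eqref{Mn_ineq_T1T2}, the Poisson equation for the \textit{carré du champ}, and Lemmas~\ref{CTR_T1_N}, \ref{rn_choice}) to obtain the quartic polynomial $P(\lambda)$ in \eqref{ineq_P}, the Cardan--Tartaglia minimization yielding \eqref{lambda} and \eqref{P_min_def}, and finally the choices of $\rho$ from Lemmas~\ref{RHO} and~\ref{RHO_optim}. The obstacles you flag — uniformity of the remainders in $\rho$ through the $\lambda$-dependent truncation radius, and the Taylor expansions of $g'$ governing the super-Gaussian $\xi_*$ — are exactly the points the paper addresses in Remarks~\ref{REM_RESTE_LAMBDA_GAMMA_N_asymp}, \ref{REM_RESTE_LAMBDA_GAMMA_N} and Appendix~\ref{asymptotic_analysis}.
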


\textcolor{black}{
\begin{remark}\label{remark_bootstrap}
It is natural to wonder if it is possible to get a sharper variance for \textit{super Gaussian deviations}.
Thereby it would be  tempting to bootstrap Lemma \ref{Mn}.
Such an iteration would lead to optimize polynomials of higher degrees.
Recall from Lemma \ref{RHO}, that we already have to handle a polynomial of order $4$ in the current setting.
This illustrates that for very large deviations the highest term dominates and deteriorates the concentration.
This is intrinsic to our approach.
Such a phenomenon would even more pregnant when iterating the procedure the polynomial of higher degree yields, for a certain very large deviation regime, concentration bounds that become closer and closer to the exponential.
However, bootstrapping might allow \textcolor{black}{to improve the constants in the successive deteriorated concentration regimes.
As indicated in  Remark \ref{RK_interet_optim_rho_Gauss_superGauss}, this could be useful for some numerical purposes.}
\end{remark}
}

\section{Numerical Results}\label{SEC_NUM}
\setcounter{equation}{0}
\subsection{Degenerate diffusion}

Here, we have chosen to highlight the possible absence of non-degene\-ra\-cy assumption for our results. 
\textcolor{black}{To oversimplify, simulations are done with $r=d=1$, $X_0$ and $U_1$ follow the standard normal distribution. }
Naturally, for a better convergence speed, we take $\theta \approx \frac 13$, precisely $\theta=\frac13 + \frac1{1000}$.

For this first example, we choose $\varphi$ (solution of the Poisson equation).
We take $\varphi=\sigma =\cos$ and for all $x \in \R$, $b(x)= -\frac x2$.
By this pick, we compute numerically $\nu(|\sigma^* \nabla \varphi|^2) \approx 0.1515$ and $\nu(|\sigma|^2) \approx 0.4171$ \textcolor{black}{ (that we provide here for comparison with the previous results in \cite{hon:men:pag:16})}, with the same parameters ($\theta= \frac 13 + \frac 1{1000}$, $n=5 \cdot 10^4$ and $MC=10^4$).
\\

Heed, for a non trivial test function $\varphi$, with our method, we cannot choose functions $b$ and $\sigma \neq 0$ canceling at the same point (0 here). 
Otherwise, the Poisson equation associated with the \textit{carré du champ} source, ${\mathcal A} \vartheta=|\sigma^*\nabla \varphi|^2- \nu(|\sigma^*\nabla \varphi|^2)$, would imply that $-\nu(|\sigma^* \nabla \varphi |^2)=0$, then $\nabla \varphi=0, \ \nu$
almost surely.
\\

Let us now check that the Confluence Conditions \A{D${}_\alpha^p$} are satisfied.
For $ p\in [1,2)$, we have for all $x\in \R^d $, $\xi\in \R^d $
\begin{eqnarray}
&&\Big \langle \frac {Db(x)+Db(x)^*} 2\xi,\xi \Big\rangle 
+\frac 12 \sum_{j=1}^r \Big( (p-2)\frac{|\langle  D\sigma_{\cdot j}(x) \xi, \xi\rangle|^2}{|\xi|^2}+|D\sigma_{\cdot j} \xi|^2
\Big)
\nonumber \\
&&=
-\frac 12\xi^2 
+\frac 12 \sin^2(x)
\xi^2  ( p-1 ).
\end{eqnarray}
So, for $p=\frac 32$, we directly obtain:
\begin{eqnarray}
&&\Big \langle \frac {Db(x)+Db(x)^*} 2\xi,\xi \Big\rangle 
+\frac 12 \sum_{j=1}^r \Big( (\frac 32-2)\frac{|\langle  D\sigma_{\cdot j}(x) \xi, \xi\rangle|^2}{|\xi|^2}+|D\sigma_{\cdot j} \xi|^2
\Big)
\nonumber \\
&&= -\frac 12 \xi^2
+\frac 14 \sin^2(x)
\xi^2
\leq  -\frac 14 \xi^2
=:-\alpha \xi^2.
\end{eqnarray}
\textcolor{black}{Note that, we have chosen a diffusion coefficient $\sigma$ which degenerates on $\{k \pi, k \in \Z \}$.
However, thanks to the smoothness of the diffusion parameters, we can still here apply Lemma 6 in \cite{hon:men:pag:16} which gives us a pointwise gradient bound of the solution of the Poisson problem in the current degenerate context.
}
In other words:
\begin{equation*}
[\vartheta ]_1 \leq \frac {[|\sigma^* \nabla \varphi|^2]_1}\alpha= 4[\cos^2 \sin^2]_1= 4 \sup_{x \in \R}(\cos(2x) \sin(2x))= 2.
\end{equation*}
Hence, this inequality leads us to approximate $ [\vartheta]_1 $ by $ 2$.
\textcolor{black}{Pay attention that the control of the Lipschitz constant $[\vartheta]_1$ is important for the \textit{super Gaussian deviations}.
Like illustrated in Remarks \ref{RK_interet_optim_rho_Gauss_superGauss} and \ref{super_Gaussian_numerical}, this regime appears ``sooner" than we might expect. 
}
\\

From Theorem~\ref{THM_CARRE_CHAMPS}, the function $a \mapsto g_{n}(a):=\ln(\P[ \sqrt{\Gamma_n} | \nu_n(\mathcal{A}\varphi) | \geq a ])$ is s.t. for $a>0$:
$$g_{n}(a)\le -c_n \frac{a^2}{2 \nu(|\sigma^*\nabla \varphi|^2)}+\ln(2C_n),$$ 
where $( c_n)_{n\ge 1}$ and $( C_n)_{n\ge 1}$ are sequences respectively increasing and decreasing for $n$ large enough, with $\lim_n  C_n = \lim_n c_n =1$.
\\

For  Figure \ref{nappe_H16}, the simulations have been performed for $n=5 \cdot 10^4$ and the probability estimated by Monte Carlo simulation for $MC=10^4$ realizations of the random variable $\sqrt{\Gamma_n} | \nu_n(\mathcal{A}\varphi) | $. 
The corresponding $95\% $ confidence intervals have size at most of order $0.0016$. 
We introduce the functions:
$$S (a):=-\frac{a^2}{2 \nu(|\sigma ^*\nabla \varphi|^2)}, \ S_{\sup} (a):=-\frac{a^2}{2 \|  \sigma \|_{\infty}^2  \|  \nabla \varphi \|_{\infty}^2}. $$
Like in Theorem \ref{Exact_OPTIM}, we take
$$P_{\min}(a,\Gamma_n,\rho)=- \frac{(\rho-1)^{1/3}}{\rho} \frac{\sqrt{\Gamma_n} \Phi_n(a,\rho)}{8} ( 3a - (\rho-1)^{1/3}  \tilde A_n \Phi_n(a,\rho) \sqrt{\Gamma_n} ),$$ where $\tilde A_n, \tilde B_n$ and $\Phi_n(a,\rho) $ are defined in \eqref{eq:ABtilde}.
Through our numerical results, we take $e_n=0$.

We set also: 
 \begin{equation*}
\rho_0 := 1+ \frac{1}{2} \frac{\bar B_n^{1/2} a}{\bar A_n^{3/2} \sqrt{\Gamma_n}} , \ 
\rho_ \infty := \frac{3}{2}.
\end{equation*}
We recall here that $\rho$ and $\rho_\infty$ respectivly correspond to the optimal values of $\rho$ in the \textit{Gaussian deviations} and \textit{super Gaussian deviations} (see Lemma \ref{RHO_optim}).
Eventually, we introduce:
 \begin{eqnarray*}
 P_{n,0,\infty}(a) &:=& \min \Big (  P_{\min}(a,\Gamma_n, \rho_0), P_{\min}(a,\Gamma_n, \rho_\infty) \Big ),
 \\
 P_n(a)&:=& \min_{\rho>1} P_{\min}(a,\Gamma_n,\rho).
\end{eqnarray*}
\textcolor{black}{Note that, the function $P_{n,0, \infty}$ takes into account the multi-regime competition.}
We have estimated $P_n(a)$ by a mesh method for $\rho \in (1,2)$ and for a grid with $5 \cdot 10^5$ steps.
\\

From the above notations, we add the subscript $\sigma$ to mean that we change $\nu(|\sigma^* \nabla \varphi|^2)$ into $\|\nabla \varphi\|_{\infty}^2 \nu(\|\sigma\|^2)$, i.e.
$$S_{\sigma}(a) =- \frac{a^2}{2\|\nabla \varphi\|_{\infty}^2 \nu(\|\sigma\|^2)}, \
 P_{n,\sigma}(a):= \min_{\rho>1} P_{\min,\sigma}(a,\Gamma_n, \rho),$$
and we have changed $\tilde A_n$ into 
\begin{equation*}
\tilde A_{\sigma,n}:= \frac{\|\nabla \varphi\|_{\infty}^2 \nu(\| \sigma\|^2)}{2}.
\end{equation*}
\textcolor{black}{The quantities with subscript $\sigma$ are those associated with the results in \cite{hon:men:pag:16}, recalled in the previous Theorem \ref{THM_COBORD}, where the variance is less sharp than the constants appearing in Theorems \ref{THM_CARRE_CHAMPS}, \ref{THM_CARRE_CHAMPS_super_Gaussian} and \ref{Exact_OPTIM}.
}
Thus, we can compare our main results with Remark 10 of \cite{hon:men:pag:16} which is a weakened form of Theorem \ref{Exact_OPTIM} where the \textit{carré du champ} is changed into $\|\nabla \varphi\|_{\infty}^2 \nu(\|\sigma\|^2)$ like in Theorem \ref{THM_COBORD}.

\begin{figure}[!h]
\centering
\begin{minipage}[b]{.46\linewidth}
\centerline{\includegraphics[scale=1.25]{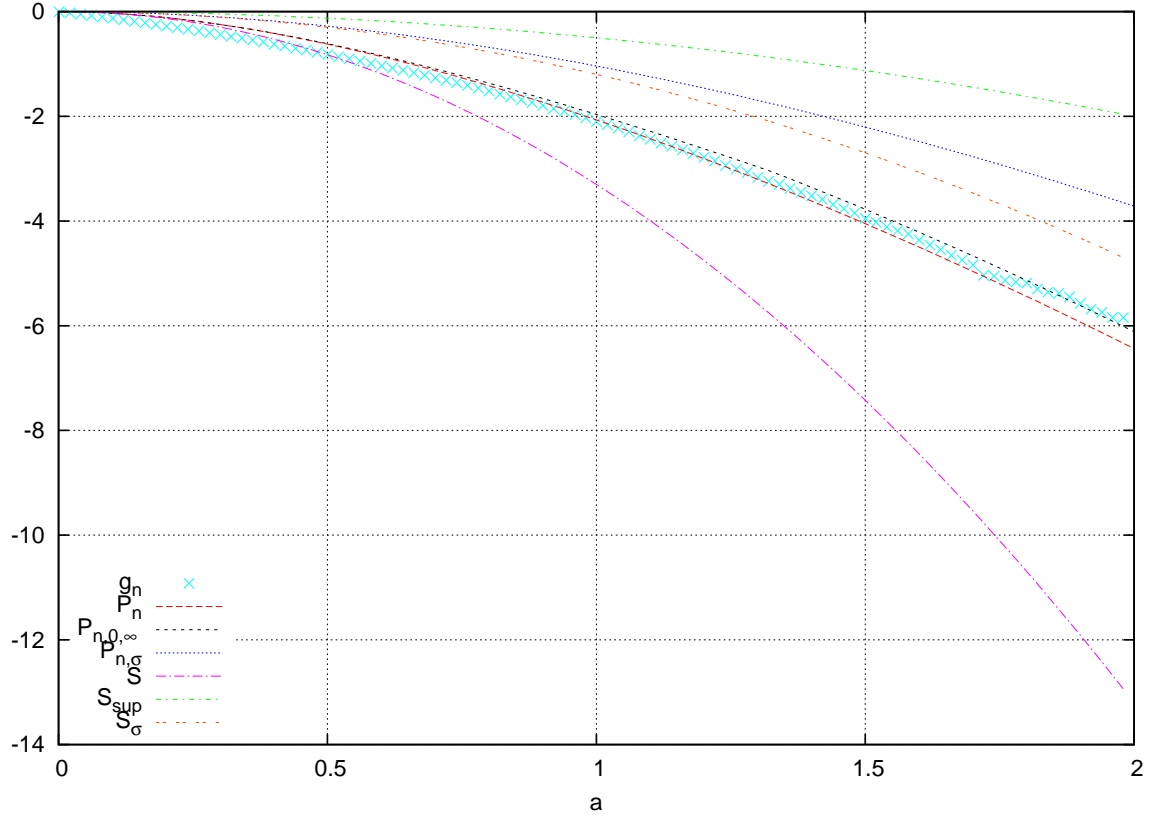}}
\caption{Plot of $a \mapsto g_{n}(a)$ with $\varphi(x)\!=\!  \sigma(x) = \cos(x)$.}
\label{nappe_H16}
\end{minipage}
\end{figure}
\vspace{1cm}

Figure \ref{nappe_H16} reveals that the asymptotic curve $S$ is much less sharp with respect to the realizations $g_n$ than our main estimations $P_n$ and $P_{n,0,\infty}$. In fact, these latter are very close to the realization $g_n$.
This claim enhances the significance of controlling finely, non-asymptotically, the deviation of the empirical measure.

In this plot, we can see that our pick of $\rho$ for $P_{n,0,\infty}$, set in Lemma \ref{RHO_optim}, is very close to the numerical optimization of $P_n$ over $\rho$.
Nevertheless, observe that for $a>0.5$, $P_{n,0,\infty}(a)$ and $P_n(a)$ slightly differ. It means that progressively the regime goes from \textit{Gaussian deviations} (i.e. $\frac a{\sqrt{\Gamma_n}} \to 0$) to \textit{intermediate Gaussian deviations} (i.e. $\frac a{\sqrt{\Gamma_n}} = O(1)$). 
Hence, the importance of optimizing globally the function $\rho \mapsto P_{\min}(a,\Gamma_n,\rho)$ (appearing in \eqref{ineq_PMIN}) in all regimes.

\textcolor{black}{\begin{remark}\label{super_Gaussian_numerical}
Remark that for the graphic \ref{nappe_H16}, we chose $n=5 \cdot 10^4$, but for $\theta \approx \frac 13$, $ \sqrt{\Gamma_n} \approx 37$ and for $\theta \approx \frac 1{2+0.5}$, $ \sqrt{\Gamma_n} \approx 26$.
In other words, for $a \approx 1$ we have \textit{intermediate Gaussian deviations} as emphasized by the graphic.
Hence the importance of the study of both regimes, \textit{Gaussian deviations} ($\frac a{\sqrt{\Gamma_n}} \to 0$) and \textit{super Gaussian deviations} ($\frac a{\sqrt{\Gamma_n}} \to +\infty$).
\end{remark}}

\vspace{5cm}

\appendix 

\mysection{Computation of asymptotic analysis}\label{asymptotic_analysis}

In this section, we perform asymptotic analysis for the map $g'$ defined in \eqref{function} in proof of Lemma \ref{RHO}. We recall that for all $\xi \in \R$:

$g(\xi)= \xi^{1/3}\Big( (1+ \sqrt{1+\xi})^{1/3}+(1-\sqrt{1+\xi})^{1/3} \Big )\Big (1- \frac{\xi^{1/3}}{2} \big ( (1+ \sqrt{1+\xi})^{1/3}+(1-\sqrt{1+\xi})^{1/3} \big ) \Big ).$

\begin{lemme}\label{asymptotic_Dg}
\begin{equation*}
g'(\xi) \underset{\xi \to 0}{=}
\frac{2^{1/3}}{3\xi^{\frac 23 }}\big(1
+ o(1)\big)
, \ \ g'(\xi) \underset{\xi\rightarrow + \infty}{=} 
 \frac{8}{3^5 \xi^2} + o(\frac {1}{\xi^2}) .
\end{equation*}
\end{lemme}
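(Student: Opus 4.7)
The strategy is to rewrite $g$ in a form amenable to asymptotic expansion. Introducing the shorthand $h(\xi):=(1+\sqrt{1+\xi})^{1/3}+(1-\sqrt{1+\xi})^{1/3}$ one has
\[ g(\xi)=\xi^{1/3}h(\xi)\Bigl(1-\tfrac{1}{2}\xi^{1/3}h(\xi)\Bigr), \]
so the whole analysis reduces to obtaining enough terms in the expansion of $\xi^{1/3}h(\xi)$ in each regime, substituting into this factorization, and differentiating the resulting truncated series.

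For $\xi\to 0^{+}$ I would expand $\sqrt{1+\xi}=1+\xi/2+O(\xi^{2})$ and then take cube roots, being careful that $(1-\sqrt{1+\xi})^{1/3}=-(\sqrt{1+\xi}-1)^{1/3}$ since $1-\sqrt{1+\xi}<0$. This yields $h(\xi)=2^{1/3}-2^{-1/3}\xi^{1/3}+O(\xi)$ and, after substitution in $g$, the expansion $g(\xi)=2^{1/3}\xi^{1/3}-2^{2/3}\xi^{2/3}+O(\xi)$. Differentiating term by term gives $g'(\xi)=\tfrac{2^{1/3}}{3}\xi^{-2/3}+O(\xi^{-1/3})$, which is exactly the announced asymptotic at $0$.

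For $\xi\to+\infty$ the useful manipulation is the factorization $1\pm\sqrt{1+\xi}=\pm\sqrt{1+\xi}(1\pm\epsilon)$ with $\epsilon:=(1+\xi)^{-1/2}\to 0$, which gives
\[ h(\xi)=(1+\xi)^{1/6}\bigl[(1+\epsilon)^{1/3}-(1-\epsilon)^{1/3}\bigr]=\tfrac{2}{3}(1+\xi)^{-1/3}+\tfrac{10}{81}(1+\xi)^{-4/3}+O(\xi^{-7/3}), \]
using the odd-powered series $(1+\epsilon)^{1/3}-(1-\epsilon)^{1/3}=2\epsilon/3+10\epsilon^{3}/81+O(\epsilon^{5})$. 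Reordering in powers of $\xi^{-1}$ one finds $\xi^{1/3}h(\xi)=2/3-8/(81\xi)+O(\xi^{-2})$. Plugging this into the factorized form of $g$ yields $g(\xi)=4/9-8/(243\xi)+O(\xi^{-2})$; the constant term disappears under differentiation, leaving $g'(\xi)=\tfrac{8}{3^{5}\xi^{2}}+o(\xi^{-2})$.

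The only delicate point in the whole calculation is the bookkeeping in the second regime: the leading $O(1)$ contributions of $\xi^{1/3}h$ and of $(\xi^{1/3}h)^{2}/2$ partially cancel inside $g$, and the announced coefficient $8/3^{5}$ arises from the combination of the subleading correction in $h$ with the cross term in the square, so enough orders must be carried along consistently throughout. Once that cancellation is tracked, everything reduces to standard Taylor expansion of elementary functions.
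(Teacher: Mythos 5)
Your computations are correct, and the final asymptotics match the Lemma, but you proceed in the reverse order from the paper: you first establish an asymptotic (Puiseux) expansion of $g$ itself and then differentiate the expansion term by term, whereas the paper first differentiates (writing $g'=h'(1-h)$ with $h(\xi)=\xi^{1/3}[(1+\sqrt{1+\xi})^{1/3}+(1-\sqrt{1+\xi})^{1/3}]$, computing $h'$ in closed form) and only afterward expands $h$, $h'$ asymptotically. Your route is algebraically cleaner, but it implicitly invokes the fact that one may differentiate the asymptotic expansion term by term, which is not automatic in general: for instance $4/9+\xi^{-1}\sin(\xi^{2})$ has the same first-order expansion at $+\infty$ as your $g$ but a divergent derivative. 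Here the step is legitimate because $g$ is in fact a convergent power series in $\xi^{1/3}$ near $0$ (since $(1-\sqrt{1+\xi})^{1/3}=-\xi^{1/3}\,p(\xi)^{1/3}$ with $p$ analytic and $p(0)=1/2$) and a convergent power series in $(1+\xi)^{-1}$ near $+\infty$ (indeed $\xi^{1/3}h(\xi)=(1-t^2)^{1/3}\phi(t^2)$ with $t=(1+\xi)^{-1/2}$ and $\phi$ analytic, and $(1+t)^{1/3}-(1-t)^{1/3}$ is odd in $t$), so the series can be differentiated legitimately; it would be worth stating this, or else simply differentiating first as the paper does so that the issue never arises. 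Your observation about the partial cancellation between $\xi^{1/3}h$ and $\tfrac12(\xi^{1/3}h)^2$ near $+\infty$ is the same phenomenon the paper handles by carrying the expansion of $h$ to third order.
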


\begin{proof}
Denote $h(\xi) := \xi^{1/3} \left ( (1+ \sqrt{1+ \xi})^{1/3} + (1- \sqrt{1+ \xi})^{1/3} \right)$, so $g(\xi)= h(\xi) (1- \frac{h(\xi)}{2})$.
Differentiating, we get:
\begin{eqnarray*}
h'(\xi)&=&{{(1-\sqrt{1+\xi})^{{{1}\over{3}}}+(1+\sqrt{1+\xi}
)^{{{1}\over{3}}}}\over{3\,\xi^{{{2}\over{3}}}}}
\nonumber \\
&&+\xi^{{{1}\over{3
}}}\,({{1}\over{6\,\sqrt{1+\xi}\,(1+\sqrt{1+\xi})^{{{2
}\over{3}}}}}-{{1}\over{6\,\sqrt{1+\xi}\,(1-\sqrt{1+\xi})^{{{
2}\over{3}}}}})
\\
&=& {{\left(1-\sqrt{1+\xi}\right)^{{{1}\over{3}}}+\left(1+\sqrt{1+\xi}
\right)^{{{1}\over{3}}}}\over{3\,\xi^{{{2}\over{3}}}}}
+ \frac{1}{6 \sqrt{1+\xi}} \frac{ (1- \sqrt{1+\xi})^{2/3} - (1+ \sqrt{1+\xi})^{2/3} }{\xi^{2/3}}.
\end{eqnarray*}

\textit{(a)} For $\xi \to 0$,
\begin{equation*}
h(\xi) \underset{\xi \to 0}{=}
2^{1/3} \xi^{1/3}+ o(\xi^{1/3}),
\end{equation*}
and 
\begin{equation*}
h'(\xi) \underset{\xi \to 0}{=}
\frac{2^{1/3}}{3\xi^{\frac 23 }}
+\xi^{{{1}\over{3 }}}\,( \frac{1}{6 \times 2^{2/3}}
- \frac{2^{2/3}}{\xi^{2/3}}
)+ o(\frac{1}{\xi^{2/3}}) = \frac{2^{1/3}}{3\xi^{\frac 23 }}
+ o(\frac{1}{\xi^{2/3}}),
\end{equation*}
which yields that
\begin{equation*}
g'(\xi)=
h'(\xi) \big (1-h(\xi) \big)
\underset{\xi \to 0}{=}
\frac{2^{1/3}}{3\xi^{\frac 23 }} - 
+ o(\frac{1}{\xi^{2/3}}).
\end{equation*}

\textit{(b)} For $\xi \to + \infty$,

In order to estimate $g'$ we need to do a Taylor expansion up to the third order:
\begin{eqnarray*}
h(\xi) &\underset{\xi \to + \infty}{=}& \xi^{1/3}(1+\xi)^{1/6} \big  ( (1+\frac{1}{\sqrt{1+\xi}})^{1/3} - (1-\frac{1}{\sqrt{1+\xi}})^{1/3} \big)
\\
&=& \xi^{1/3} (1+\xi)^{1/6} \Big (1+ \frac{1}{3\sqrt{1+\xi}} - \frac{1}{3^2(1+ \xi)} + \frac{2 \times 5}{3^3 3! (1+\xi)^{3/2}}
\\
&&-(1- \frac{1}{3\sqrt{1+\xi}} - \frac{1}{3^2(1+ \xi)} - \frac{2 \times 5}{3^3 3! (1+\xi)^{3/2}} ) + o(\frac{1}{\xi^ {3/2}}) \Big )
\\ 
&=& \xi^{1/2} \big(1+\frac{1}{6\xi}+ o(\frac{1}{\xi}) \big ) \big ( \frac{2}{3\sqrt{1+\xi}} + \frac{10}{3^4(1+\xi)^{3/2}} + o(\frac{1}{\xi^{3/2}}) \big )
\\ 
&=& \xi^{1/2} \big(1+\frac{1}{6\xi}+ o(\frac{1}{\xi}) \big ) \big ( \frac{2}{3\sqrt{\xi}} - \frac{1}{3\xi^{3/2}} + \frac{10}{3^4(1+\xi)^{3/2}} + o(\frac{1}{\xi^{3/2}}) \big )
\\ 
&=& \xi^{1/2} \big ( \frac{2}{3\sqrt{\xi}} + \frac{1}{3^2\xi^{3/2}} - \frac{1}{3\xi^{3/2}} + \frac{10}{3^4(1+\xi)^{3/2}} + o(\frac{1}{\xi^{3/2}}) \big )
\\ 
&=& \frac 23 - \frac {8}{81 \xi}+ o(\frac 1 \xi).
\end{eqnarray*}
Differentiating the above expression, we get:
\begin{equation*}
h'(\xi) \underset{\xi \to + \infty}{=} \frac{8}{81 \xi^2} + o(\frac{1}{\xi^2}),
\end{equation*}
which yields that
\begin{eqnarray*}
g'(\xi)&=&h'(\xi) \big (1-h(\xi)\big)
\underset{\xi \to 0}{=}
\big ( \frac{8}{81 \xi^2} + o(\frac{1}{\xi^2})\big) \big( 1- \frac23 + \frac {8}{81 \xi} + o(\frac 1 \xi)\big )
\\
&=& \frac{16}{3^5\xi^2} -\frac{8}{3^5 \xi^2} + o(\frac {1}{\xi^2}) = \frac{8}{3^5 \xi^2} + o(\frac {1}{\xi^2}) .
\end{eqnarray*}
\end{proof}

\section*{Acknowledgments}
The author would like to warmly express his gratitude towards Stéphane MENOZZI 
 for his advice and his support which were determinant for this work.

\bibliographystyle{alpha}
\bibliography{bibli}

\end{document}